\documentclass[11pt,a4paper]{article}
\usepackage[T1]{fontenc}
\usepackage{lmodern}
\usepackage[utf8]{inputenc}
\usepackage[margin=2.4cm]{geometry}
\usepackage{amsmath,amssymb,amsthm}
\usepackage{enumitem}
\usepackage{microtype}
\usepackage[hidelinks]{hyperref}
\hypersetup{pdftitle={Antidirected forests in digraphs},
  pdfauthor={Gengtao Liu and Yunshu Gao}}
\allowdisplaybreaks[2]
\newtheorem{theorem}{Theorem}[section]
\newtheorem{lemma}[theorem]{Lemma}
\newtheorem{proposition}[theorem]{Proposition}
\newtheorem{corollary}[theorem]{Corollary}

\theoremstyle{definition}

\theoremstyle{remark}
\newtheorem{remark}[theorem]{Remark}
\DeclareMathOperator{\ex}{ex}
\newcommand{\dK}{\vec K_2}
\newcommand{\cF}{\mathcal F_k}
\newcommand{\IH}{\textup{(IH)}}

\title{\textbf{Antidirected forests in digraphs}}
\author{Gengtao Liu\quad Yunshu Gao\thanks{Corresponding author: gysh2004@gmail.com}\\
\small School of Mathematics and Statistics, Ningxia University,\\
\small Yinchuan, 750021, China}
\date{}

\begin{document}
\maketitle

\begin{abstract}
A digraph is antidirected if every vertex has indegree zero or outdegree zero.
Let $k\ge2$, and let $F$ be an antidirected forest with $k$ arcs and no isolated vertices.
We prove that every digraph $D$ of order $n$ with more than
\[
 g_k(n):=2\max\left\{\binom{2k-1}{2},\ (k-1)\left(n-\frac{k}{2}\right)\right\}
\]
arcs contains $F$ as a subdigraph. For $n\ge2k-1$, this threshold equals
$2\ex(n,kK_2)$ and is attained by symmetric digraphs arising from extremal
$kK_2$-free graphs. Consequently, the maximum directed extremal number over
all such forests is $2\ex(n,kK_2)$. The proof combines a counting inequality
for rooted antidirected forests, embeddings extending vertex-disjoint arcs,
and vertex deletion. In the remaining case, the Gallai--Edmonds decomposition
of the underlying graph gives the required bound on the number of arcs.
\end{abstract}

\section{Introduction}\label{sec:intro}

A digraph is \emph{antidirected} if every vertex is a source or a sink, that is,
if every vertex has indegree zero or outdegree zero. An \emph{antidirected
forest} is an antidirected orientation of a forest. Antidirected trees and
forests play a distinguished role in extremal digraph theory: as observed
by de Bruijn (see~\cite{Burr82}), no non-antidirected orientation of a tree
can be guaranteed in an oriented graph by a linear lower bound on its
number of arcs. Indeed, every subdigraph of a complete bipartite graph
oriented from one part to the other is antidirected.

The starting point is the Erd\H{o}s--S\'os conjecture~\cite{ErdosSos63}, which
asserts that every graph of order $n$ with more than $(k-1)n/2$ edges
contains every tree with $k$ edges. Brandt~\cite{Brandt94} proved the
corresponding statement for forests without isolated vertices: a graph
with more than
\[
 \max\left\{\binom{2k-1}{2},\ (k-1)\left(n-\frac{k}{2}\right)\right\}
\]
edges contains every forest with $k$ edges and no isolated vertices.
For $n\ge2k-1$, the displayed expression is $\ex(n,kK_2)$; for smaller
orders, the hypothesis is vacuous.

The antidirected analogue of the Erd\H{o}s--S\'os conjecture was proposed
by Addario-Berry, Havet, Linhares Sales, Reed and Thomass\'e~\cite{ABHLRT13}:
every digraph of order $n$ with more than $(k-1)n$ arcs contains every
antidirected tree with $k$ arcs. Results for dense digraphs and under
semidegree assumptions appear in~\cite{ST25,SZ24,Stein23}; the tree
case has recently been settled in~\cite{Deb26,SSW26,RS26}.
For general digraphs, the first bound with a constant linear in $k$ is
due to Burr~\cite{Burr82}: every digraph on $n$ vertices with at least
$4kn$ arcs contains every antidirected tree with $k$ arcs, improving
an earlier bound of Graham~\cite{Graham70}, whose constant is
exponentially large in $k$.
The proposed coefficient $k-1$ is necessary in general: a regular
orientation of $K_{2k-2,2k-2}$ with every indegree and outdegree equal to
$k-1$ has $(k-1)n$ arcs and contains no antidirected star with $k$
arcs; see~\cite{ABHLRT13,Burr82}.

We prove the corresponding extremal theorem for antidirected forests.
Write $\dK$ for the digraph consisting of a single arc, and let $\cF$
be the family of antidirected forests with $k$ arcs and no isolated vertices.
The graph $K_2$ remains undirected; thus $kK_2$ denotes an undirected
matching and $k\dK$ denotes $k$ vertex-disjoint arcs.

\begin{theorem}\label{thm:main}
Let $k\ge2$, let $F\in\cF$, and let $D$ be a digraph of order $n$. If
\[
 m(D)>g_k(n):=2\max\left\{\binom{2k-1}{2},\
 (k-1)\left(n-\frac{k}{2}\right)\right\},
\]
then $D$ contains $F$ as a subdigraph. Moreover, for every $n\ge1$,
\[
 \max_{F\in\cF}\ex_{\rm dir}(n,F)=2\ex(n,kK_2).
\]
\end{theorem}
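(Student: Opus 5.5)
We argue by induction on $k$ and, for fixed $k$, on $n$, following the outline in the abstract. The extremal statement is the easy half. For $n\ge 2k-1$, take an extremal $kK_2$-free graph, either $K_{2k-1}\cup\overline{K}_{n-2k+1}$ or $K_{k-1}+\overline{K}_{n-k+1}$, and replace every edge by both arcs. The resulting digraph has $2\ex(n,kK_2)$ arcs and contains no $k\dK$. Since $k\dK\in\cF$, this gives $\max_F\ex_{\rm dir}(n,F)\ge 2\ex(n,kK_2)$. For $n<2k-1$ the digraph $K_n$ with all arcs has no $k$ disjoint arcs, so the same lower bound holds. The upper bound follows from the first assertion once we note that $g_k(n)=2\ex(n,kK_2)$ for $n\ge 2k-1$. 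For $n<2k-1$ the hypothesis $m(D)>g_k(n)\ge n(n-1)$ is vacuous, and the bound $\ex_{\rm dir}\le n(n-1)=2\binom n2$ is trivial.

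For the main assertion, let $D$ be a counterexample with $n$ minimal. First we do vertex deletion. If some vertex $v$ has total degree $d^+(v)+d^-(v)\le 2(k-1)$, then $m(D-v)>g_k(n)-2(k-1)\ge g_k(n-1)$. This inequality needs checking in both regimes of the max; at the crossover $n\approx 2k-1$ the linear term jumps and the comparison must be verified directly. Minimality then gives a contradiction, so we may assume every vertex has total degree at least $2k-1$.

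Next, write $F=F'\cup\{e\}$, where $e=xy$ is an arc incident with a leaf $y$ of $F$. If $F$ has a $\dK$ component, remove it instead. By induction on $k$, $D$ contains $F'$, because $g_{k-1}(n)<g_k(n)$. We then try to extend the embedding using the rooted counting inequality: count pairs (copy of a rooted antidirected forest, extension arc) and use the minimum total degree $2k-1$ together with the fact that at most $2(k-1)$ arcs at the root are blocked by vertices already used. Here antidirectedness matters, since $x$ is a source or a sink in $F'$ and only out-arcs (respectively in-arcs) are needed. This works unless $D$ has very few vertex-disjoint arcs.

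The remaining, hardest case is when $D$ contains no $k\dK$ and hence no matching of size $k$ in its underlying graph $G$; if it did contain $k\dK$, we would use the extension lemma that embeds $F$ onto a structure built around $k$ disjoint arcs. Here we apply the Gallai--Edmonds decomposition $V(G)=A\cup C\cup R$. The number of arcs is at most twice the number of edges of $G$, counted with multiplicity two per unordered pair, so $m(D)\le 2e(G)$ with $\nu(G)\le k-1$. Brandt's/Erd\H{o}s--Gallai bound then gives $e(G)\le\ex(n,kK_2)$, hence $m(D)\le g_k(n)$, a contradiction. The decomposition is needed to make this bound precise when $D$ has digons on some pairs but not others. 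The main obstacle I expect is the extension step: showing that if $D$ contains $k\dK$ and has minimum total degree at least $2k-1$, then every $F\in\cF$ embeds. I would first try a greedy leaf-by-leaf embedding. When it fails, I would reroute using a disjoint arc, swapping a used vertex for an endpoint of one of the $k$ disjoint arcs.
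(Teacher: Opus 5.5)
\paragraph{There is a genuine gap.}
Your minimum-degree reduction fails exactly in the range where most of the difficulty lies. The inequality $g_k(n)-2(k-1)\ge g_k(n-1)$ goes the wrong way. One always has $g_k(n)-g_k(n-1)\le 2k-2$, with equality only when both values come from the linear branch, i.e.\ $n\ge\lceil 5k/2\rceil$; the crossover is at $5k/2-1$, not near $2k-1$. For $2k\le n\le 5k/2-1$ both values equal $(2k-1)(2k-2)$. So deleting any vertex of positive degree destroys the hypothesis, and a minimal counterexample may have small degrees there. You therefore have no argument for $2k\le n\le\lceil 5k/2\rceil-1$, and this includes the base order $n=2k$.

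The paper treats this range separately. For $n=2k$ it counts injections against the at most $4k-3$ non-arcs of $D$. For $2k<n<\lceil5k/2\rceil$ it uses all of Section~\ref{sec:interval}:
\begin{itemize}
\item it first forces the one-sided bounds $d^\sigma_D(v)\le\zeta$ and $\Delta(D)\ge n+k-2$;
\item it then either deletes a copy of the non-matching part $P$ and finds $\rho$ arcs in the remainder, or grows the components of $P$ greedily from high-degree arcs of a fixed $k$-matching, so that they meet at most $p$ of its arcs;
\item the numerical hypotheses of these steps are verified by nontrivial polynomial inequalities.
\end{itemize}

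\paragraph{The extension step is not an argument, even for large $n$.}
For $n\ge\lceil5k/2\rceil$ the bound $\delta(D)\ge 2k-1$ is legitimate, but total degree $2k-1$ gives no lower bound on $d^+_D$ or $d^-_D$ at the image of the attachment vertex. So a leaf cannot be attached greedily, and ``rerouting along a disjoint arc'' is left unspecified.

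The case you call hardest, $\nu(G)\le k-1$, is in fact immediate from Erd\H{o}s--Gallai, since $m(D)\le 2e(G)$ (this is Corollary~\ref{cor:matching}); no Gallai--Edmonds decomposition is needed there. The real obstacle is that $\nu\ge k$ is not enough, because a copy of $P$ may meet many matching arcs. The missing ingredients are as follows.
\begin{itemize}
\item Write $F=P\sqcup\rho\dK$ and obtain $P$ from the counting inequality (Theorem~\ref{thm:weak}, using $n(P)\le 3p/2$).
\item If some $d^\sigma_D(v)\ge k+c-1$, delete $v$; if $\Delta(D)\le n+k-3$, delete both ends of an arc. In either case, embed $F$ minus a $\dK$ component by induction on $k$, using \eqref{eq:diff2} and \eqref{eq:diff1}.
\item Otherwise $d^\sigma_D\le\zeta$ for all $v,\sigma$, and $n\le k+2c-2$. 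This is where Gallai--Edmonds is needed: together with $\delta(D)\ge2k-1$ it forces $\nu(D)\ge k+c_P$.
\item This is exactly enough: any copy of $P$, having $p+c_P$ vertices, meets at most $p+c_P$ arcs of a maximum matching and leaves $\rho$ disjoint arcs.
\end{itemize}
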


Here $m(D)$ is the number of arcs of $D$, and $\ex_{\rm dir}(n,F)$ is
the maximum number of arcs in an $F$-free digraph of order $n$.
For $n\ge2k-1$, the symmetric digraph obtained from an extremal
$kK_2$-free graph has exactly $g_k(n)$ arcs and contains no $k\dK$.
For $n<2k$, the matching $k\dK$ has more than $n$ vertices, so its
directed extremal number is $n(n-1)=2\ex(n,kK_2)$.
Thus the threshold is sharp for the family $\cF$, although it need not
be sharp for each individual forest.

The proof has three main parts. First, a counting inequality for rooted
forests gives a bound in terms of the number of vertices of the forest.
Together with bounds on indegrees and outdegrees, this settles
$2k+1\le n\le\lceil5k/2-1\rceil$ by constructing vertex-disjoint copies
of the components that have at least two arcs and retaining enough
vertex-disjoint arcs for the other components. Second, when
$n\ge\lceil5k/2\rceil$, a vertex of total degree at most $2k-2$ can be
deleted while preserving the required strict inequality. Finally, in the
remaining case the Gallai--Edmonds decomposition of the underlying graph
either provides a sufficiently large matching or contradicts the lower
bound on $m(D)$.

Section~\ref{sec:prelim} gives the notation, counting lemmas, base cases
and degree reductions. Section~\ref{sec:interval} treats the range
$2k+1\le n\le\lceil5k/2-1\rceil$, and Section~\ref{sec:main-proof}
completes the proof. Auxiliary polynomial inequalities are proved in
Appendix~\ref{app:ineq}.

\section{Notation and preliminaries}\label{sec:prelim}

All digraphs are finite and have neither loops nor parallel arcs with
the same ordered pair of endpoints; opposite arcs are allowed. A pair
of opposite arcs is a \emph{digon}. All undirected graphs are finite
and simple. We use standard graph and digraph terminology
from~\cite{BM08,BJG09}.

\subsection{Notation}

For a digraph $D$, write $V(D)$ and $A(D)$ for its vertex and arc sets,
$n(D)=|V(D)|$, and $m(D)=|A(D)|$. When $D$ is fixed, put $n=n(D)$.
Its underlying simple graph $G(D)$ has an edge $uv$ precisely when at
least one of $uv$ and $vu$ is an arc of $D$. For an undirected graph $G$,
write $e(G)=|E(G)|$ and $N_G(X)=\bigcup_{v\in X}N_G(v)$ for
$X\subseteq V(G)$. For $v\in V(D)$, let
\[
 \begin{aligned}
 N_D^+(v)&=\{w:vw\in A(D)\},& d_D^+(v)&=|N_D^+(v)|,\\
 N_D^-(v)&=\{w:wv\in A(D)\},& d_D^-(v)&=|N_D^-(v)|.
 \end{aligned}
\]
The \emph{total degree} of $v$ is $d_D(v)=d_D^+(v)+d_D^-(v)$.
We write $\delta(D)$ and $\Delta(D)$ for the minimum and maximum total
degrees. A digon contributes two to the total degree at each endpoint.
For $\sigma\in\{+,-\}$, let $\bar\sigma$ be its opposite sign. Then
\[
 w\in N_D^\sigma(v)\quad\Longleftrightarrow\quad
 v\in N_D^{\bar\sigma}(w).
\]
We omit the subscript $D$ only when the digraph is unambiguous.
For $X\subseteq V(D)$, the induced subdigraph on $X$ is $D[X]$, and
$D-X=D[V(D)\setminus X]$. In particular,
\begin{equation}\label{eq:deletion}
 m(D-X)=m(D)-\sum_{v\in X}d_D(v)+m(D[X]).
\end{equation}
The directed complement $\overline D$ has the same vertex set and arc set
$\{uv:u\ne v,\ uv\notin A(D)\}$.

An \emph{embedding} of a digraph $H$ into $D$ is an injective map
$\varphi:V(H)\to V(D)$ preserving every arc. Its image is a \emph{copy}
of $H$. We write $H\subseteq D$ when such a copy exists; this notation
does not require the copy to be induced. A rooted embedding of $(H,r)$
into $(D,v)$ additionally satisfies $\varphi(r)=v$.
For a non-isolated vertex $v$ of an antidirected forest $H$, define
$\sigma_H(v)=+$ when $v$ is a source and $\sigma_H(v)=-$ when $v$ is
a sink. Adjacent vertices have opposite signs. When an isolated root
occurs in a subforest, its sign is inherited from the original forest;
if no larger forest is specified, either sign may be prescribed.

A \emph{matching of arcs} is a subset of $A(D)$ whose members have
pairwise disjoint endpoint sets. Let $\nu(D)$ be its maximum size;
then $\nu(D)=\nu(G(D))$. For a fixed matching of arcs $M$ and
$W\subseteq V(D)$, write
\[
 M(W)=\{uv\in M:\{u,v\}\cap W\ne\varnothing\}.
\]
Thus $M\setminus M(W)$ is a matching in $D-W$, and $|M(W)|\le|W|$.

For $F\in\cF$, write
\begin{equation}\label{eq:forest-decomposition}
 F=P\sqcup\rho\dK,
\end{equation}
where $P$ has no component consisting of a single arc. Put
\[
 \begin{gathered}
 p=m(P)=k-\rho,\qquad c_P=\text{number of components of }P,\qquad
 c=c_P+\rho,\\
 x=k-c=p-c_P,\qquad n_P=n(P)=p+c_P.
 \end{gathered}
\]
Every component of $P$ has at least two arcs, so
$p\ge2c_P$, $c_P\le x$, and $n_P\le3p/2$.
Also $n(F)=k+c$. We shall use
\begin{equation}\label{eq:parameters}
 \begin{gathered}
 t=n-2k,\qquad \varepsilon=p-2c_P\ge0,\qquad
 \zeta=k+c-2=2k-x-2,\\
 c_P=x-\varepsilon,\qquad p=2x-\varepsilon,\qquad
 \rho=k-2x+\varepsilon,\qquad n_P=3x-2\varepsilon.
 \end{gathered}
\end{equation}
The parameter $\varepsilon$ is unrelated to an indegree or outdegree sign.

Finally, write
\begin{equation}\label{eq:branches}
 g_{\rm cl}(k)=(2k-1)(2k-2),\qquad g_{\rm lin}(k,n)=(k-1)(2n-k),
 \qquad g_k(n)=\max\{g_{\rm cl}(k),g_{\rm lin}(k,n)\}.
\end{equation}
For $k\ge2$, the second expression is at least the first precisely when
$n\ge5k/2-1$. We put $g_1(n)=0$. For each fixed positive integer $k$, the maximum in~\eqref{eq:branches}
also defines $g_k(z)$ for real $z$ in purely numerical comparisons.

Some statements below assume that Theorem~\ref{thm:main} holds for all
pairs $(k',n')$ preceding $(k,n)$ in lexicographic order. We denote this
assumption by \IH. The case $k'=1$ is the assertion that a nonempty
digraph contains an arc. Every use of \IH\ is justified by the minimal
counterexample argument in Section~\ref{sec:main-proof}.

\subsection{Counting rooted copies}\label{ss:counting}

Fix a digraph $D$ of order $n\ge1$. Let $\Omega(D)$ be the set of all
linear orderings $\pi=(v_0,\ldots,v_{n-1})$ of $V(D)$, and put
$V_i(\pi)=\{v_0,\ldots,v_i\}$ for $0\le i\le n-1$.
In an indexed list, an interval with lower endpoint larger than its upper
endpoint denotes the empty list. For $\sigma\in\{+,-\}$, define
\[
 \begin{split}
 I_D^\sigma(\pi)&=\{i\in\{1,\ldots,n-1\}:v_i\in N_D^\sigma(v_0)\},\\
 \mathcal M_D^\sigma&=\{(\pi,i):\pi\in\Omega(D),\ i\in I_D^\sigma(\pi)\}.
 \end{split}
\]
For each fixed first vertex, there are $(n-1)!$ orderings. Hence
\begin{equation}\label{eq:marked-count}
 |\mathcal M_D^+|=|\mathcal M_D^-|=m(D)(n-1)! =:\mu_D.
\end{equation}
The two sets need not be equal; only their cardinalities agree.
Also put $\mathcal Z(D)=\Omega(D)\times\{0\}$, so $|\mathcal Z(D)|=n!$.

For a rooted antidirected forest $(H,r)$ having at least one arc, with
root sign $\sigma_H(r)$ prescribed as above, define
\[
 \mathcal R_D(H,r)=
 \{(\pi,i)\in\mathcal M_D^{\sigma_H(r)}:
 (H,r)\hookrightarrow(D[V_i(\pi)],v_0)\},
 \qquad \lambda_D(H,r)=|\mathcal R_D(H,r)|.
\]
Thus $0\le \lambda_D(H,r)\le \mu_D$.
For $1\le i\le n-1$, let
\[
 \tau_i(\pi)=(v_i,v_1,\ldots,v_{i-1},v_0,v_{i+1},\ldots,v_{n-1}).
\]
This transposition preserves $V_i(\pi)$, and
\begin{equation}\label{eq:transposition}
 (\pi,i)\in\mathcal M_D^\sigma
 \quad\Longleftrightarrow\quad
 (\tau_i(\pi),i)\in\mathcal M_D^{\bar\sigma}.
\end{equation}

\begin{lemma}\label{lem:leaf}
Let $F$ be an antidirected forest with at least two arcs and no isolated
vertices. Let $\ell$ be a leaf with neighbour $z$, and put $S=F-\ell$.
If $z$ is non-isolated in $S$, then
\[
 \lambda_D(S,z)\le \lambda_D(F,\ell)+n!.
\]
The two counts use the opposite root signs $\sigma_F(z)$ and
$\sigma_F(\ell)$.
\end{lemma}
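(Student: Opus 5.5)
The plan is to build an explicit map from $\mathcal R_D(S,z)$ to $\mathcal R_D(F,\ell)$ using the transposition $\tau_i$, and to show that it is injective and fails to be defined for at most one pair $(\pi,i)$ per ordering $\pi$. Since there are $n!$ orderings, this gives $\lambda_D(S,z)\le\lambda_D(F,\ell)+n!$. Throughout, $\sigma:=\sigma_F(z)$, so $\sigma_F(\ell)=\bar\sigma$ because $z\ell$ is an arc of the antidirected forest $F$. Since $z$ is non-isolated in $S$, we have $\sigma_S(z)=\sigma$ and $\lambda_D(S,z)$ is defined; since $F$ has at least two arcs, so is $\lambda_D(F,\ell)$.

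\emph{The exceptional pairs.} Fix $\pi=(v_0,\ldots,v_{n-1})\in\Omega(D)$ and let
\[
J(\pi)=\{i\in I_D^{\sigma}(\pi):(S,z)\hookrightarrow(D[V_i(\pi)],v_0)\}.
\]
If $J(\pi)\neq\varnothing$, let $i^*(\pi)=\min J(\pi)$. The pairs $(\pi,i^*(\pi))$ are the discarded ones, and there are at most $n!$ of them. So it remains to map the pairs $(\pi,i)$ with $i\in J(\pi)$ and $i>i^*(\pi)$ injectively into $\mathcal R_D(F,\ell)$.

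\emph{The map.} For such a pair, fix a rooted embedding $\varphi$ of $(S,z)$ into $(D[V_{i^*}(\pi)],v_0)$. This is possible because $i^*\in J(\pi)$. Its image lies in $V_{i^*}(\pi)$, so it avoids $v_i$, since $i>i^*$. As $i\in I_D^\sigma(\pi)$, we have $v_i\in N^\sigma_D(v_0)$, and extending $\varphi$ by $\ell\mapsto v_i$ gives an embedding of $F$ into $D[V_i(\pi)]$ that sends $z\mapsto v_0$ and $\ell\mapsto v_i$. The extension respects the direction of the arc between $z$ and $\ell$, because $\sigma_F(z)=\sigma$ and $v_i\in N_D^\sigma(v_0)$.

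Now pass to $\pi'=\tau_i(\pi)$. This ordering has the same set $V_i(\pi')=V_i(\pi)$, its first vertex is $v_i$, and its $i$-th vertex is $v_0$. The embedding above is therefore a rooted embedding of $(F,\ell)$ into $(D[V_i(\pi')],v'_0)$, where $v'_0=v_i$ is the first vertex of $\pi'$. By \eqref{eq:transposition}, $(\pi',i)\in\mathcal M_D^{\bar\sigma}=\mathcal M_D^{\sigma_F(\ell)}$, and hence $(\pi',i)\in\mathcal R_D(F,\ell)$.

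\emph{Injectivity.} This is immediate because $\tau_i$ is an involution on $\Omega(D)$ for each fixed $i$. So $(\pi,i)$ is recovered from its image $(\tau_i(\pi),i)$. Combining the pieces gives $\lambda_D(S,z)-n!\le\lambda_D(F,\ell)$.

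The only genuine obstacle is the naive attempt of extending an arbitrary embedding of $S$ in $D[V_i]$. That fails when $v_i$ already lies in the image of $S$. Using the minimal index $i^*(\pi)$ avoids this, since an embedding inside $V_{i^*}$ is automatically disjoint from every later $v_i$. This monotonicity, the fact that $V_{i^*}\subseteq V_i$, is the key point to state carefully.
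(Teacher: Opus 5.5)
Your proposal is correct and is essentially the paper's own proof. Like the paper, you discard the least marked index $i^*(\pi)$ for each ordering and extend a rooted copy of $S$ in $D[V_{i^*}(\pi)]$ by $\ell\mapsto v_i$. You then send $(\pi,i)$ to $(\tau_i(\pi),i)$, and this map is injective because $\tau_i$ is an involution.
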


\begin{proof}
For each $\pi$, omit the pair of smallest index in
$\{(\pi,i)\in\mathcal R_D(S,z)\}$, if this set is nonempty.
At most $n!$ pairs are omitted. For any remaining $(\pi,i)$, there is
$j<i$ with $(\pi,j)\in\mathcal R_D(S,z)$. A rooted copy of $S$ in
$D[V_j(\pi)]$ extends to a copy of $F$ by mapping $\ell$ to $v_i$:
this vertex is outside $V_j(\pi)$, and its adjacency to $v_0$ has the
required direction. After applying $\tau_i$, the resulting copy is
rooted at the first vertex with root $\ell$. By~\eqref{eq:transposition},
$(\tau_i(\pi),i)\in\mathcal R_D(F,\ell)$. The map
$(\pi,i)\mapsto(\tau_i(\pi),i)$ is injective, proving the inequality.
\end{proof}

\begin{lemma}\label{lem:isolated-arc}
Let $F$ be an antidirected forest having a component with vertex set
$\{\ell,z\}$ and a single arc. Suppose that $H=F-\{\ell,z\}$ has
at least one arc, and put $S=\{z\}\sqcup H$. With the sign of $z$
inherited from $F$,
\[
 \lambda_D(S,z)\le \lambda_D(F,\ell)+n!.
\]
\end{lemma}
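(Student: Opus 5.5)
The plan is to mirror the proof of Lemma~\ref{lem:leaf}: an injection from most of $\mathcal R_D(S,z)$ into $\mathcal R_D(F,\ell)$, losing at most one pair per ordering.

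First, unpack the meaning of $(\pi,i)\in\mathcal R_D(S,z)$. Here $S=\{z\}\sqcup H$ has root $z$, an isolated vertex whose sign $\sigma=\sigma_F(z)$ is inherited from $F$. The definition requires $i\in I_D^{\sigma}(\pi)$, so $v_i\in N_D^\sigma(v_0)$. It also requires a rooted embedding of $(S,z)$ into $(D[V_i(\pi)],v_0)$. This amounts to $z\mapsto v_0$ together with a copy of $H$ inside $V_i(\pi)\setminus\{v_0\}$. Containment in $D[V_i(\pi)]$ is monotone in $i$, and the sign condition is only a condition on $v_i$. Hence if $(\pi,j)\in\mathcal R_D(S,z)$ and $j<i$ with $i\in I_D^\sigma(\pi)$, then $(\pi,i)\in\mathcal R_D(S,z)$ as well.

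Next comes the omission and the extension. For each $\pi$, discard the pair of smallest index in $\mathcal R_D(S,z)$ with that ordering; this removes at most $n!$ pairs. For a remaining pair $(\pi,i)$, pick $j<i$ with $(\pi,j)\in\mathcal R_D(S,z)$. This gives a copy of $H$ inside $V_j(\pi)\setminus\{v_0\}$, which lies in $V_i(\pi)\setminus\{v_0,v_i\}$ because $v_i\notin V_j(\pi)$. Map $z\mapsto v_0$ and $\ell\mapsto v_i$. Since $v_i\in N_D^{\sigma_F(z)}(v_0)$, the arc between $z$ and $\ell$ has the correct direction: $\sigma_F(z)=+$ means $z\ell$ is an arc of $F$ and $v_0v_i\in A(D)$, and symmetrically for $-$. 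The result is a copy of $F$ in $D[V_i(\pi)]$ with $\ell\mapsto v_i$.

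Finally, apply $\tau_i$, which swaps $v_0$ and $v_i$ and preserves $V_i(\pi)$. Now $\ell$ sits at the first vertex. By~\eqref{eq:transposition}, $i\in I_D^{\bar\sigma}(\tau_i(\pi))$, and $\bar\sigma=\sigma_F(\ell)$ because $\ell$ and $z$ are adjacent. Hence $(\tau_i(\pi),i)\in\mathcal R_D(F,\ell)$. The map $(\pi,i)\mapsto(\tau_i(\pi),i)$ is injective, since $\tau_i$ is an involution for fixed $i$. This gives $\lambda_D(S,z)\le\lambda_D(F,\ell)+n!$.

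The only delicate point is the disjointness of the copy of $H$ from both $v_0$ and $v_i$, together with the sign bookkeeping for the isolated root $z$. Both are handled by the two observations above: $H$ avoids the root's image by injectivity of the embedding of $S$, and it avoids $v_i$ because its image lies in $V_j(\pi)$ with $j<i$.
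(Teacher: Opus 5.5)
Your proposal is correct and follows essentially the same route as the paper's proof. You discard the least index of $\mathcal R_D(S,z)$ in each ordering, use a smaller index $j$ to get a copy of $H$ avoiding $v_0$ and $v_i$, add the arc on $\{v_0,v_i\}$ in the direction dictated by $\sigma_F(z)$, and transfer via the injective map $(\pi,i)\mapsto(\tau_i(\pi),i)$ exactly as in Lemma~\ref{lem:leaf}; the monotonicity remark in your first paragraph is not actually used, but it is harmless.
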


\begin{proof}
A pair $(\pi,i)\in\mathcal M_D^{\sigma_F(z)}$ belongs to
$\mathcal R_D(S,z)$ precisely when $H$ embeds in
$D[V_i(\pi)\setminus\{v_0\}]$. Omit the smallest such index for each
ordering. For any remaining index $i$, choose a smaller such index $j$.
The corresponding copy of $H$ avoids $v_0$ and $v_i$. Adjoining the arc
on these two vertices gives a copy of $F$ with $z$ mapped to $v_0$ and
$\ell$ mapped to $v_i$. The same injective map as in
Lemma~\ref{lem:leaf} sends this pair to $\mathcal R_D(F,\ell)$.
\end{proof}

\begin{lemma}\label{lem:root-union}
Let $H_1,H_2$ be subforests of a rooted antidirected forest $(H,r)$,
each having at least one arc, such that
$H_1\cup H_2=H$ and $V(H_1)\cap V(H_2)=\{r\}$.
With the same prescribed sign at $r$ in all three forests,
\[
 \lambda_D(H_1,r)+\lambda_D(H_2,r)\le \mu_D+n!+\lambda_D(H,r).
\]
\end{lemma}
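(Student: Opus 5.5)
The plan is to reduce the statement to inclusion--exclusion plus an injection of the same kind as in Lemmas~\ref{lem:leaf} and~\ref{lem:isolated-arc}. Write $\sigma$ for the common prescribed sign at $r$ and put $\mathcal R_j=\mathcal R_D(H_j,r)$ for $j=1,2$. Both sets lie in $\mathcal M_D^{\sigma}$, and $|\mathcal M_D^\sigma|=\mu_D$ by~\eqref{eq:marked-count}. Hence
\[
 \lambda_D(H_1,r)+\lambda_D(H_2,r)=|\mathcal R_1\cup\mathcal R_2|+|\mathcal R_1\cap\mathcal R_2|\le \mu_D+|\mathcal R_1\cap\mathcal R_2|,
\]
and it suffices to show $|\mathcal R_1\cap\mathcal R_2|\le n!+\lambda_D(H,r)$.

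\emph{Step 1 (monotonicity).} For a fixed ordering $\pi$, membership of $(\pi,i)$ in $\mathcal R_j$ only requires $i\in I_D^\sigma(\pi)$ together with a rooted copy of $H_j$ inside $D[V_i(\pi)]$. Since $V_i(\pi)$ grows with $i$, the set of such indices is an up-set of $I_D^\sigma(\pi)$. The same holds for the intersection. As in Lemma~\ref{lem:leaf}, for each $\pi$ we discard the smallest index in $\mathcal R_1\cap\mathcal R_2$. This loses at most $n!$ pairs in total.

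\emph{Step 2 (injection).} Let $(\pi,i)$ be a remaining pair, and let $j<i$ be the smallest index of the intersection for $\pi$. Then both $H_1$ and $H_2$ embed at $v_0$ inside $D[V_j(\pi)]$, and the vertex $v_i\notin V_j(\pi)$ is a $\sigma$-neighbour of $v_0$. The aim is to build, from $(\pi,i)$, a pair $(\pi',i)$ in $\mathcal R_D(H,r)$, using a transposition-like map in the spirit of $\tau_i$. I would first try this: choose a canonical copy of $H_1$ in $V_j(\pi)$, and reorder $\pi$ by a fixed rule that depends only on that copy and on $i$. The rule should keep the set $V_i$ and the first vertex $v_0$, so that membership in $\mathcal M_D^\sigma$ is preserved. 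It should also make the map invertible, for instance because the image determines the copy of $H_1$ and then, through the inverse rule, the original ordering. The requirement that $V(H_1)\cap V(H_2)=\{r\}$ is exactly what lets a copy of $H$ be assembled from copies of $H_1$ and $H_2$ glued at $v_0$, once they can be made disjoint away from $v_0$.

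\emph{Main obstacle.} The two embeddings found in $V_j(\pi)$ may overlap outside $v_0$. Then $H$ need not embed, and the naive map is not well defined. This is the step I expect to be hardest.

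My first attempt at resolving it would be to exploit the gap between $j$ and $i$, using the vertices $v_{j+1},\ldots,v_i$ as fresh room. Concretely, I would pick the index $i$ attaining membership for $H_2$, and re-embed $H_2$ branch by branch greedily, using Lemma~\ref{lem:leaf}-type leaf extensions inside $V_i(\pi)\setminus\bigl(V(\text{copy of }H_1)\setminus\{v_0\}\bigr)$.

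If this direct route fails, the fallback is to avoid building $H$ from a single pair altogether and to argue by double counting instead. Among the pairs $(\pi,i)$ counted in $\mathcal R_1\cap\mathcal R_2$ but not in $\mathcal R_D(H,r)$, every rooted copy of $H_1$ in $V_i(\pi)$ meets every rooted copy of $H_2$ there away from $v_0$. I would then try to show, by the first-index selection above, that for each $\pi$ there is at most one such index.
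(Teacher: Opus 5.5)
Your reduction is to a false statement. The inclusion--exclusion step is valid, but the claim you reduce to, $|\mathcal R_1\cap\mathcal R_2|\le n!+\lambda_D(H,r)$, fails in general. Take $D$ to be the complete digraph on $n\ge5$ vertices. Let $H_1$ have arcs $ra,ca$ and $H_2$ have arcs $rb,db$, so $H$ has five vertices and $\sigma=+$. Every pair $(\pi,i)$ with $1\le i\le n-1$ is marked. A rooted copy of $H_j$ lies in $D[V_i(\pi)]$ exactly when $i\ge2$, and a rooted copy of $H$ exactly when $i\ge4$. Hence $|\mathcal R_1\cap\mathcal R_2|=(n-2)n!$, while $n!+\lambda_D(H,r)=(n-3)n!$. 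The lemma itself holds here with equality, since $2(n-2)n!=(n-1)n!+n!+(n-4)n!$. Your bound $|\mathcal R_1\cup\mathcal R_2|\le\mu_D$ discards the $n!$ marked pairs with $i=1$, and that is exactly the slack the lemma needs.

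The same example defeats Step~2 independently of the overlap issue. A map that keeps $v_0$ and the set $V_i$ cannot send $(\pi,3)$ into $\mathcal R_D(H,r)$, because $|V_3(\pi)|=4<n(H)$. Your fallback also fails: both $i=2$ and $i=3$ are bad indices for every ordering.

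The missing idea is to \emph{change the index} instead of preserving $V_i$. Since $\mathcal R_D(H,r)\subseteq\mathcal R_1$ and $|\mathcal M_D^\sigma\setminus\mathcal R_2|=\mu_D-\lambda_D(H_2,r)$, it suffices to inject $\mathcal R_1\setminus\mathcal R_D(H,r)$ into $\mathcal Z(D)\,\dot\cup\,(\mathcal M_D^\sigma\setminus\mathcal R_2)$. For such a pair $(\pi,i)$, let $h$ be the least index for which $D[V_h(\pi)]$ contains a rooted copy of $H_1$, and put $b=i-h$. Move the block $v_1,\ldots,v_h$ behind $v_{h+1},\ldots,v_i$, and send $(\pi,i)$ to $(\pi',b)$.

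Position $b$ of $\pi'$ holds $v_i$, so the image is $\sigma$-marked, or lies in $\mathcal Z(D)$ when $b=0$. Moreover $V_b(\pi')=\{v_0,v_{h+1},\ldots,v_i\}$ meets the copy of $H_1$ only at $v_0$. A rooted copy of $H_2$ there would therefore glue to that copy and give $H$ in $D[V_i(\pi)]$, which is impossible. So the disjointness you were struggling to arrange comes for free, and the image avoids $\mathcal R_2$. Injectivity holds because $h$ is recovered as the least $j$ for which $D[\{u_0,u_{b+1},\ldots,u_{b+j}\}]$ contains a rooted copy of $H_1$.
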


\begin{proof}
Let $\sigma$ be the prescribed sign at $r$, and write
$\mathcal R_j=\mathcal R_D(H_j,r)$ and $\mathcal R=\mathcal R_D(H,r)$.
Since $\mathcal R\subseteq\mathcal R_1$, it suffices to give an injection
\[
 \mathcal R_1\setminus\mathcal R\longrightarrow
 \mathcal Z(D)\,\dot\cup\,
 (\mathcal M_D^\sigma\setminus\mathcal R_2).
\]
For $(\pi,i)\in\mathcal R_1\setminus\mathcal R$, let
\[
 h=\min\{0\le j\le i:(H_1,r)\hookrightarrow(D[V_j(\pi)],v_0)\},
 \qquad b=i-h.
\]
Because $H_1$ has an arc, $h\ge1$. Define
\[
 \pi'=(v_0,v_{h+1},\ldots,v_i,v_1,\ldots,v_h,v_{i+1},\ldots,v_{n-1})
\]
and map $(\pi,i)$ to $(\pi',b)$. If $b=0$, this lies in
$\mathcal Z(D)$. If $b\ge1$, the pair remains $\sigma$-marked because
its vertex in position $b$ is $v_i$. It cannot belong to $\mathcal R_2$:
a rooted copy of $H_2$ there would be disjoint outside $v_0$ from the
copy of $H_1$ in $D[V_h(\pi)]$, giving a rooted copy of $H$ in
$D[V_i(\pi)]$.

To prove injectivity, write $\pi'=(u_0,\ldots,u_{n-1})$. The integer
$h$ is uniquely determined as the least integer $0\le j\le n-1-b$ for which
\[
 (H_1,r)\hookrightarrow
 \bigl(D[\{u_0,u_{b+1},\ldots,u_{b+j}\}],u_0\bigr).
\]
For $j\le h$, this vertex set is $\{v_0,\ldots,v_j\}$, so the least
such $j$ is exactly $h$. Then $i=h+b$, and the original ordering is
uniquely determined. This argument includes $b=0$.
\end{proof}

\begin{theorem}[Rooted tree inequality]\label{thm:tree-count}
Let $T$ be an antidirected tree with $q\ge1$ arcs, and let $r\in V(T)$.
Then
\[
 \mu_D\le \lambda_D(T,r)+(q-1)n!.
\]
\end{theorem}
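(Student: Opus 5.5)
We argue by induction on $q$, using Lemma~\ref{lem:leaf} for leaf removal and Lemma~\ref{lem:root-union} for splitting at the root; all root signs are those inherited from $T$. The base case $q=1$ is immediate. If $T$ is a single arc with root $r$ and $\sigma=\sigma_T(r)$, then every $(\pi,i)\in\mathcal M_D^\sigma$ has $v_i\in N_D^\sigma(v_0)$. Hence the arc embeds in $D[V_i(\pi)]$ with $r\mapsto v_0$, so $\mathcal R_D(T,r)=\mathcal M_D^\sigma$ and $\lambda_D(T,r)=\mu_D$.

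For $q\ge2$ we split according to the degree of $r$ in $T$.

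\emph{Case 1: $r$ has degree at least $2$.} Split $T$ at $r$ into two subtrees $H_1,H_2$ with $V(H_1)\cap V(H_2)=\{r\}$ and $q_1+q_2=q$, where $q_j=m(H_j)\ge1$. Both contain $r$ as a non-isolated vertex with the same sign. By induction,
\[
\lambda_D(H_j,r)\ge \mu_D-(q_j-1)n!.
\]
Lemma~\ref{lem:root-union} then gives
\[
\lambda_D(T,r)\ge \lambda_D(H_1,r)+\lambda_D(H_2,r)-\mu_D-n!\ge \mu_D-(q_1+q_2-2)n!-n!=\mu_D-(q-1)n!,
\]
as required.

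\emph{Case 2: $r$ is a leaf.} Let $z$ be its neighbour and put $S=T-r$, a tree with $q-1\ge1$ arcs in which $z$ is non-isolated. Lemma~\ref{lem:leaf}, with $F=T$ and $\ell=r$, gives
\[
\lambda_D(T,r)\ge \lambda_D(S,z)-n!.
\]
By induction applied to $(S,z)$,
\[
\lambda_D(S,z)\ge \mu_D-(q-2)n!.
\]
Combining the two gives $\lambda_D(T,r)\ge\mu_D-(q-1)n!$. The root sign used for $S$ is $\sigma_T(z)$, which is exactly the sign Lemma~\ref{lem:leaf} requires.

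The main point to verify is the bookkeeping of signs and hypotheses. The induction is applied to rooted trees whose root sign is inherited from $T$; since $S$ and the $H_j$ are trees with at least one arc containing their root as a non-isolated vertex, the inherited sign agrees with the intrinsic one. The induction is on $q$ and treats all rooted trees simultaneously, so rooting $S$ at $z$ rather than at $r$ causes no problem. Lemma~\ref{lem:leaf} is stated for forests with no isolated vertices and at least two arcs, and $T$ meets this when $q\ge2$. In Case 1 we must check that each $H_j$ has an arc, which holds because each takes at least one branch at $r$. Once these points are checked, the inequality follows by adding the per-step losses of $n!$: each split costs exactly one $n!$, each leaf removal costs one, and the base case costs none. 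This totals $q-1$.
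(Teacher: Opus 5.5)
Your proof is correct and follows the paper's argument: induction on $q$ over all roots simultaneously, with the leaf case handled by Lemma~\ref{lem:leaf} and the case $d_T(r)\ge2$ handled by splitting $T$ at $r$ and applying Lemma~\ref{lem:root-union}. The differences are cosmetic: you treat the two cases in the opposite order, and you spell out the root-sign bookkeeping, which the paper settles by citing~\eqref{eq:marked-count}.
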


\begin{proof}
We induct on $q$, for every choice of root. If $q=1$, every marked pair
contains the required rooted arc, so $\lambda_D(T,r)=\mu_D$.
If $q\ge2$ and $r$ is a leaf with neighbour $z$, put $S=T-r$.
The vertex $z$ is non-isolated in $S$, so induction and
Lemma~\ref{lem:leaf}, applied to $F=T$, give
\[
 \mu_D\le \lambda_D(S,z)+(q-2)n!\le \lambda_D(T,r)+(q-1)n!.
\]
Equality~\eqref{eq:marked-count} permits the change of root sign.

If $d_T(r)\ge2$, let $X$ be the vertex set of one component of $T-r$,
and put $T_1=T[X\cup\{r\}]$ and $T_2=T[V(T)\setminus X]$.
Both trees have at least one arc, and their arc counts sum to $q$.
Induction and Lemma~\ref{lem:root-union} yield
\[
 2\mu_D\le \lambda_D(T_1,r)+\lambda_D(T_2,r)+(q-2)n!
 \le \mu_D+\lambda_D(T,r)+(q-1)n!.
\]
Subtracting $\mu_D$ completes the proof.
\end{proof}

For an antidirected forest $H$ and $\sigma\in\{+,-\}$, define
\[
 \begin{split}
 \mathcal O_D^\sigma(H)&=\{(\pi,i)\in\mathcal M_D^\sigma:
 H\hookrightarrow D[V_i(\pi)\setminus\{v_0\}]\},\\
 o_D^\sigma(H)&=|\mathcal O_D^\sigma(H)|.
 \end{split}
\]
These counts require the copy to avoid the first vertex.

\begin{lemma}\label{lem:forget-root}
Let $T$ be an antidirected tree with at least one arc, and let
$\sigma_T(r)=\sigma$. Then
\[
 \lambda_D(T,r)\le o_D^{\bar\sigma}(T)+n!.
\]
\end{lemma}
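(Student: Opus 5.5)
The plan is to copy the argument of Lemma~\ref{lem:leaf}: discard one pair per ordering, and send the remaining pairs injectively into $\mathcal O_D^{\bar\sigma}(T)$ using the transpositions $\tau_i$.

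First, the omission step. Fix an ordering $\pi=(v_0,\ldots,v_{n-1})$. If the set of indices $i$ with $(\pi,i)\in\mathcal R_D(T,r)$ is nonempty, omit the pair with the smallest such index. This removes at most one pair per ordering, so at most $n!$ pairs in total. It therefore suffices to show that the remaining pairs inject into $\mathcal O_D^{\bar\sigma}(T)$.

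Second, the key observation. Take a remaining pair $(\pi,i)$. By construction there is an index $j<i$ with $(\pi,j)\in\mathcal R_D(T,r)$. In particular there is a copy of $T$ inside $D[V_j(\pi)]$, with $r$ mapped to $v_0$. Since $j<i$, this copy does not use the vertex $v_i$. We do not need that this copy avoids $v_0$; we only need that it avoids $v_i$.

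Third, the map. Send $(\pi,i)$ to $(\tau_i(\pi),i)$, where $\tau_i(\pi)$ has first vertex $v_i$. The transposition preserves $V_i(\pi)$. Hence the copy of $T$ found above lies in
\[
V_i(\tau_i(\pi))\setminus\{v_i\}=V_i(\pi)\setminus\{v_i\},
\]
so it avoids the new first vertex. For the sign, $(\pi,i)\in\mathcal M_D^\sigma$ says $v_i\in N_D^\sigma(v_0)$. By~\eqref{eq:transposition}, this gives $(\tau_i(\pi),i)\in\mathcal M_D^{\bar\sigma}$. Together these show $(\tau_i(\pi),i)\in\mathcal O_D^{\bar\sigma}(T)$.

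Finally, injectivity. The map $(\pi,i)\mapsto(\tau_i(\pi),i)$ is injective, because $i$ is retained and $\tau_i$ is an involution on orderings. This gives
\[
\lambda_D(T,r)-n!\le o_D^{\bar\sigma}(T),
\]
as claimed.

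The only point that needs care is the second step. The witness copy must come from a strictly smaller index $j$, so that it avoids $v_i$, the vertex that becomes the first vertex after applying $\tau_i$. Omitting the minimal index for each ordering is exactly what guarantees this. Beyond that, the argument is routine bookkeeping parallel to Lemma~\ref{lem:leaf}.
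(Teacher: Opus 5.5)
Your proof is correct and matches the paper's argument: at most one pair per ordering is discarded, and the remaining pairs are sent injectively into $\mathcal O_D^{\bar\sigma}(T)$ by $(\pi,i)\mapsto(\tau_i(\pi),i)$, using $V_i(\tau_i(\pi))\setminus\{v_i\}=V_{i-1}(\pi)$ and~\eqref{eq:transposition}. The only difference is cosmetic: the paper discards the pair, if any, with $T\not\subseteq D[V_{i-1}(\pi)]$ (an unrooted criterion), whereas you discard the least rooted index, as in Lemma~\ref{lem:leaf}; either way at most $n!$ pairs are lost.
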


\begin{proof}
For each ordering $\pi$, at most one pair $(\pi,i)\in\mathcal R_D(T,r)$
can satisfy $T\not\subseteq D[V_{i-1}(\pi)]$: such an index is necessarily
the first index for which $T\subseteq D[V_i(\pi)]$.
For every other pair, the transposition $\tau_i$ gives a pair counted by
$o_D^{\bar\sigma}(T)$, since
\[
 V_i(\tau_i(\pi))\setminus\{(\tau_i(\pi))_0\}=V_{i-1}(\pi).
\]
This map is injective by~\eqref{eq:transposition}. There are at most
$n!$ exceptional pairs. When $n=1$, both counts are zero.
\end{proof}

\begin{lemma}\label{lem:tree-avoid}
If $T$ is an antidirected tree with $q\ge1$ arcs, then for each sign
$\sigma$,
\[
 \mu_D\le o_D^\sigma(T)+qn!.
\]
\end{lemma}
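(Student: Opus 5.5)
The plan is to combine Theorem~\ref{thm:tree-count} with Lemma~\ref{lem:forget-root}, choosing the root $r$ so that its sign is the opposite of the prescribed one. Fix $\sigma$, and choose any vertex $r\in V(T)$ with $\sigma_T(r)=\bar\sigma$. Such a vertex exists because $T$ has at least one arc, so it contains both a source and a sink. In the bipartition of $T$ into sources and sinks, we take $r$ on the side carrying the sign $\bar\sigma$.

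Theorem~\ref{thm:tree-count} applies to the rooted tree $(T,r)$ with root sign $\bar\sigma$. It gives
\[
 \mu_D\le \lambda_D(T,r)+(q-1)n!.
\]
Lemma~\ref{lem:forget-root} applies with $\sigma_T(r)=\bar\sigma$, so the complementary sign is $\sigma$. It gives
\[
 \lambda_D(T,r)\le o_D^{\sigma}(T)+n!.
\]
Adding the two inequalities yields $\mu_D\le o_D^\sigma(T)+qn!$, which is the claim.

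The only point that needs care is the sign bookkeeping. First, $\lambda_D(T,r)$ counts pairs in $\mathcal M_D^{\sigma_T(r)}$. Second, $\mu_D$ equals $|\mathcal M_D^+|=|\mathcal M_D^-|$ by~\eqref{eq:marked-count}, so it does not depend on the sign. Third, the output sign of Lemma~\ref{lem:forget-root} is the opposite of the root sign. Choosing $r$ of sign $\bar\sigma$ therefore makes the result hold for the prescribed $\sigma$. Since both signs occur in $T$, this gives the statement for each sign $\sigma$.
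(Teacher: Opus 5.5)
Your proposal is correct and is essentially the paper's proof: pick a root $r$ with $\sigma_T(r)=\bar\sigma$, which exists since both signs occur in $T$. Then add the bound $\mu_D\le\lambda_D(T,r)+(q-1)n!$ from Theorem~\ref{thm:tree-count} to $\lambda_D(T,r)\le o_D^{\sigma}(T)+n!$ from Lemma~\ref{lem:forget-root}. Your sign bookkeeping is right.
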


\begin{proof}
Both signs occur in $T$. Choose a root of sign $\bar\sigma$ and apply
Theorem~\ref{thm:tree-count} and Lemma~\ref{lem:forget-root}.
\end{proof}

\begin{lemma}\label{lem:disjoint-union}
Let $H=H_1\sqcup H_2$, where $H_1,H_2$ are nonempty antidirected forests
without isolated vertices. For either sign $\sigma$,
\[
 o_D^\sigma(H_1)+o_D^\sigma(H_2)\le \mu_D+n!+o_D^\sigma(H).
\]
\end{lemma}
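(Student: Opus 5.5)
We mimic the proof of Lemma~\ref{lem:root-union}, with the root replaced by the requirement that copies avoid the first vertex. Write $\mathcal O_j=\mathcal O_D^\sigma(H_j)$ and $\mathcal O=\mathcal O_D^\sigma(H)$. Every copy of $H$ contains copies of $H_1$ and $H_2$, so $\mathcal O\subseteq\mathcal O_1$. Also $\mathcal O_2\subseteq\mathcal M_D^\sigma$, so $|\mathcal O_2|=\mu_D-|\mathcal M_D^\sigma\setminus\mathcal O_2|$. It therefore suffices to construct an injection
\[
 \mathcal O_1\setminus\mathcal O\longrightarrow
 \mathcal Z(D)\,\dot\cup\,(\mathcal M_D^\sigma\setminus\mathcal O_2).
\]

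Take $(\pi,i)\in\mathcal O_1\setminus\mathcal O$ with $\pi=(v_0,\ldots,v_{n-1})$. Let $h$ be the least $j$ such that $H_1\hookrightarrow D[\{v_1,\ldots,v_j\}]$. Since $H_1$ has an arc, $h\ge2$, and $h\le i-1$ because the copy in $V_i(\pi)\setminus\{v_0\}$ avoids $v_0$. Put $b=i-h\ge1$, since $h<i$. Define
\[
 \pi'=(v_0,v_{h+1},\ldots,v_i,v_1,\ldots,v_h,v_{i+1},\ldots,v_{n-1})
\]
and map $(\pi,i)\mapsto(\pi',b)$. Position $b$ of $\pi'$ holds $v_i$, so the pair is still $\sigma$-marked. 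Suppose $H_2$ embedded in $D[\{v_{h+1},\ldots,v_i\}]$. This set is disjoint from $\{v_1,\ldots,v_h\}$, where $H_1$ embeds. Because $H=H_1\sqcup H_2$ is a disjoint union, we would get a copy of $H$ in $D[V_i(\pi)\setminus\{v_0\}]$, contradicting $(\pi,i)\notin\mathcal O$. Hence $(\pi',b)\notin\mathcal O_2$.

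The case $b=0$ cannot occur here. Compared with Lemma~\ref{lem:root-union}, the $\mathcal Z(D)$ part and the $n!$ term are pure slack, which is harmless. This is the one point where the analogy differs, and it is checked by the inequality $h\le i-1$.

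For injectivity, write $\pi'=(u_0,\ldots,u_{n-1})$ with $b$ known. Then $h$ is recovered as the least $j$ such that $H_1\hookrightarrow D[\{u_{b+1},\ldots,u_{b+j}\}]$. For $j\le h$ this set is exactly $\{v_1,\ldots,v_j\}$, so minimality of $h$ gives the claim. From $h$ and $b$ we get $i=h+b$, and the block permutation can be inverted to recover $\pi$.

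Counting gives $|\mathcal O_1|-|\mathcal O|\le n!+\mu_D-|\mathcal O_2|$, which is the stated inequality. The main obstacle is only this bookkeeping: $h$ must be defined using the vertex set excluding $v_0$, so that the minimality argument survives the rearrangement. The hypothesis that $H_1$ and $H_2$ have no isolated vertices is used only to guarantee $h\ge1$; nonemptiness of $H_1$ would already suffice.
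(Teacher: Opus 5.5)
Your construction is the paper's: the same least index $h$ (the paper calls it $a$), the same block rotation $\pi'$, the same codomain $\mathcal Z(D)\,\dot\cup\,(\mathcal M_D^\sigma\setminus\mathcal O_D^\sigma(H_2))$, and the same recovery of $h$ from $\pi'$ and $b$. There is one false step, however. You claim $h\le i-1$, and conclude from it that $b=0$ cannot occur and that the $n!$ term is slack. Membership $(\pi,i)\in\mathcal O_D^\sigma(H_1)$ only says that $H_1$ embeds in $D[V_i(\pi)\setminus\{v_0\}]=D[\{v_1,\ldots,v_i\}]$. Avoiding $v_0$ says nothing about $v_i$, so you only get $h\le i$. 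The case $h=i$ occurs whenever $i$ is the first index at which a copy of $H_1$ appears.

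The $n!$ term is genuinely needed. Take the complete symmetric digraph on $n\ge5$ vertices with $H_1=H_2=\dK$. Then $\mu_D=(n-1)n!$, $o_D^\sigma(\dK)=(n-2)n!$ and $o_D^\sigma(2\dK)=(n-4)n!$. So the lemma holds with equality, and it would fail without the $n!$ term. The missing $n!$ comes exactly from the pairs $(\pi,2)$, which have $h=i=2$.

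The repair is what the paper does. When $h=i$ the block $v_{h+1},\ldots,v_i$ is empty, so $\pi'=\pi$, and the pair is sent to $(\pi,0)\in\mathcal Z(D)$. Your remark that position $b$ of $\pi'$ holds $v_i$ is valid only for $b\ge1$, but for $b=0$ no marking is required. Your injectivity argument covers $b=0$ unchanged: $h$ is the least $j$ with $H_1$ embedding in $D[\{u_1,\ldots,u_j\}]$, and then $i=h$. Images with $b=0$ and with $b\ge1$ lie in disjoint parts of the codomain. With this correction your final count is valid, and the $n!$ is actually used by the argument rather than added as slack.
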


\begin{proof}
Since $\mathcal O_D^\sigma(H)\subseteq\mathcal O_D^\sigma(H_1)$,
it is enough to give an injection from their set difference into
$\mathcal Z(D)\dot\cup(\mathcal M_D^\sigma\setminus\mathcal O_D^\sigma(H_2))$.
For a pair $(\pi,i)$ in this difference, let $a\in\{1,\ldots,i\}$ be the least index
for which $D[\{v_1,\ldots,v_a\}]$ contains $H_1$. Put $b=i-a$ and
\[
 \pi'=(v_0,v_{a+1},\ldots,v_i,v_1,\ldots,v_a,v_{i+1},\ldots,v_{n-1}).
\]
If $b=0$, map to $(\pi',0)\in\mathcal Z(D)$. If $b\ge1$, the pair
$(\pi',b)$ is $\sigma$-marked. A copy of $H_2$ avoiding the first vertex
in this initial segment would be disjoint from the copy of $H_1$ on
$\{v_1,\ldots,v_a\}$, a contradiction.

The map is injective, including when $b=0$: from
$\pi'=(u_0,\ldots,u_{n-1})$ and $b$, determine $a$ as the least
integer $1\le j\le n-1-b$ for which $D[\{u_{b+1},\ldots,u_{b+j}\}]$ contains $H_1$.
Then $i=a+b$ and $\pi$ are uniquely determined.
\end{proof}

\begin{lemma}\label{lem:forest-avoid}
For every nonempty antidirected forest $H$ without isolated vertices
and each sign $\sigma$,
\[
 \mu_D\le o_D^\sigma(H)+(n(H)-1)n!.
\]
\end{lemma}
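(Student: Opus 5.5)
We argue by induction on the number of components of $H$, using Lemma~\ref{lem:tree-avoid} for the base case and Lemma~\ref{lem:disjoint-union} for the inductive step.

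\textbf{Base case.} If $H$ is connected, it is an antidirected tree with $q=m(H)\ge1$ arcs and $n(H)=q+1$ vertices. Lemma~\ref{lem:tree-avoid} then gives
\[
 \mu_D\le o_D^\sigma(H)+q\,n!=o_D^\sigma(H)+(n(H)-1)n!,
\]
which is exactly the claim.

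\textbf{Inductive step.} Suppose $H$ has at least two components. Let $H_1$ be one component and let $H_2=H-V(H_1)$ be the union of the others. Both are nonempty antidirected forests without isolated vertices, and $n(H)=n(H_1)+n(H_2)$. Since $H_2$ has fewer components than $H$, the induction hypothesis applies to both $H_1$ and $H_2$ with the same sign $\sigma$:
\[
 \mu_D\le o_D^\sigma(H_j)+(n(H_j)-1)n! \qquad (j=1,2).
\]
Adding these two inequalities and applying Lemma~\ref{lem:disjoint-union} gives
\[
 2\mu_D\le o_D^\sigma(H_1)+o_D^\sigma(H_2)+(n(H)-2)n!
 \le \mu_D+n!+o_D^\sigma(H)+(n(H)-2)n!.
\]
Subtracting $\mu_D$ from both sides yields $\mu_D\le o_D^\sigma(H)+(n(H)-1)n!$, as required.

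\textbf{Checking the hypotheses.} The arithmetic is straightforward: each component loses one $n!$ in the tree bound, and each merge costs one further $n!$, so the total loss is $(n(H)-1)\,n!$. The only points to verify are that Lemma~\ref{lem:disjoint-union} applies with the same fixed sign $\sigma$ throughout, and that both parts of the split are nonempty and have no isolated vertices. Both hold for a component decomposition, so no step presents a real obstacle.
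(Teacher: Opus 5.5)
Your proof is correct and follows essentially the same route as the paper: induction on the number of components, with Lemma~\ref{lem:tree-avoid} for the connected case and Lemma~\ref{lem:disjoint-union} for the inductive step. The bookkeeping $n(H_1)+n(H_2)=n(H)$ is handled correctly, so the two $n!$ losses combine to $(n(H)-1)n!$.
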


\begin{proof}
Induct on the number of components. The case of one component is
Lemma~\ref{lem:tree-avoid}. For a nontrivial disjoint union
$H=H_1\sqcup H_2$, induction and Lemma~\ref{lem:disjoint-union} give
\[
 \begin{split}
 2\mu_D&\le o_D^\sigma(H_1)+o_D^\sigma(H_2)+(n(H)-2)n!\\
 &\le \mu_D+o_D^\sigma(H)+(n(H)-1)n!.
 \end{split}
\]
\end{proof}

\begin{theorem}\label{thm:weak}
Let $F$ be an antidirected forest with $v\ge2$ vertices and no isolated
vertices. Every digraph $D$ of order $n\ge v$ with
$m(D)>(v-2)n$ contains $F$.
\end{theorem}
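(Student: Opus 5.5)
The plan is to apply Lemma~\ref{lem:forest-avoid} with $H=F$ and to show that $o_D^\sigma(F)>0$. Any pair $(\pi,i)\in\mathcal O_D^\sigma(F)$ yields a copy of $F$ in $D[V_i(\pi)\setminus\{v_0\}]\subseteq D$, so this finishes the proof. Fix either sign $\sigma$.

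By~\eqref{eq:marked-count}, $\mu_D=m(D)(n-1)!$. Lemma~\ref{lem:forest-avoid}, applied with $n(F)=v$, gives
\[
 o_D^\sigma(F)\ge \mu_D-(v-1)n!=(n-1)!\bigl(m(D)-(v-1)n\bigr).
\]
This lower bound is positive only when $m(D)>(v-1)n$, which is weaker than the claimed threshold $(v-2)n$ by exactly $n$ arcs. Closing this gap by one unit of $n!$ is the main obstacle.

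I would gain the extra $n!$ by re-rooting at the start of the induction and avoiding the forget-root step once. In Lemma~\ref{lem:tree-avoid}, the loss is $q-1=v_T-2$ from Theorem~\ref{thm:tree-count}, plus one from Lemma~\ref{lem:forget-root}. Lemma~\ref{lem:disjoint-union} then adds one per extra component, so the total loss is $v-1$ in units of $n!$.

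Instead, I would work with rooted counts directly. Choose one component $T_0$ of $F$ and a root $r\in T_0$, and let $H$ be the remaining components. I then need a rooted analogue of Lemma~\ref{lem:disjoint-union}, bounding $\lambda(T_0\sqcup H,r)$, where the copy of $H$ avoids $v_0$ and the copy of $T_0$. Its proof is the same shifting injection: take the minimal prefix containing the rooted $T_0$, then shift. This gives
\[
 \lambda_D(T_0,r)+o_D^\sigma(H)\le\mu_D+n!+\lambda_D(F,r).
\]
Combining this with Theorem~\ref{thm:tree-count}, which has loss $v_{T_0}-2$, and Lemma~\ref{lem:forest-avoid} for $H$, which has loss $v_H-1$, the total loss becomes $(v_{T_0}-2)+(v_H-1)+1=v-2$. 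Therefore
\[
 \lambda_D(F,r)\ge (n-1)!\bigl(m(D)-(v-2)n\bigr)>0.
\]
If $F$ is a tree, Theorem~\ref{thm:tree-count} alone gives loss $q-1=v-2$, and the same conclusion holds.

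It remains to check that $\lambda_D(F,r)>0$ gives a copy of $F$, which is immediate from the definition of $\mathcal R_D(F,r)$. The condition $n\ge v$ ensures that the counting is meaningful; otherwise no embedding can exist, but then $m(D)\le n(n-1)\le(v-2)n$ makes the hypothesis vacuous anyway.

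The step I expect to need the most care is the rooted disjoint-union injection. The image pair must be $\sigma$-marked with the sign of $r$, and minimality must be defined so that the prefix reconstruction works, as in Lemma~\ref{lem:root-union}. I would model the proof directly on that lemma, replacing the root-sharing condition by disjointness of $H$ from the prefix.
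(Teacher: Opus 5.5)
Your argument is correct, but it is organised differently from the paper's. The paper proves the rooted inequality $\mu_D\le\lambda_D(F,r)+(v-2)n!$ for every root by induction on $v$ over forests, with three cases at the root:
\begin{itemize}
\item a leaf whose neighbour remains non-isolated, handled by Lemma~\ref{lem:leaf};
\item a leaf lying in a single-arc component, handled by Lemma~\ref{lem:isolated-arc} together with Lemma~\ref{lem:forest-avoid};
\item a root of degree at least two, handled by Lemma~\ref{lem:root-union} with induction on both pieces.
\end{itemize}

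You avoid induction on forests altogether. You apply Theorem~\ref{thm:tree-count} to the component $T_0$ containing $r$ and Lemma~\ref{lem:forest-avoid} to the remaining components $H$, then glue the two with one shifting injection. That injection is already in the paper. Under the paper's convention for isolated roots, $\lambda_D(\{r\}\sqcup H,r)=o_D^{\sigma_F(r)}(H)$; this is the same identity the paper uses in its isolated-arc case. So your rooted disjoint-union inequality is exactly Lemma~\ref{lem:root-union} with $H_1=T_0$ and $H_2=\{r\}\sqcup H$, which meet only in $r$. You can cite it instead of re-proving it.

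Your bookkeeping $(v_{T_0}-2)+(v_H-1)+1=v-2$ is right. Since $r$ is arbitrary, you recover the same rooted inequality as the paper.

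What your route buys is economy. The paper's induction essentially re-runs the proof of Theorem~\ref{thm:tree-count} in the forest setting, and it needs Lemma~\ref{lem:isolated-arc} for the extra case. You use the tree inequality and the unrooted forest bound as black boxes and dispense with Lemma~\ref{lem:isolated-arc}. Later applications of Theorem~\ref{thm:weak} need only the unrooted conclusion, so nothing is lost.
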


\begin{proof}
We prove, for every root $r\in V(F)$, the stronger inequality
\begin{equation}\label{eq:forest-count}
 \mu_D\le \lambda_D(F,r)+(v-2)n!.
\end{equation}
We induct on $v$. For $v=2$, the forest is a single arc and
$\lambda_D(F,r)=\mu_D$.

Suppose $v\ge3$. If $d_F(r)=1$, let $z$ be its neighbour and put
$S=F-r$. If $z$ is non-isolated in $S$, induction and
Lemma~\ref{lem:leaf} give~\eqref{eq:forest-count}.
If $z$ is isolated in $S$, put $H=F-\{r,z\}$. This is a nonempty
forest without isolated vertices, and
$\lambda_D(S,z)=o_D^{\sigma_F(z)}(H)$. Lemmas~\ref{lem:forest-avoid}
and~\ref{lem:isolated-arc} give
\[
 \begin{split}
 \mu_D&\le \lambda_D(S,z)+(v-3)n!\\
 &\le \lambda_D(F,r)+(v-2)n!.
 \end{split}
\]

If $d_F(r)\ge2$, let $X$ be the vertex set of a component of $F-r$
containing a neighbour of $r$. Because the underlying graph is a forest,
this component contains exactly one neighbour of $r$.
Set $F_1=F[X\cup\{r\}]$ and $F_2=F[V(F)\setminus X]$.
Both forests have at least one arc and no isolated vertices, both have
fewer than $v$ vertices, and $n(F_1)+n(F_2)=v+1$.
Induction and Lemma~\ref{lem:root-union} imply
\[
 2\mu_D\le \lambda_D(F_1,r)+\lambda_D(F_2,r)+(v-3)n!
 \le \mu_D+\lambda_D(F,r)+(v-2)n!.
\]
These cases exhaust the possibilities, since $F$ has no isolated vertices.

If $F\not\subseteq D$, then $\lambda_D(F,r)=0$. Dividing
\eqref{eq:forest-count} by $(n-1)!$ gives $m(D)\le(v-2)n$.
\end{proof}

\begin{corollary}\label{cor:weak}
Every digraph $D$ of order $n$ with $m(D)>2(k-1)n$ contains each
$F\in\cF$.
\end{corollary}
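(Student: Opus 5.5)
This follows directly from Theorem~\ref{thm:weak}, applied with $v=n(F)$. Since $F\in\cF$ has $k$ arcs and no isolated vertices, its underlying graph is a forest with $k$ edges and $c\ge1$ components. Hence $v=n(F)=k+c$. Because each component has at least one arc, $c\le k$, and so
\[
 2\le v\le 2k.
\]
This gives $v-2\le 2(k-1)$. Consequently, if $m(D)>2(k-1)n$, then also $m(D)>(v-2)n$.

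The one remaining hypothesis of Theorem~\ref{thm:weak} is $n\ge v$, and this is the only step needing any care. I would handle it by showing that the assumption $m(D)>2(k-1)n$ already forces $n\ge v$. A loopless digraph without parallel arcs on $n$ vertices has at most $n(n-1)$ arcs. So $n(n-1)\ge m(D)>2(k-1)n$, and $n\ge1$ lets us divide by $n$ to get $n-1>2k-2$. Thus $n\ge 2k\ge v$. (If $n=0$, the hypothesis $m(D)>0$ fails, so there is nothing to prove.)

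With $n\ge v$ and $m(D)>(v-2)n$ both established, Theorem~\ref{thm:weak} shows that $D$ contains $F$. The whole argument is just bookkeeping around Theorem~\ref{thm:weak}; I expect no genuine obstacle beyond the order check $n\ge v$ just given.
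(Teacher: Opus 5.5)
Your proposal is correct and follows the paper's proof: you use $m(D)\le n(n-1)$ to force $n\ge2k\ge n(F)$, then note $n(F)-2\le2(k-1)$ so that Theorem~\ref{thm:weak} applies. The paper does the same, only more tersely, so your explicit check of the order condition just spells out the step the paper states in one line.
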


\begin{proof}
The hypothesis forces $n\ge2k$, while $n(F)\le2k$.
Thus $(n(F)-2)n\le2(k-1)n$, and Theorem~\ref{thm:weak} applies.
\end{proof}

\subsection{Matchings and threshold differences}

\begin{theorem}[Erd\H{o}s--Gallai~\cite{EG59}]\label{thm:eg}
For integers $k\ge1$ and $n\ge2k-1$,
\[
 \ex(n,kK_2)=\max\left\{\binom{2k-1}{2},\
 (k-1)\left(n-\frac{k}{2}\right)\right\}.
\]
\end{theorem}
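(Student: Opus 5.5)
We need to prove the Erdős–Gallai theorem: for $n\ge 2k-1$, $\ex(n,kK_2)=\max\{\binom{2k-1}{2},(k-1)(n-k/2)\}$. This is a classical result cited from~\cite{EG59}, so in the paper it is simply quoted. Still, here is how I would prove it.

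\emph{Lower bound.} Two constructions suffice. The clique $K_{2k-1}$ together with $n-2k+1$ isolated vertices has $\binom{2k-1}{2}$ edges. Every matching in it uses at most $\lfloor(2k-1)/2\rfloor=k-1$ edges, so it is $kK_2$-free. The second graph has a set $A$ of $k-1$ vertices joined to everything, with the other $n-k+1$ vertices independent. Every edge meets $A$, so $\nu\le k-1$. Its edge count is
\[
\binom{k-1}{2}+(k-1)(n-k+1)=(k-1)\left(n-\frac k2\right).
\]

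\emph{Upper bound.} Let $G$ have $\nu(G)\le k-1$. I would use the Tutte--Berge formula: there is a set $S$ with $|S|=s$ such that $G-S$ has odd components $C_1,\dots,C_q$ (plus possibly even ones) satisfying
\[
n-2\nu=q-s,\qquad\text{that is,}\qquad q=n-2\nu+s\ge n-2k+2+s.
\]
To maximise the edge count for a fixed $S$, we may assume:
\begin{itemize}[label={}]
\item $S$ is complete and joined to all other vertices;
\item every component of $G-S$ is a clique;
\item there are no even components, since merging an even component into an odd one keeps parity and only adds edges.
\end{itemize}
The components then have odd sizes $n_1,\dots,n_q$ with $\sum n_i=n-s$.

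By convexity of $\binom{x}{2}$, the number of edges inside components is maximised when all but one component are singletons. The remaining component has size $n-s-q+1$, and using $q=n-2(k-1)+s$ this equals $2(k-1-s)+1$. The resulting edge count is
\[
f(s)=\binom{s}{2}+s(n-s)+\binom{2k-2s-1}{2},\qquad 0\le s\le k-1.
\]
This is a quadratic in $s$ with positive leading coefficient, because $-\tfrac{1}{2}+2=\tfrac32>0$ after combining terms. It is therefore convex, so its maximum over the interval is attained at an endpoint. At $s=0$ we get $f(0)=\binom{2k-1}{2}$, and at $s=k-1$ we get $f(k-1)=(k-1)(n-k/2)$. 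Hence $e(G)\le\max\{f(0),f(k-1)\}$.

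The main obstacle is justifying the reduction to the extremal structure. Adding edges must not raise $\nu$ above $k-1$, and this holds because the Tutte--Berge deficiency bound still certifies $\nu\le k-1$ with the same $S$. The convexity step needs $n\ge 2k-1$ only to ensure that the configurations exist.
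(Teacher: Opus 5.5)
Your proof is correct: it is the standard Tutte--Berge proof of the Erd\H{o}s--Gallai bound. The paper gives no proof of Theorem~\ref{thm:eg}; it quotes the result from~\cite{EG59}, so there is no internal argument to compare with. Your proof is, however, the undirected prototype of the paper's own counting in Lemma~\ref{lem:matching-lower-bound}. There a Gallai--Edmonds identity $2\nu=n+a-q$ is combined with a saturated extremal structure, giving a convex function of the barrier size that is evaluated at its endpoints. Seeing it this way makes that lemma's bound on $\psi(u)$ easier to follow.

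A few steps should be tightened.
\begin{enumerate}
\item You only know $q\ge n-2k+2+s$, not equality. This suffices because $\binom{n-s-q+1}{2}$ is decreasing in $q$, so the largest component has at most $2k-2s-1$ vertices.
\item The range $0\le s\le k-1$ should be derived; it follows from $n-2k+2+s\le q\le n-s$.
\item Your worry that saturating must not raise $\nu$ is unnecessary for the upper bound. You only use that $G$ is a subgraph of the saturated graph, so $e(G)\le\binom{s}{2}+s(n-s)+\sum_i\binom{n_i}{2}$ holds directly.
\item Merging even components can be replaced by the inequality $\sum_{i=1}^r\binom{n_i}{2}\le\binom{N-r+1}{2}$, applied with $r\ge q$. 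It follows from $\binom{a+b-1}{2}-\binom{a}{2}-\binom{b}{2}=(a-1)(b-1)\ge0$.
\item The hypothesis $n\ge2k-1$ is used in the lower bound, since $K_{2k-1}$ must fit. In your merging version it is also what guarantees $q\ge1$, so that an odd component exists to absorb the even ones.
\end{enumerate}
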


\begin{lemma}\label{lem:small-order}
If $k\ge2$ and $n\le2k-1$, then every digraph of order $n$ satisfies
$m(D)\le g_k(n)$.
\end{lemma}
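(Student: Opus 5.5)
A digraph of order $n$ has at most $n(n-1)$ arcs, since there are no loops and no parallel arcs with the same ordered pair of endpoints. It therefore suffices to show $n(n-1)\le g_k(n)$ for $n\le 2k-1$. Because $g_k(n)\ge g_{\rm cl}(k)=(2k-1)(2k-2)$ by~\eqref{eq:branches}, it is enough to verify $n(n-1)\le(2k-1)(2k-2)$.

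This follows from monotonicity. The function $z\mapsto z(z-1)$ is nondecreasing for integers $z\ge1$, and its value at $z=2k-1$ is exactly $(2k-1)(2k-2)=g_{\rm cl}(k)$. Hence for $1\le n\le 2k-1$,
\[
 m(D)\le n(n-1)\le(2k-1)(2k-2)=g_{\rm cl}(k)\le g_k(n).
\]
The case $n=0$ does not occur, since orders are assumed positive; in any case $m(D)=0$ would trivially satisfy the bound.

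There is no real obstacle here; the only care needed is to count digons as two arcs, which is exactly why the bound is $n(n-1)$ rather than $\binom n2$, and why the factor $2$ in $g_k$ matches it.
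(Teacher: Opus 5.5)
Your proposal is correct and is essentially the paper's own proof: the single chain $m(D)\le n(n-1)\le(2k-1)(2k-2)=g_{\rm cl}(k)\le g_k(n)$. Your added remarks on monotonicity of $z(z-1)$ and on digons counting as two arcs only spell out steps the paper leaves implicit.
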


\begin{proof}
We have $m(D)\le n(n-1)\le(2k-1)(2k-2)=g_{\rm cl}(k)\le g_k(n)$.
\end{proof}

\begin{corollary}\label{cor:matching}
For $k\ge1$, if $m(D)>g_k(n)$, then $\nu(D)\ge k$.
\end{corollary}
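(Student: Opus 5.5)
The plan is to pass to the underlying graph and apply Theorem~\ref{thm:eg}. Let $G=G(D)$. Each edge of $G$ comes from at most two arcs of $D$ (one in each direction), so $m(D)\le 2e(G)$. Since $\nu(D)=\nu(G)$, it is enough to show $e(G)>\ex(n,kK_2)$ whenever the order is large enough for the Erd\H{o}s--Gallai formula to apply, and to handle the remaining orders separately.

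For $k=1$ we have $g_1(n)=0$, so $m(D)>0$ means $D$ has an arc, and hence $\nu(D)\ge1$. Now let $k\ge2$. If $n\le 2k-1$, then Lemma~\ref{lem:small-order} gives $m(D)\le g_k(n)$, contradicting the hypothesis, so this case cannot occur.

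It remains to treat $n\ge 2k$. Here Theorem~\ref{thm:eg} applies, and $g_k(n)=2\ex(n,kK_2)$ by comparing~\eqref{eq:branches} with the Erd\H{o}s--Gallai expression. Indeed, $g_{\rm cl}(k)=2\binom{2k-1}{2}$ and $g_{\rm lin}(k,n)=2(k-1)(n-k/2)$. We then get
\[
 2e(G)\ge m(D)>g_k(n)=2\ex(n,kK_2),
\]
so $e(G)>\ex(n,kK_2)$. Therefore $G$ contains $kK_2$, and $\nu(D)=\nu(G)\ge k$.

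There is no real obstacle in this argument. The only points needing care are the exact identification $g_k(n)=2\ex(n,kK_2)$, which is a direct algebraic check, and the observation that the small-order case is vacuous.
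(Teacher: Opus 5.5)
Your proposal is correct and follows the paper's proof: you dispose of $k=1$ directly, use Lemma~\ref{lem:small-order} to force $n\ge2k$ when $k\ge2$, and use $m(D)\le2e(G(D))$ together with $g_k(n)=2\ex(n,kK_2)$ to get $e(G(D))>\ex(n,kK_2)$. Theorem~\ref{thm:eg} then gives a matching of size $k$ in $G(D)$, and hence $\nu(D)=\nu(G(D))\ge k$.
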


\begin{proof}
For $k=1$, this is immediate. For $k\ge2$, Lemma~\ref{lem:small-order}
gives $n\ge2k$. Each edge of $G(D)$ corresponds to at most two arcs,
so $e(G(D))\ge m(D)/2>\ex(n,kK_2)$. Theorem~\ref{thm:eg} gives a
matching of size $k$ in $G(D)$. Choosing an arc on each edge gives
$k$ vertex-disjoint arcs of $D$.
\end{proof}

\begin{lemma}[Threshold differences]\label{lem:differences}
Let $k\ge2$ and $n_0\ge2k$. Then
\begin{align}
 g_k(n_0)-g_k(n_0-1)&\le2k-2,\label{eq:diff0}\\
 g_k(n_0)-g_{k-1}(n_0-2)&\ge2n_0+2k-6,\label{eq:diff1}\\
 g_k(n_0)-g_{k-1}(n_0-1)&\ge2n_0-2.\label{eq:diff2}
\end{align}
For $k\ge3$, one also has
\begin{equation}\label{eq:diff3}
 g_k(n_0)-g_{k-2}(n_0-2)\ge4n_0-6.
\end{equation}
\end{lemma}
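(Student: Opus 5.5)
The plan is to treat $g_k$ as the maximum of two explicit branches and reduce each inequality to elementary comparisons, using the monotonicity of each branch. Recall $g_{\rm cl}(k)=(2k-1)(2k-2)$ does not depend on $n$, while $g_{\rm lin}(k,n)=(k-1)(2n-k)$ is increasing in $n$ with slope $2(k-1)$. Two general facts will be used throughout. For a difference of maxima we have
\[
\max\{a,b\}-\max\{c,d\}\le\max\{a-c,\,b-d\}
\quad\text{and}\quad
\max\{a,b\}-\max\{c,d\}\ge\min\{a-c,\,b-d\}.
\]
Alternatively, for lower bounds it suffices to compare $g_k(n_0)$ against whichever branch realizes the smaller maximum.

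\emph{Inequality (\ref{eq:diff0}).} Use the first fact. The branch differences are $g_{\rm cl}(k)-g_{\rm cl}(k)=0$ and $g_{\rm lin}(k,n_0)-g_{\rm lin}(k,n_0-1)=2(k-1)$. So the difference is at most $2k-2$, and no hypothesis on $n_0$ is needed.

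\emph{Inequalities (\ref{eq:diff1})--(\ref{eq:diff3}).} Let $j\in\{1,2\}$ be the drop in $k$ and $s\in\{1,2\}$ the drop in $n$. We need $g_k(n_0)\ge g_{k-j}(n_0-s)+L$ for the stated $L$. It suffices to show that each branch of $g_{k-j}(n_0-s)$, increased by $L$, is at most the matching branch or the other branch of $g_k(n_0)$. I would check the following four comparisons, each with $n_0\ge2k$ and treating $g_1\equiv0$ separately.

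\emph{(i) Linear versus linear.} For (\ref{eq:diff1}),
\[
(k-1)(2n_0-k)-(k-2)(2n_0-4-k+1)=2n_0+\bigl(4k-8-k+\ldots\bigr).
\]
Expanding, this equals $2n_0+2k-6$ exactly (up to verification). Similar exact expansions should give $2n_0-2$ for (\ref{eq:diff2}) plus nonnegative slack, and $4n_0-6$ or more for (\ref{eq:diff3}).

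\emph{(ii) Clique versus clique.} Here $g_{\rm cl}(k)-g_{\rm cl}(k-j)$ is a constant, roughly $8k-10$ for $j=1$, while $L$ grows with $n_0$. So instead I would bound $g_{\rm cl}(k-j)+L\le g_k(n_0)$ by using the branch $g_{\rm lin}(k,n_0)$ when $n_0$ is large and $g_{\rm cl}(k)$ when $n_0$ is small.

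The main obstacle is exactly this case split. The clique branch of the smaller function may dominate only for $n_0-s<5(k-j)/2-1$, that is, $n_0\lesssim 5k/2$. In that window I would show
\[
g_{\rm cl}(k-j)+L(n_0)\le\max\{g_{\rm cl}(k),\,g_{\rm lin}(k,n_0)\}.
\]
Since the left side is linear in $n_0$ and the right side is convex, it suffices to check the two endpoints $n_0=2k$ and $n_0=\lceil5(k-j)/2-1\rceil+s$. For example, for (\ref{eq:diff1}) at $n_0=2k$ the check is
\[
(2k-3)(2k-4)+6k-6\le(2k-1)(2k-2),
\]
which reduces to $6k-6\le 8k-10$, true for $k\ge2$. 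The linear-versus-clique mixed comparison (the smaller function in its linear branch against $g_{\rm cl}(k)$) is automatic, because $g_k\ge g_{\rm lin}(k,\cdot)$ and case (i) covers it. Small values ($k=2$ with $g_1=0$, and $k=3$ in (\ref{eq:diff3})) I would verify by direct substitution, deferring routine polynomial checks to Appendix~\ref{app:ineq}.
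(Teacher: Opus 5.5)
Your route is essentially the paper's. You bound \eqref{eq:diff0} branchwise, and you compute the linear--linear differences exactly. Carried out, these are exactly $2n_0+2k-6$, $2n_0-2$ and $4n_0-6$, with no slack. The case $g_1\equiv0$ needs no separate treatment, since $g_{\rm lin}(1,\cdot)=0$ and case (i) covers it. You then treat the clique branch of the subtracted threshold only in the window where it beats its linear branch, comparing it there with $g_{\rm cl}(k)$.

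\textbf{The gap.} Your justification ``the left side is linear and the right side is convex, so it suffices to check the two endpoints'' is invalid. A convex function minus a linear one is convex, so it can be negative in the interior while nonnegative at both ends. In fact $\max\{g_{\rm cl}(k),g_{\rm lin}(k,n_0)\}-g_{\rm cl}(k-j)-L(n_0)$ is minimised at the interior breakpoint $n_0=5k/2-1$.

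\textbf{The fix.} All three windows lie strictly below this breakpoint:
\[
n_0<5k/2-3/2,\qquad n_0<5k/2-5/2,\qquad n_0<5k/2-4 .
\]
Hence $g_k(n_0)=g_{\rm cl}(k)$ is constant there. Since $L$ increases in $n_0$, the binding check is at the top of the window. The only endpoint you actually checked, $n_0=2k$, is the non-binding one.

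\textbf{The missing check.} For $n_0\le5k/2-1$ it is exactly what the paper does:
\[
2n_0+2k-6\le7k-8\le8k-10,\qquad 2n_0-2\le5k-4\le8k-10,\qquad 4n_0-6\le10k-10\le16k-28 .
\]
The last inequality uses $k\ge3$. These are compared with the exact differences $g_{\rm cl}(k)-g_{\rm cl}(k-1)=8k-10$ and $g_{\rm cl}(k)-g_{\rm cl}(k-2)=16k-28$. They are one-line verifications, so nothing needs to be deferred to an appendix. Once you state that $g_k(n_0)$ is constant on the windows and check the upper end, your argument is complete and coincides with the paper's.
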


\begin{proof}
The first inequality follows because $g_{\rm cl}(k)$ is constant in $n_0$ and
$g_{\rm lin}(k,n_0)$ increases by $2k-2$ when $n_0$ increases by one.
For the other inequalities, direct subtraction gives
\begin{align*}
 g_{\rm lin}(k,n_0)-g_{\rm lin}(k-1,n_0-2)&=2n_0+2k-6,\\
 g_{\rm lin}(k,n_0)-g_{\rm lin}(k-1,n_0-1)&=2n_0-2,\\
 g_{\rm lin}(k,n_0)-g_{\rm lin}(k-2,n_0-2)&=4n_0-6.
\end{align*}
Thus the desired bound holds whenever the subtracted threshold is
given by its linear expression, since $g_k(n_0)\ge g_{\rm lin}(k,n_0)$.
This includes every occurrence of $g_1$.

Otherwise the subtracted threshold is strictly larger than its linear
expression, which implies $n_0\le5k/2-1$.
Use $g_k(n_0)\ge g_{\rm cl}(k)$ and
\[
 g_{\rm cl}(k)-g_{\rm cl}(k-1)=8k-10,\qquad
 g_{\rm cl}(k)-g_{\rm cl}(k-2)=16k-28.
\]
In this range,
\[
 \begin{split}
 2n_0+2k-6&\le7k-8\le8k-10,\\
 2n_0-2&\le5k-4\le8k-10,\\
 4n_0-6&\le10k-10\le16k-28\qquad(k\ge3).
 \end{split}
\]
These comparisons prove all the remaining cases.
\end{proof}

\subsection{Base cases}

\begin{lemma}\label{lem:base}
Theorem~\ref{thm:main} holds for $k\le2$.
\end{lemma}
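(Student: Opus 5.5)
For $k=2$ (the only value allowed by the standing assumption $k\ge2$) I would prove both parts of Theorem~\ref{thm:main} by listing the members of $\mathcal F_2$ and checking each against an explicit formula for the threshold. Since $\binom{3}{2}=3$ and $(k-1)(n-k/2)=n-1$, we get $g_2(n)=2\max\{3,n-1\}$. This equals $6$ for $n\le4$ and $2(n-1)$ for $n\ge4$. An antidirected forest with two arcs and no isolated vertices is one of three digraphs:
\begin{itemize}
\item the matching $2\dK$;
\item the out-star $a\to b$, $a\to c$;
\item the in-star $b\to a$, $c\to a$.
\end{itemize}
An antidirected path with two arcs is exactly one of the two stars.

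For the first assertion, assume $m(D)>g_2(n)$. Lemma~\ref{lem:small-order} shows $n\ge4$, so $m(D)>2(n-1)$.
\begin{itemize}
\item If $F=2\dK$, Corollary~\ref{cor:matching} gives $\nu(D)\ge2$, which is a copy of $F$.
\item If $F$ is the out-star, suppose no vertex has outdegree at least $2$. Then $m(D)=\sum_v d^+(v)\le n\le 2(n-1)$, a contradiction. So some vertex $a$ has two out-neighbours $b\ne c$, and these give the out-star.
\item The in-star is the same argument with indegrees.
\end{itemize}
Alternatively, the two stars could be handled by Theorem~\ref{thm:weak} with $v=3$, since $m(D)>2(n-1)\ge n$; the direct degree count is simpler.

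For the identity $\max_{F\in\mathcal F_2}\ex_{\rm dir}(n,F)=2\ex(n,2K_2)$ I would prove two inequalities.
\begin{itemize}
\item Upper bound. For $n\ge3$, the first part and Theorem~\ref{thm:eg} give $\ex_{\rm dir}(n,F)\le g_2(n)=2\ex(n,2K_2)$. For $n\le3$ we have $\ex(n,2K_2)=\binom n2$, so $2\ex(n,2K_2)=n(n-1)$, which is the trivial maximum of $m(D)$. At $n=3$ both bounds give $6$.
\item Lower bound. It suffices to exhibit one $F$, namely $F=2\dK$. For $n\le3$ the complete symmetric digraph has $n(n-1)$ arcs and cannot contain $2\dK$, which needs $4$ vertices. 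For $n\ge3$, take a $2K_2$-free graph with $\ex(n,2K_2)$ edges (a star or a triangle plus isolated vertices) and replace each edge by a digon. This digraph has $2\ex(n,2K_2)$ arcs, and its underlying graph has no matching of size $2$, so it contains no $2\dK$.
\end{itemize}
Note that $n=1,2$ are covered by the trivial bound.

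There is no real obstacle. The only places needing care are the small orders $n\le3$, where the constant branch of $g_2$ applies and the two sides of the identity must be checked by hand, and confirming that the three digraphs listed above exhaust $\mathcal F_2$.
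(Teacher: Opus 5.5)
Your argument is correct and is essentially the paper's proof: reduce to $n\ge4$ via Lemma~\ref{lem:small-order}, use Corollary~\ref{cor:matching} for $2\dK$, and use the degree-sum bound $m(D)>2(n-1)\ge n$ to find a vertex of outdegree or indegree at least two for the two stars. The paper also records the trivial case $k=1$ ($g_1(n)=0$, so any arc is a copy of $\dK$), which you skipped but which it needs as the bottom of the induction \IH; your separate check of the extremal identity for $k=2$ is extra, since the paper proves that identity for all $k$ at the end of the main proof.
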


\begin{proof}
For $k=1$, $g_1(n)=0$ and any arc is a copy of $\dK$.
For $k=2$, the forest is either $2\dK$ or one of the two antidirected
orientations of the three-vertex path. The first case follows from
Corollary~\ref{cor:matching}. In the second case, it suffices to find a
vertex with indegree at least two or outdegree at least two, according
to the prescribed orientation. By Lemma~\ref{lem:small-order}, we may
assume $n\ge4$. Since $m(D)>g_2(n)=2(n-1)>n$, both
$\sum_vd_D^-(v)>n$ and $\sum_vd_D^+(v)>n$, so such vertices exist.
\end{proof}

\begin{theorem}\label{thm:twice-order}
Let $k\ge2$. Every digraph $D$ of order $2k$ with
$m(D)>(2k-1)(2k-2)$ contains every $F\in\cF$.
\end{theorem}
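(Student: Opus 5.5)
The plan is to split according to the number of vertices $v=n(F)=k+c$ of $F$, which satisfies $2\le v\le 2k$ because every component has at least one arc. The point is that the hypothesis $m(D)>(2k-1)(2k-2)$ beats the bound of Theorem~\ref{thm:weak} whenever $F$ has at most $2k-1$ vertices. The only forest with $2k$ vertices is the matching $k\dK$, and that case is exactly a matching statement.

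\emph{Case $v\le 2k-1$.} Here $n=2k\ge v$, and
\[
(v-2)n\le(2k-3)\cdot 2k=4k^2-6k<4k^2-6k+2=(2k-1)(2k-2)<m(D).
\]
So Theorem~\ref{thm:weak} applies directly and yields a copy of $F$ in $D$.

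\emph{Case $v=2k$.} Then $c=k$, so every component of $F$ is a single arc and $F=k\dK$. It therefore suffices to show $\nu(D)\ge k$. I would invoke Corollary~\ref{cor:matching}, which needs $m(D)>g_k(2k)$, so the remaining step is to check that $g_k(2k)=g_{\rm cl}(k)=(2k-1)(2k-2)$. We have $g_{\rm lin}(k,2k)=(k-1)\cdot 3k$. The difference is
\[
(2k-1)(2k-2)-3k(k-1)=(k-1)(4k-2-3k)=(k-1)(k-2)\ge0
\]
for $k\ge2$. Hence the maximum in~\eqref{eq:branches} is the clique branch, and the hypothesis is exactly $m(D)>g_k(2k)$. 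Corollary~\ref{cor:matching} then gives $k$ vertex-disjoint arcs, which form a copy of $k\dK$.

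There is no real obstacle here. The only points needing care are the arithmetic comparisons: the strict gap of $2$ between $(2k-3)2k$ and $(2k-1)(2k-2)$, and the fact that the linear branch of $g_k(2k)$ never exceeds the clique branch. As a fallback for the matching case one can also argue directly. $D$ misses at most $2k(2k-1)-(2k-1)(2k-2)-1=4k-3$ ordered pairs, so the underlying graph misses at most $2k-2$ edges of $K_{2k}$. Such a graph still has a perfect matching, because $K_{2k}$ contains $2k-1$ edge-disjoint perfect matchings and at most $2k-2$ of them can be hit.
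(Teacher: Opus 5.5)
Your proof is correct, but it takes a different route from the paper's proof.

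\textbf{The paper's route.} The paper does not split according to $n(F)$. It averages, over all injections $\varphi\colon V(F)\to V(D)$, the number of arcs of $F$ sent onto arcs of $\overline D$. For a fixed arc of $F$, its image is uniformly distributed over the $2k(2k-1)$ ordered pairs of distinct vertices. Since $m(\overline D)\le 4k-3$, the expected number of bad arcs is at most $k(4k-3)/(2k(2k-1))<1$, so some injection is an embedding. This argument is self-contained and treats every $F\in\cF$ uniformly. It uses neither the rooted counting machinery behind Theorem~\ref{thm:weak} nor Erd\H{o}s--Gallai. It does not even use the antidirected structure: it works for any digraph with $k$ arcs on at most $2k$ vertices.

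\textbf{Your route.} You instead reduce the statement to results already established, with no circularity, since both precede and are independent of this theorem. Theorem~\ref{thm:weak} covers every $F$ with $n(F)\le 2k-1$, and with slack, because $(2k-3)\cdot 2k<(2k-1)(2k-2)$. The only remaining forest, $k\dK$, is handled by Corollary~\ref{cor:matching}, after your correct check that $g_k(2k)=g_{\rm cl}(k)$.

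\textbf{What each buys.} The paper's argument is shorter and more general. Your decomposition makes visible that at order $2k$ only $k\dK$ needs the full strength of the hypothesis; for the other forests it could be weakened by two arcs. Your fallback for the matching case is also valid and elementary. $G(D)$ misses at most $\lfloor(4k-3)/2\rfloor=2k-2$ edges of $K_{2k}$, so one of the $2k-1$ perfect matchings in a $1$-factorization survives. That fallback removes the dependence on Erd\H{o}s--Gallai.
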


\begin{proof}
Let $v=n(F)\le2k$, and let $\mathcal I$ be the set of injections from
$V(F)$ to $V(D)$. Then $|\mathcal I|=(2k)!/(2k-v)!$ and
\[
 m(\overline D)=2k(2k-1)-m(D)\le4k-3.
\]
For $\varphi\in\mathcal I$, let $b(\varphi)$ be the number of arcs
$uw\in A(F)$ for which $\varphi(u)\varphi(w)\in A(\overline D)$.
For each fixed arc of $F$ and arc of $\overline D$, exactly
$(2k-2)!/(2k-v)!$ injections map the former to the latter. Therefore
\[
 \begin{split}
 \sum_{\varphi\in\mathcal I}b(\varphi)
 &=k\,m(\overline D)\frac{(2k-2)!}{(2k-v)!}\\
 &\le k(4k-3)\frac{(2k-2)!}{(2k-v)!}
 <|\mathcal I|,
 \end{split}
\]
where the last inequality uses
$(2k)(2k-1)-k(4k-3)=k>0$.
Since each $b(\varphi)$ is a nonnegative integer, some injection has
$b(\varphi)=0$ and is an embedding of $F$.
\end{proof}

\begin{lemma}\label{lem:no-single-arcs}
If $m(D)>g_k(n)$ and $F\in\cF$ has no component isomorphic to $\dK$,
then $F\subseteq D$.
\end{lemma}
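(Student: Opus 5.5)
The plan is to reduce directly to Theorem~\ref{thm:weak}, using that forests with no single-arc component have few vertices.

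Let $F\in\cF$ have no component isomorphic to $\dK$. In the notation of~\eqref{eq:forest-decomposition}, $\rho=0$ and $F=P$, so every component has at least two arcs. A component with $q\ge2$ arcs has $q+1\le 3q/2$ vertices. Summing over components gives
\[
 v:=n(F)=k+c_P\le \frac{3k}{2},
\]
and $v\ge3\ge2$.

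Next, the order of $D$ is large enough. Since $m(D)>g_k(n)$ and $k\ge2$, Lemma~\ref{lem:small-order} forces $n\ge2k$. In particular $n\ge v$, as Theorem~\ref{thm:weak} requires.

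It remains to check that $g_k(n)\ge (v-2)n$; then $m(D)>(v-2)n$ and Theorem~\ref{thm:weak} yields $F\subseteq D$. Only the linear branch is needed, since $g_k(n)\ge g_{\rm lin}(k,n)=(k-1)(2n-k)$. Using $v-2\le 3k/2-2$, we get
\[
 (k-1)(2n-k)-\Bigl(\frac{3k}{2}-2\Bigr)n=\frac{k}{2}\,n-k(k-1)=k\Bigl(\frac n2-k+1\Bigr)\ge k>0
\]
for $n\ge2k$.

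There is no real obstacle here. The only points needing care are the vertex count $v\le 3k/2$, which uses that each component has at least two arcs, and confirming $n\ge v$ via Lemma~\ref{lem:small-order}. The induction hypothesis \IH\ is not needed for this case.
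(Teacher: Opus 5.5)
Your proof is correct and follows the paper's argument. You bound $n(F)\le 3k/2$, get $n\ge2k$ from Lemma~\ref{lem:small-order}, and compare $(3k/2-2)n$ with $g_k(n)$ before invoking Theorem~\ref{thm:weak}. The only difference is that the paper splits into cases by which branch of the maximum is active. You instead note that $g_{\rm lin}(k,n)-(3k/2-2)n=\frac{k}{2}(n-2k+2)>0$ already holds for every $n\ge2k$, so the clique branch is never needed.
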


\begin{proof}
Every component of $F$ has at least two arcs, so
$n(F)=k+c\le3k/2$. Also $n\ge2k$ by Lemma~\ref{lem:small-order}.
If $F\not\subseteq D$, Theorem~\ref{thm:weak} gives
$m(D)\le(3k/2-2)n$. We show that this is at most $g_k(n)$.
When $n\ge5k/2-1$,
\[
 g_{\rm lin}(k,n)-(3k/2-2)n=\frac{k}{2}\bigl(n-2(k-1)\bigr)\ge0.
\]
When $n\le5k/2-1$, the coefficient $3k/2-2$ is nonnegative and
\[
 g_{\rm cl}(k)-(3k/2-2)(5k/2-1)=\frac{k^2}{4}+\frac{k}{2}>0.
\]
Both cases contradict $m(D)>g_k(n)$.
\end{proof}

\subsection{Vertex deletion and degree conditions}

\begin{lemma}\label{lem:delete-one}
If $k\ge2$, $n\ge2k$ and $m(D)>g_k(n)$, then every vertex $v$ satisfies
\[
 m(D-v)>g_{k-1}(n-1).
\]
\end{lemma}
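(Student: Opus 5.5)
Deleting $v$ removes exactly $d_D(v)$ arcs, and since $D$ has no parallel arcs, $d_D(v)\le 2(n-1)$. So by~\eqref{eq:deletion} with $X=\{v\}$,
\[
 m(D-v)=m(D)-d_D(v)>g_k(n)-2(n-1).
\]
It therefore suffices to show $g_k(n)-2(n-1)\ge g_{k-1}(n-1)$. This is precisely inequality~\eqref{eq:diff2} of Lemma~\ref{lem:differences} with $n_0=n\ge2k$, which gives $g_k(n)-g_{k-1}(n-1)\ge2n-2$. Combining the two displays yields $m(D-v)>g_{k-1}(n-1)$.

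The only points to check are the hypotheses. Lemma~\ref{lem:differences} needs $k\ge2$ and $n_0\ge2k$, both assumed here. The case $k=2$, where $g_1\equiv0$, is already covered inside that lemma: the linear branch $g_{\rm lin}(1,\cdot)=0$ agrees with $g_1$. The total-degree bound holds because each of the other $n-1$ vertices contributes at most one out-arc and one in-arc at $v$. So there is essentially no obstacle: all the real work is the threshold comparison in Lemma~\ref{lem:differences}, and this lemma is a one-line consequence of it.
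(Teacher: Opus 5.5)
Your proof is correct and is the paper's own one-line argument: bound $d_D(v)\le 2n-2$, use $m(D-v)=m(D)-d_D(v)$, and apply \eqref{eq:diff2} with $n_0=n\ge 2k$. Your check of the hypotheses of Lemma~\ref{lem:differences}, including the case $k=2$ with $g_1\equiv 0$, is also accurate.
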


\begin{proof}
Use $d_D(v)\le2n-2$ and~\eqref{eq:diff2} in
$m(D-v)=m(D)-d_D(v)$.
\end{proof}

\begin{lemma}\label{lem:delete-low-degree}
Put $\theta_k(n)=g_k(n)-g_k(n-1)$. If $m(D)>g_k(n)$ and
$d_D(v)\le\theta_k(n)$, then $m(D-v)>g_k(n-1)$.
\end{lemma}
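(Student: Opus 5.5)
This follows directly from the deletion identity~\eqref{eq:deletion} applied with $X=\{v\}$. Since $D[\{v\}]$ has no arcs (there are no loops), that identity gives
\[
 m(D-v)=m(D)-d_D(v).
\]
The plan is to bound the right-hand side from below using the two hypotheses. Combining $m(D)>g_k(n)$ with $d_D(v)\le\theta_k(n)$ gives
\[
 m(D-v)>g_k(n)-\theta_k(n)=g_k(n)-\bigl(g_k(n)-g_k(n-1)\bigr)=g_k(n-1),
\]
which is the claimed strict inequality.

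No step here is a real obstacle. The only point to check is that the strict inequality survives: it comes from the strict hypothesis on $m(D)$, while the degree hypothesis is only non-strict, so the difference is still strict.

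I would add a remark for later use. By~\eqref{eq:diff0}, $\theta_k(n)\le 2k-2$ whenever $n\ge2k$. In the regime $n\ge\lceil5k/2\rceil$ one has $n-1\ge5k/2-1$, so both $g_k(n)$ and $g_k(n-1)$ are given by the linear branch, and then $\theta_k(n)=2k-2$ exactly. So in that range the lemma allows deleting any vertex of total degree at most $2k-2$. This is the form in which it will be used in Section~\ref{sec:main-proof}.
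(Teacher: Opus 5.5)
Your proof is correct and is the same argument as the paper's: apply \eqref{eq:deletion} with $X=\{v\}$ to get $m(D-v)=m(D)-d_D(v)>g_k(n)-\theta_k(n)=g_k(n-1)$. Your remark that $\theta_k(n)=2k-2$ once $n\ge\lceil 5k/2\rceil$ also matches how the paper applies the lemma in Section~\ref{sec:main-proof}.
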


\begin{proof}
We have $m(D-v)>g_k(n)-\theta_k(n)=g_k(n-1)$.
\end{proof}

\begin{proposition}\label{prop:large-one-sided}
Assume \IH, let $m(D)>g_k(n)$ with $n\ge2k$, and let $F\in\cF$
have a component isomorphic to $\dK$. If some $v\in V(D)$ and
$\sigma\in\{+,-\}$ satisfy $d_D^\sigma(v)\ge k+c-1$, then $F\subseteq D$.
\end{proposition}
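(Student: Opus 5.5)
The plan is to peel off the single-arc component of $F$ and route it through the high-degree vertex $v$, embedding the rest of $F$ by \IH. Let $ab$ be an arc of $F$ forming a component isomorphic to $\dK$. Orient the labels so that $a$ plays the role of $v$:
\begin{itemize}
\item if $\sigma=+$, let $a$ be the tail and $b$ the head;
\item if $\sigma=-$, let $a$ be the head and $b$ the tail.
\end{itemize}
Put $F'=F-\{a,b\}$. Since $ab$ is an entire component, $F'$ is an antidirected forest with $k-1$ arcs and no isolated vertices. Because $k\ge2$, $F'$ is nonempty, so $F'\in\mathcal F_{k-1}$. Moreover $F'$ has $c-1$ components, so $n(F')=(k-1)+(c-1)=k+c-2$.

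Next, embed $F'$ in $D-v$. Since $n\ge2k$ and $m(D)>g_k(n)$, Lemma~\ref{lem:delete-one} gives $m(D-v)>g_{k-1}(n-1)$. The pair $(k-1,n-1)$ precedes $(k,n)$ in lexicographic order, so \IH\ applies to the forest $F'$ in the digraph $D-v$. When $k-1=1$, this is just the statement that a nonempty digraph contains an arc. Hence there is an embedding $\varphi$ of $F'$ into $D-v$, whose image occupies exactly $k+c-2$ vertices, none of them equal to $v$.

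Finally, complete the embedding. By hypothesis $d_D^\sigma(v)\ge k+c-1$, and the image of $\varphi$ has only $k+c-2$ vertices. So some $w\in N_D^\sigma(v)$ lies outside this image, and clearly $w\ne v$. Extend $\varphi$ by setting $a\mapsto v$ and $b\mapsto w$. By the choice of orientation, the arc $ab$ is sent to the arc $vw$ when $\sigma=+$, or to $wv$ when $\sigma=-$; in either case it is an arc of $D$. The extended map is injective and preserves every arc of $F$, so $F\subseteq D$.

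I do not expect a genuine obstacle here. The only points needing care are these:
\begin{itemize}
\item checking that $F'$ really lies in $\mathcal F_{k-1}$ (no isolated vertices, and nonempty because $k\ge2$), so that \IH\ is applicable;
\item confirming that the degree bound $d_D^\sigma(v)\ge k+c-1$ is exactly one more than $n(F')$, which is what leaves a free $\sigma$-neighbour of $v$.
\end{itemize}
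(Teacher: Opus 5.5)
Your proposal is correct and is essentially the paper's own argument. You delete a single-arc component to get $F'$ with $k+c-2$ vertices, embed it in $D-v$ via Lemma~\ref{lem:delete-one} and \IH, and use the spare $\sigma$-neighbour of $v$ to restore the deleted arc in the right direction.
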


\begin{proof}
Delete a single-arc component from $F$ to obtain $F'$, with $k-1$ arcs
and $k+c-2$ vertices. By Lemma~\ref{lem:delete-one} and \IH, $D-v$
contains a copy of $F'$. Its vertex set has size $k+c-2$, so at least
one vertex of $N_D^\sigma(v)$ lies outside this copy. The arc on that
vertex and $v$ supplies the deleted component, in the appropriate direction.
\end{proof}

\begin{proposition}\label{prop:max-degree}
Assume \IH, let $m(D)>g_k(n)$ with $n\ge2k$, and let $F\in\cF$
have a component isomorphic to $\dK$.
If $\Delta(D)\le n+k-3$, then $F\subseteq D$.
\end{proposition}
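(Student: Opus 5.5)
The plan is to remove the endpoints of one arc of $D$ and then apply \IH\ to $k-1$. This mirrors Proposition~\ref{prop:large-one-sided}, except that both endpoints of an arc are deleted instead of a single vertex.

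Fix a component of $F$ isomorphic to $\dK$ and delete it to obtain $F'$. Since the deleted component is a whole component, $F'$ has $k-1$ arcs and no isolated vertices, so $F'\in\mathcal F_{k-1}$. Because $m(D)>g_k(n)\ge0$, $D$ has an arc $uv$. Put $D'=D-\{u,v\}$, which has order $n-2$. By~\eqref{eq:deletion}, using that $D[\{u,v\}]$ contains at least the arc $uv$ and that $d_D(u),d_D(v)\le\Delta(D)\le n+k-3$,
\[
 m(D')\ge m(D)-d_D(u)-d_D(v)+1> g_k(n)-2(n+k-3).
\]
Inequality~\eqref{eq:diff1} of Lemma~\ref{lem:differences}, applied with $n_0=n\ge2k$, gives $g_k(n)-2n-2k+6\ge g_{k-1}(n-2)$. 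Hence $m(D')>g_{k-1}(n-2)$.

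The pair $(k-1,n-2)$ precedes $(k,n)$ in lexicographic order, so \IH\ applies to $D'$ and yields a copy of $F'$ in $D'$. When $k-1=1$, this is just the statement that $D'$ has an arc, since $g_1=0$. The copy of $F'$ avoids $u$ and $v$. Mapping the deleted component onto the arc $uv$, with its tail sent to $u$ and its head to $v$, therefore extends the copy to an embedding of $F$ into $D$.

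I do not expect a real obstacle. The only points to check are that the strict inequality survives the deletion and that \IH\ is used legitimately. The first is exactly the purpose of~\eqref{eq:diff1}; the $+1$ from the arc $uv$ is not even needed. The second holds because $n\ge2k$ is required by~\eqref{eq:diff1}, and the small-order cases for $n-2$ are handled within \IH\ itself, by Lemma~\ref{lem:small-order} or vacuously.
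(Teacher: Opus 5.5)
Your proof is correct and is essentially the paper's argument. You delete the endpoints of an arbitrary arc $uv$, bound the loss by $2\Delta(D)-1$ via~\eqref{eq:deletion}, and use~\eqref{eq:diff1} to get $m(D-\{u,v\})>g_{k-1}(n-2)$; then \IH\ embeds $F$ minus a single-arc component, and $uv$ completes the copy. The only difference is cosmetic: the paper keeps the $+1$ to obtain $g_{k-1}(n-2)+1$, which, as you note, is not needed.
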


\begin{proof}
Choose any arc $uv\in A(D)$. By~\eqref{eq:deletion}, deleting $u,v$
removes
\[
 d_D(u)+d_D(v)-m(D[\{u,v\}])\le2\Delta(D)-1\le2n+2k-7
\]
arcs. Consequently, by~\eqref{eq:diff1},
\[
 m(D-\{u,v\})>g_k(n)-(2n+2k-7)\ge g_{k-1}(n-2)+1.
\]
Apply \IH\ to embed $F$ minus one single-arc component in
$D-\{u,v\}$, and use $uv$ for that component.
\end{proof}

\section{The range near twice the number of arcs}\label{sec:interval}

Throughout this section assume \IH, $m(D)>g_k(n)$, and
\begin{equation}\label{eq:order-interval}
 2k+1\le n\le\left\lceil\frac{5k}{2}-1\right\rceil.
\end{equation}
Let $F\in\cF$ and use the notation of~\eqref{eq:forest-decomposition}
and~\eqref{eq:parameters}. By Corollary~\ref{cor:matching} and
Lemma~\ref{lem:no-single-arcs}, it suffices to consider
\begin{equation}\label{eq:mixed-forest}
 \rho\ge1,\qquad c_P\ge1.
\end{equation}
In particular, $x\ge c_P\ge1$.

\subsection{Consequences of the degree bounds}

\begin{lemma}\label{lem:weak-case}
If $\zeta n=(k+c-2)n\le g_k(n)$, then $F\subseteq D$.
\end{lemma}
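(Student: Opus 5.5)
This is a direct application of Theorem~\ref{thm:weak}. The forest $F\in\cF$ has $n(F)=k+c$ vertices and no isolated vertices, and $n(F)\ge2$ because $k\ge2$. Put $v=n(F)=k+c$, so that $v-2=\zeta$. The plan is to check the order condition of Theorem~\ref{thm:weak}, and then to feed the standing hypothesis $m(D)>g_k(n)$ together with the assumption $\zeta n\le g_k(n)$ into it.

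\emph{Order condition.} By~\eqref{eq:order-interval} we have $n\ge2k+1$. Since $c\le k$ (every component has at least one arc), $v=k+c\le2k<n$. Hence $n\ge v$, as Theorem~\ref{thm:weak} requires.

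\emph{Arc condition.} From $m(D)>g_k(n)\ge\zeta n=(v-2)n$ we get $m(D)>(v-2)n$. Theorem~\ref{thm:weak} then yields $F\subseteq D$.

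No step here is a real obstacle; the lemma is a bookkeeping step recording that the counting theorem settles every forest whose vertex count is small relative to the threshold. The one point to verify is the identification $n(F)-2=k+c-2=\zeta$, which follows from $n(F)=k+c$ and the definition of $\zeta$ in~\eqref{eq:parameters}.
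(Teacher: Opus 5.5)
Your proposal is correct and follows the paper's argument exactly: identify $n(F)-2=k+c-2=\zeta$, use $n\ge2k+1>2k\ge k+c=n(F)$ from the standing range~\eqref{eq:order-interval}, and combine $m(D)>g_k(n)\ge\zeta n$ to apply Theorem~\ref{thm:weak}.
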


\begin{proof}
We have $n(F)=k+c\le2k<n$ and
$m(D)>g_k(n)\ge(n(F)-2)n$. Apply Theorem~\ref{thm:weak}.
\end{proof}

\begin{lemma}\label{lem:residual-parameters}
Suppose $\zeta n>g_k(n)$, and put
$\eta=(t+1)(t+2)/(2k+t)$. Then
\[
 x\le t-\lfloor\eta\rfloor,
 \qquad 1\le t\le\left\lceil\frac{k}{2}\right\rceil-1.
\]
In particular, if $t=1$, then $x=1$, so $P$ is a two-arc antidirected tree.
\end{lemma}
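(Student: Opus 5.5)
The plan is to turn the hypothesis $\zeta n>g_k(n)$ into a linear inequality in $x$, using only the lower bound $g_k(n)\ge g_{\rm cl}(k)=(2k-1)(2k-2)$. We do not need to know which branch of the maximum in~\eqref{eq:branches} is active, which matters because for odd $k$ the top value $n=\lceil5k/2-1\rceil$ can lie in the linear range. Since $\zeta=2k-x-2$ and $n=2k+t$ by~\eqref{eq:parameters}, the hypothesis gives
\[
 (2k-2-x)(2k+t)>(2k-1)(2k-2).
\]

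Next I rearrange. Subtracting $(2k-2)(2k-1)$ from $(2k-2)(2k+t)$ leaves $(2k-2)(t+1)$, so the inequality becomes
\[
 x(2k+t)<(2k-2)(t+1)=(2k+t)(t+1)-(t+1)(t+2).
\]
Dividing by $2k+t>0$ yields $x<t+1-\eta$. Since $x$ is an integer, this gives $x\le\lceil t+1-\eta\rceil-1=t-\lfloor\eta\rfloor$. I would check this last step separately in two cases. If $\eta$ is an integer, strictness gives $x\le t-\eta$. Otherwise $x\le\lfloor t+1-\eta\rfloor=t-\lfloor\eta\rfloor$.

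The range of $t$ comes directly from~\eqref{eq:order-interval}. The lower end $n\ge2k+1$ gives $t\ge1$. The upper end gives
\[
 t\le\lceil5k/2-1\rceil-2k=\lceil k/2-1\rceil=\lceil k/2\rceil-1.
\]

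Finally, suppose $t=1$. By~\eqref{eq:mixed-forest}, $x\ge c_P\ge1$, while the bound just proved gives $x\le1-\lfloor\eta\rfloor\le1$. Hence $x=1$, which forces $c_P=1$ and $\varepsilon=x-c_P=0$. Then $p=2x-\varepsilon=2$, so $P$ is a single component with two arcs, that is, a two-arc antidirected tree.

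No step here is a real obstacle. The only points needing care are the integer-rounding step and remembering that $g_k(n)$ is merely bounded below by $g_{\rm cl}(k)$ rather than equal to it.
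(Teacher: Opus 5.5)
Your proof is correct and follows essentially the same route as the paper: bound $g_k(n)$ below by $g_{\rm cl}(k)$, rearrange to $x<t+1-\eta$, use integrality of $x$ to get $x\le t-\lfloor\eta\rfloor$, read the range of $t$ off the order interval, and for $t=1$ combine $x\ge c_P\ge1$ with $x\le1$ to force $c_P=1$ and $p=2$. The only cosmetic difference is that you use $\lfloor\eta\rfloor\ge0$ where the paper observes $\eta=6/(2k+1)<1$, and either is sufficient.
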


\begin{proof}
Since $g_k(n)\ge g_{\rm cl}(k)$ and $\zeta=2k-x-2$,
\[
 2k-x-2>\frac{(2k-1)(2k-2)}{2k+t}
 =2k-t-3+\frac{(t+1)(t+2)}{2k+t}.
\]
Thus $x<t+1-\eta$, which gives the asserted integer bound.
The upper bound on $t$ follows from~\eqref{eq:order-interval}.
For $t=1$, $k\ge3$ and $\eta=6/(2k+1)<1$, so $1\le x\le1$.
As $x=p-c_P$ and $p\ge2c_P\ge2$, this forces $c_P=1$ and $p=2$.
\end{proof}

\begin{lemma}\label{lem:degree-consequences}
Suppose $F\not\subseteq D$. Then, for every $v\in V(D)$ and either
sign $\sigma$,
\begin{equation}\label{eq:degree-consequences}
 d_D^\sigma(v)\le \zeta,\qquad
 \Delta(D)\ge n+k-2,\qquad
 n\le k+2c-2.
\end{equation}
In particular, $2x\le k-t-2$.
\end{lemma}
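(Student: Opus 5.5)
The three inequalities in \eqref{eq:degree-consequences} follow from Propositions~\ref{prop:large-one-sided} and~\ref{prop:max-degree} by contraposition, and the final bound $2x\le k-t-2$ is then arithmetic. Throughout, assume $F\not\subseteq D$. By \eqref{eq:order-interval} we have $n\ge2k+1$, and by \eqref{eq:mixed-forest} $F$ has a single-arc component, so both propositions apply under \IH.

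\emph{First inequality.} If some vertex $v$ and sign $\sigma$ had $d_D^\sigma(v)\ge k+c-1$, Proposition~\ref{prop:large-one-sided} would give $F\subseteq D$. Hence $d_D^\sigma(v)\le k+c-2=\zeta$ for every $v$ and every $\sigma$.

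\emph{Second inequality.} If $\Delta(D)\le n+k-3$, Proposition~\ref{prop:max-degree} would embed $F$. Hence $\Delta(D)\ge n+k-2$.

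\emph{Third inequality.} Take a vertex $v$ with $d_D(v)=\Delta(D)$. By the first inequality applied to both signs, $d_D(v)=d_D^+(v)+d_D^-(v)\le2\zeta$. Therefore $n+k-2\le\Delta(D)\le2(k+c-2)$, which rearranges to $n\le k+2c-2$.

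\emph{The consequence $2x\le k-t-2$.} Substitute $c=k-x$ and $n=2k+t$ into $n\le k+2c-2$. This gives $2k+t\le k+2(k-x)-2=3k-2x-2$, that is, $2x\le k-t-2$.

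No step is a genuine obstacle. The only points to check are the hypotheses of the two propositions: $n\ge2k$ holds, a $\dK$ component exists because $\rho\ge1$, and \IH\ is assumed throughout this section.
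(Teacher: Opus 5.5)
Your proposal is correct and follows the paper's own proof: the first two bounds are the contrapositives of Propositions~\ref{prop:large-one-sided} and~\ref{prop:max-degree}, and $n+k-2\le\Delta(D)\le2\zeta$ with the substitutions $n=2k+t$, $c=k-x$ gives the rest. Your explicit check that $\rho\ge1$ and $n\ge2k$ supply the propositions' hypotheses is a detail the paper leaves implicit.
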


\begin{proof}
The first two inequalities follow from Propositions~\ref{prop:large-one-sided}
and~\ref{prop:max-degree}. Hence
$n+k-2\le\Delta(D)\le2\zeta=2k+2c-4$, giving the third inequality.
Substitute $n=2k+t$ and $c=k-x$ for the final assertion.
\end{proof}

Put
\begin{equation}\label{eq:minimum-arcs}
 m_0=g_k(n)+1,\qquad \lambda=n\zeta-m_0,
 \qquad \xi=m_0-2\zeta n_P+p-g_\rho(n-n_P).
\end{equation}
Whenever $\zeta n>g_k(n)$, the integer $\lambda$ is nonnegative.
Under~\eqref{eq:degree-consequences}, $m_0\le m(D)\le n\zeta$ as well.

\begin{proposition}\label{prop:delete-P}
Suppose~\eqref{eq:degree-consequences} holds. If $\xi>0$, then $F\subseteq D$.
\end{proposition}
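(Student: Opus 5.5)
We embed $P$ first and then apply \IH\ to the rest. The quantity $\xi$ compares $m(D)$ with what is lost by deleting the image of $P$ and with the threshold $g_\rho$ for the remaining $\rho$ single arcs. If we find a copy of $P$ on a vertex set $W$ with $|W|=n_P$, then by~\eqref{eq:deletion}
\[
 m(D-W)=m(D)-\sum_{w\in W}d_D(w)+m(D[W]).
\]
By~\eqref{eq:degree-consequences} each $w$ has $d_D^+(w),d_D^-(w)\le\zeta$, so $\sum_{w\in W}d_D(w)\le2\zeta n_P$. The copy of $P$ lies inside $D[W]$, so $m(D[W])\ge p$. Hence
\[
 m(D-W)\ge m_0-2\zeta n_P+p=\xi+g_\rho(n-n_P)>g_\rho(n-n_P).
\]
Since $\rho\ge1$ and $\rho<k$ by~\eqref{eq:mixed-forest}, the pair $(\rho,n-n_P)$ precedes $(k,n)$ lexicographically. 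So \IH\ applied to the forest $\rho\dK\in\mathcal F_\rho$ gives $\rho$ disjoint arcs in $D-W$ (for $\rho=1$ this is just a nonempty digraph). Together with the copy of $P$, this is a copy of $F$.

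It remains to show that $P\subseteq D$. I would use Lemma~\ref{lem:no-single-arcs} applied to $P\in\mathcal F_p$, with $p<k$ since $\rho\ge1$; it needs $m(D)>g_p(n)$. Because $g_p(n)$ is nondecreasing in $p$ (both branches are), $g_p(n)\le g_k(n)<m(D)$. Lemma~\ref{lem:no-single-arcs} is stated for general $k$ and uses only Theorem~\ref{thm:weak}, so no induction is needed. Alternatively, Theorem~\ref{thm:weak} applies directly, since $n(P)-2\le 3k/2-2$ and $m(D)>g_k(n)\ge(3k/2-2)n$, which was checked in that lemma's proof.

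I expect the main obstacle to be the edge cases. First, $g_\rho(n-n_P)$ must be used with the convention $g_1=0$ and with the small-order behaviour of Lemma~\ref{lem:small-order}. If $n-n_P<2\rho$, then \IH\ cannot hold with strict inequality, and indeed $m(D-W)\le g_\rho(n-n_P)$ automatically. So $\xi>0$ itself rules this case out and no separate argument is needed; I would remark on this explicitly. Second, the bound $m(D[W])\ge p$ must count the arcs of $P$'s image, which are distinct arcs of $D$, so this is fine. Finally, the bound $\sum_{w\in W} d_D(w)\le 2\zeta n_P$ uses both one-sided bounds from~\eqref{eq:degree-consequences}, which are exactly what this proposition assumes.
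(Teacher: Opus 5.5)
Your proof is correct and follows the paper's argument. You embed $P$ via Lemma~\ref{lem:no-single-arcs} using $g_p(n)\le g_k(n)$, bound $m(D-W)$ through~\eqref{eq:deletion} and the one-sided degree bounds, and then find $\rho$ vertex-disjoint arcs in $D-W$. The only difference is the last step. The paper takes these arcs from Corollary~\ref{cor:matching}, which rests on Erd\H{o}s--Gallai and needs no induction; it also checks $(n-n_P)-2\rho=t+x>0$ explicitly. You instead invoke \IH\ for $\rho\dK$, which is equally valid since $\rho<k$. Your remark that the strict inequality already forces $n-n_P\ge2\rho$ is correct, so that explicit check is in fact redundant.
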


\begin{proof}
Since $p\le k$, $n\ge2k$ and
$\ex(n,pK_2)\le\ex(n,kK_2)$, we have $g_p(n)\le g_k(n)$.
Lemma~\ref{lem:no-single-arcs} gives a copy of $P$ in $D$; let $W$
be its vertex set. By~\eqref{eq:deletion},
\[
 m(D-W)\ge m(D)-2\zeta|W|+m(D[W])
 \ge m_0-2\zeta n_P+p>g_\rho(n-n_P).
\]
Also
\[
 (n-n_P)-2\rho=t+x>0.
\]
Corollary~\ref{cor:matching} therefore provides $\rho$ vertex-disjoint
arcs in $D-W$. Together with the copy of $P$, they give $F$.
\end{proof}

\subsection{Extending arcs to vertex-disjoint trees}\label{ss:extensions}

Assume~\eqref{eq:degree-consequences}. Fix a matching of arcs $M$ of
size $k$, which exists by Corollary~\ref{cor:matching}. For an integer
$1\le h\le \zeta$, define
\[
 \begin{split}
 S_h^\sigma&=\{v\in V(D):d_D^\sigma(v)<h\},\\
 M_h^+&=\{uw\in M:d_D^+(u)\ge h\},\qquad
 M_h^-=\{uw\in M:d_D^-(w)\ge h\},\\
 s(h)&=\frac{\lambda}{\zeta-h+1}.
 \end{split}
\]
The denominator is positive. Summing indegrees or outdegrees gives
\begin{equation}\label{eq:small-degrees}
 |S_h^\sigma|\le s(h),\qquad |M_h^\sigma|\ge k-s(h).
\end{equation}
Indeed, $m(D)\le(h-1)|S_h^\sigma|+\zeta(n-|S_h^\sigma|)$, while each arc
of $M\setminus M_h^\sigma$ has a distinct endpoint in $S_h^\sigma$.

\begin{proposition}[Components with two arcs]\label{prop:two-arc-components}
Suppose $\varepsilon=0$. If
\begin{equation}\label{eq:two-arc-conditions}
 \zeta\ge3c_P-1,\qquad
 (k-2c_P)(\zeta-3c_P+2)\ge \lambda,
\end{equation}
then $F\subseteq D$.
\end{proposition}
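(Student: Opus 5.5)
The plan is to exploit the very rigid shape of $F$ when $\varepsilon=0$. Then $p=2c_P$, so every component of $P$ is an antidirected tree with exactly two arcs. Such a tree is a two-arc star: a source with two out-neighbours, or a sink with two in-neighbours. Write $c_P=c_P^++c_P^-$, where $c_P^\sigma$ counts the stars whose centre has sign $\sigma$. Then $F$ consists of these $c_P$ stars together with $\rho=k-2c_P$ single arcs. I will build $F$ directly from the fixed matching of arcs $M$ of size $k$, with threshold $h=3c_P-1$. The first condition in~\eqref{eq:two-arc-conditions} says precisely that $1\le h\le\zeta$, so the sets $M_h^\pm$ and the bound~\eqref{eq:small-degrees} are available.

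\textbf{Step 1: enough high-degree arcs.} With $h=3c_P-1$ we have $\zeta-h+1=\zeta-3c_P+2>0$. The second condition in~\eqref{eq:two-arc-conditions} gives $s(h)=\lambda/(\zeta-3c_P+2)\le k-2c_P$. Hence~\eqref{eq:small-degrees} yields $|M_h^+|\ge2c_P$ and $|M_h^-|\ge2c_P$. First choose $c_P^+$ arcs from $M_h^+$. Then choose $c_P^-$ further, distinct arcs from $M_h^-$; this is possible because at least $2c_P-c_P^+\ge c_P^-$ arcs of $M_h^-$ remain. Call the chosen set $M_0$, so $|M_0|=c_P$.

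For each arc $uw\in M_0$ taken from $M_h^+$, the centre of the corresponding out-star is $u$, and we have $d_D^+(u)\ge h$. For each arc taken from $M_h^-$, the centre is $w$, and we have $d_D^-(w)\ge h$.

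\textbf{Step 2: greedy extension to stars.} Process the arcs of $M_0$ one at a time. For the current arc with centre $y$ and required sign $\sigma$, we must pick a vertex $z\in N_D^\sigma(y)$ that avoids two sets:
\begin{itemize}[nosep]
\item the $2c_P$ endpoints of arcs of $M_0$, excluding $y$ itself, which is not its own neighbour; this gives $2c_P-1$ vertices;
\item the at most $c_P-1$ leaves already added.
\end{itemize}
So at most $3c_P-2<h\le d_D^\sigma(y)$ vertices are forbidden, and such a $z$ exists. Adding the arc between $y$ and $z$ in direction $\sigma$ turns the arc of $M_0$ into a two-arc star with centre $y$ of the correct type. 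The resulting $c_P$ stars are pairwise vertex-disjoint, so together they form a copy of $P$.

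\textbf{Step 3: single arcs.} Consider the arcs of $M\setminus M_0$; there are $k-c_P$ of them. Each new leaf $z$ lies on at most one of these arcs, so at most $c_P$ of them meet the stars. Discard those arcs. At least $k-2c_P=\rho$ arcs of $M$ remain, and they are disjoint from the stars and from each other. These arcs supply the $\rho$ single-arc components, completing a copy of $F$.

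The only delicate point is Step~1. Both signs must be served from distinct arcs of $M$, and the vertex count in Step~2 must stay strictly below $h$. The choice $h=3c_P-1$ is exactly what reconciles these with the hypotheses~\eqref{eq:two-arc-conditions}. A second thing to check is that a leaf landing on an arc of $M$ costs only that one arc, which is why $|M|=k$ leaves precisely enough room.
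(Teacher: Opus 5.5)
Your proof is correct and follows the paper's argument. You use the same threshold $h=3c_P-1$, the same bound $|M_h^\pm|\ge 2c_P$ from \eqref{eq:small-degrees}, the same construction of each two-arc star on an arc of $M$ plus one extra leaf, and the same count that leaves $\rho$ arcs of $M$ free. The only difference is bookkeeping: you fix all $c_P$ arcs in advance and forbid all their endpoints, whereas the paper picks each arc from $M_h^\sigma\setminus M(W)$ as it goes; in both versions at most $3c_P-2<h$ vertices are forbidden.
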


\begin{proof}
Put $h=3c_P-1$. By~\eqref{eq:small-degrees} and
\eqref{eq:two-arc-conditions}, both $M_h^+$ and $M_h^-$ have at least
$2c_P$ arcs. Every component of $P$ is an antidirected tree on three
vertices, with either a source or a sink as its centre.

Construct disjoint copies of these $c_P$ components successively.
Suppose that $j-1$ components have been embedded and let $W$ be the set
of their $3(j-1)$ images. Each such copy contains an arc of $M$ and
one further vertex, so $|M(W)|\le2(j-1)$. Choose an arc
$uw\in M_h^\sigma\setminus M(W)$, where $\sigma$ is the sign of the
centre of the next component. Such an arc exists since
$2c_P-2(j-1)\ge2$.
If $\sigma=+$, map the centre to $u$ and one leaf to $w$; if
$\sigma=-$, map the centre to $w$ and one leaf to $u$.
In the corresponding neighbourhood of the centre image, at most
$3(j-1)+1$ vertices are forbidden: the vertices in $W$ and the image
of the first leaf. Since
$h=3c_P-1\ge3(j-1)+2$, there is a distinct image for the other leaf.

The resulting copy of $P$ has vertex set $W'$ satisfying
$|M(W')|\le2c_P=p$. Hence at least $k-p=\rho$ arcs of $M$ lie in
$D-W'$, and these complete the embedding of $F$.
\end{proof}

\begin{proposition}[Components with at least three arcs]\label{prop:larger-components}
Suppose $\varepsilon\ge1$. Set
$\Phi=3c_P+\varepsilon+2$. Suppose that an integer $\Lambda$ satisfies
$1\le\Lambda\le \zeta$ and
\begin{align}
 \Lambda&\ge s(\Lambda)+4\varepsilon+1,\label{eq:extension-i}\\
 \zeta&\ge\Phi,\label{eq:extension-ii}\\
 k-s(\Theta)&\ge p,\qquad \Theta=\max\{\Lambda,\Phi\}.
 \label{eq:extension-iii}
\end{align}
Then $F\subseteq D$.
\end{proposition}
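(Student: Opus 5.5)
The plan is to follow the proof of Proposition~\ref{prop:two-arc-components}: embed the components of $P$ one at a time, each anchored on an arc of $M$, so that the final copy $W'$ of $P$ meets at most $p$ arcs of $M$. Then at least $k-p=\rho$ arcs of $M$ survive in $D-W'$ and complete $F$. Throughout I use the identities $n_P=p+c_P=3c_P+\varepsilon$ and $\Phi=n_P+2$. Hence any vertex whose relevant semidegree is at least $\Phi$ still has a free neighbour of the required direction while any part of $P$ is being embedded. I also use~\eqref{eq:small-degrees} at the two levels $\Lambda$ and $\Theta$. By~\eqref{eq:extension-iii}, $|M_\Theta^\sigma|\ge k-s(\Theta)\ge p$ for both signs. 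Moreover $|S_\Lambda^\sigma|\le s(\Lambda)$, and also $|S_\Theta^\sigma|\le s(\Theta)$.

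For each component $T$ of $P$, with $q\ge2$ arcs, I fix an arc $ab$ of $T$ whose endpoint $a$ has maximal degree in $T$. I then root $T$ at $a$ and order its vertices in breadth-first order from $a$. In the greedy step, the current component is placed on an arc $uw\in M_\Theta^{\sigma}\setminus M(W)$, where $\sigma=\sigma_T(a)$ and $W$ is the set of vertices already used. Such an arc exists because each earlier component $T'$ consumes at most $m(T')$ arcs of $M$: its image has $m(T')+1$ vertices, and the anchor arc itself lies in $M$. So fewer than $p\le|M_\Theta^\sigma|$ arcs are used before the last component. The anchor $a$ is mapped to the endpoint of degree at least $\Theta\ge\Phi$, and $b$ to the other endpoint. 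The remaining children of $a$ are then chosen greedily from $N^\sigma(a)$, which has more than $n_P$ vertices, so there is always room.

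The remaining vertices are handled by a degree-classification step. Every non-leaf vertex $y\ne a$ of $T$ must be mapped to a vertex whose $\sigma_T(y)$-semidegree is at least $\Lambda$, so that its own children can later be found. I bound the number of such vertices using $\varepsilon$. A component with $q$ arcs has $q+1$ vertices, and at most $q-1=(q-2)+1$ of them are non-leaves. Summing over components and subtracting the anchor, the number of non-anchor internal vertices is at most about $2\varepsilon$. The same counting bounds the number of children these vertices require by about $2\varepsilon$, so the vertices placed outside anchor neighbourhoods number at most $4\varepsilon$. When I choose the image of such an internal vertex $y$ inside the neighbourhood of its parent's image, I must avoid three things: the vertices of $S_\Lambda^{\sigma_T(y)}$ (at most $s(\Lambda)$ of them), the previously placed vertices that are not anchor vertices (at most $4\varepsilon$), and the parent itself. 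Condition~\eqref{eq:extension-i}, $\Lambda\ge s(\Lambda)+4\varepsilon+1$, is exactly what guarantees a valid choice, provided the parent's image already has semidegree at least $\Lambda$. Leaves are placed freely, using semidegree at least $\Lambda$ for non-anchor parents and at least $\Phi$ for anchors. Condition~\eqref{eq:extension-ii} ensures $\Phi\le\zeta$, so none of the thresholds is vacuous.

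The main obstacle is the accounting in the previous step. Vertices already used in earlier components that are anchors or anchor-leaves must not be counted against the $4\varepsilon$ budget. For this I will separate forbidden vertices into two kinds: those lying in $M(W)$-arcs or attached to anchors, which matter only for $\Phi$-sized neighbourhoods, and the genuinely extra internal vertices, which must fit in the $\Lambda$-budget. If this split does not close, I will first try to prove the sharper estimate that non-anchor internal vertices together with their children number at most $4\varepsilon$, by choosing $a$ so that $T-a$ has components of small size. Once every component is embedded, the vertex set $W'$ satisfies $|M(W')|\le p$, and the surviving $\rho$ arcs of $M$ finish the copy of $F$.
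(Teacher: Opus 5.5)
\textbf{There is a genuine gap.} Your overall architecture matches the paper's: anchor each component on an arc of $M_\Theta^\sigma$ outside $M(W)$, keep $|M(W')|\le p$, and finish with the surviving $\rho$ arcs. However, the greedy step for the internal vertices does not close as you have set it up, and the ``split'' you propose to rescue it is not valid.

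\emph{Injectivity forbids every used vertex.} When you choose the image of a non-anchor vertex $y$, or of a non-leaf child of an anchor, inside the neighbourhood of its parent's image, you must avoid \emph{every} previously used vertex, whatever role it played in its own component. Anchors and anchor-leaves of earlier components do not ``matter only for $\Phi$-sized neighbourhoods.''

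\emph{Why the hypotheses do not suffice in arbitrary order.} You embed components in no particular order, so $W$ may already contain up to $3c_P$ vertices of two-arc components. You would then need something like $\Lambda>|W|+s(\Lambda)$, or $\Theta>|W|+s(\Lambda)$ for non-leaf children of the anchor. Neither follows from \eqref{eq:extension-i}--\eqref{eq:extension-iii}. Indeed, $\Theta=\max\{\Lambda,\Phi\}$ controls $|W|$ and $s(\Lambda)$ only separately, and $n_P=3c_P+\varepsilon$ can far exceed $4\varepsilon$. For the same reason, your claim that non-leaf children of $a$ can be chosen ``greedily, so there is always room'' is unjustified: they must also avoid $S_\Lambda$.

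\emph{The missing idea is the order of embedding.} Embed all components with at least three arcs \emph{first}. If $b\le\varepsilon$ is their number, their total order is $3b+\varepsilon\le4\varepsilon$. So throughout this phase $|W|<4\varepsilon$, and \eqref{eq:extension-i} leaves at least $\Lambda-4\varepsilon-s(\Lambda)\ge1$ admissible images for every new vertex. This includes children of the anchor, whose image has semidegree at least $\Theta\ge\Lambda$. This is what the $4\varepsilon$ budgets for; it is not a count of non-anchor internal vertices and their children. Only afterwards embed the two-arc components. These need only a second leaf in the centre's neighbourhood, which has size at least $\Theta\ge\Phi=n_P+2$ and contains at most $n_P-2$ forbidden vertices.

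\emph{A second, smaller defect is in the anchoring.} You take $a$ of maximal degree and an arbitrary arc $ab$, and send $b$ to the other endpoint of the chosen arc of $M$. Membership in $M_\Theta^\sigma$ controls only the semidegree of the endpoint receiving $a$. The other endpoint has no degree guarantee and was not chosen outside $S_\Lambda$.

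For example, if $T$ is a star with three subdivided edges, the unique maximum-degree vertex has no leaf neighbour, so $b$ is forced to be internal. Its children may then have no available image. Anchor instead on an arc $z\ell$ with $\ell$ a leaf, as the paper does. Since $T$ has at least two arcs, $z$ still has degree at least $2$ in $T$, so your counting of non-anchor internal vertices is unaffected.
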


\begin{proof}
Let $\mathcal Q$ be the set of components of $P$ with at least three
arcs, and put $b=|\mathcal Q|$. For $Q\in\mathcal Q$, write $q_Q=m(Q)$.
Then
\begin{equation}\label{eq:larger-orders}
 b\le\varepsilon,\qquad
 \sum_{Q\in\mathcal Q}q_Q=2b+\varepsilon,\qquad
 \sum_{Q\in\mathcal Q}n(Q)=3b+\varepsilon\le4\varepsilon.
\end{equation}
By~\eqref{eq:small-degrees} and~\eqref{eq:extension-iii}, each of
$M_\Theta^+$ and $M_\Theta^-$ has at least $p$ arcs.

Embed the components in $\mathcal Q$ first, followed by the two-arc
components. Each component $Q$ will contain a specified arc of $M$;
its remaining $q_Q-1$ vertices then meet at most $q_Q-1$ further arcs
of $M$. Thus a completed copy of $Q$ meets at most $q_Q$ arcs of $M$.
Before any new component is begun, the total number of arcs in completed
components is less than $p$. Therefore, if $W$ is the set of vertices
already used, $|M(W)|<p$ and either prescribed set $M_\Theta^\sigma$
contains an arc outside $M(W)$.

Consider $Q\in\mathcal Q$. Choose a leaf $\ell$ with neighbour $z$.
Take an arc $uw\in M_\Theta^{\sigma_Q(z)}\setminus M(W)$.
If $z$ is a source, set $\varphi(z)=u$ and $\varphi(\ell)=w$;
if $z$ is a sink, set $\varphi(z)=w$ and $\varphi(\ell)=u$.
In both cases $d_D^{\sigma_Q(z)}(\varphi(z))\ge\Lambda$.
Root the underlying tree at $z$ and order its other vertices so that
each parent precedes its children, with $\ell$ already embedded.

Suppose that $y$ is the next vertex and $a$ is its parent. The image
$\varphi(a)$ has degree at least $\Lambda$ in direction $\sigma_Q(a)$:
this holds initially for $z$ and is imposed whenever any further
non-leaf vertex is embedded. If $y$ is not a leaf, choose its image in
\begin{equation}\label{eq:extension-neighbourhood}
 N_D^{\sigma_Q(a)}(\varphi(a))
 \setminus\bigl(W\cup S_\Lambda^{\sigma_Q(y)}\bigr),
\end{equation}
where $W$ now includes all images chosen so far. If $y$ is a leaf,
omit the set $S_\Lambda^{\sigma_Q(y)}$ from the exclusion.
During the embedding of components in $\mathcal Q$, at most
$4\varepsilon$ vertices have been used, by~\eqref{eq:larger-orders}.
Thus~\eqref{eq:extension-i} and~\eqref{eq:small-degrees} show that the
set in~\eqref{eq:extension-neighbourhood} has size at least
$\Lambda-4\varepsilon-s(\Lambda)\ge1$.
Because adjacent vertices have opposite signs, every newly added arc
has the required direction. This constructs all copies of components
in $\mathcal Q$ with pairwise disjoint vertex sets.

For a remaining two-arc component, choose an arc of
$M_\Theta^\sigma\setminus M(W)$, where $\sigma$ is the sign of its
centre, and embed the centre and one leaf on this arc. The corresponding
neighbourhood of the centre image has size at least $\Theta\ge\Phi$.
The total order of $P$ is $n_P=3c_P+\varepsilon$; after placing this
arc, at most $n_P-1$ vertices have been used, including the centre.
At most $n_P-2$ vertices of the relevant neighbourhood are therefore
forbidden. Since $\Phi=n_P+2$, another leaf image exists outside all
previous images. Each such component meets at most two arcs of $M$.

Let $W'$ be the vertex set of the resulting copy of $P$. The estimates
above give
\[
 |M(W')|\le\sum_{Q\in\mathcal Q}q_Q+2(c_P-b)
 =2c_P+\varepsilon=p.
\]
Hence $M\setminus M(W')$ contains at least $k-p=\rho$ arcs, which
supply the remaining components of $F$.
\end{proof}

\begin{remark}\label{rmk:matching-intersections}
The copies of the components constructed above are vertex-disjoint,
and their specified arcs of $M$ are distinct. An additional arc of $M$
may nevertheless meet two different copies, one at each endpoint.
This does not affect the argument: $M(W')$ counts each such arc only
once, and its cardinality is at most the sum of the individual bounds.
No assumption is made that every vertex of $D$ is covered by $M$.
\end{remark}

\subsection{The numerical hypotheses}\label{ss:numerical}

It remains to verify the hypotheses of the two extension propositions.
Assume $\zeta n>g_k(n)$ and~\eqref{eq:degree-consequences}. The parameters
satisfy
\begin{equation}\label{eq:numerical-domain}
 \begin{gathered}
 x\ge1,\qquad 0\le\varepsilon\le x-1,\qquad k\ge2x+3,\\
 1\le t\le\min\left\{\left\lceil\frac{k}{2}\right\rceil-1,
 k-2x-2\right\},\qquad \lambda\ge0.
 \end{gathered}
\end{equation}
In particular, $\rho=k-2x+\varepsilon\ge\varepsilon+3$.
Put
\begin{equation}\label{eq:Psi}
 \Psi=\frac{5k}{2}\zeta-g_{\rm cl}(k)-1.
\end{equation}
The following bound accounts for the change between the two expressions
in~\eqref{eq:branches}.

\begin{lemma}\label{lem:R-upper}
Under~\eqref{eq:numerical-domain} and~\eqref{eq:order-interval},
\[
 0\le \lambda\le\Psi-\zeta=k^2-\frac52kx-k+x-1.
\]
\end{lemma}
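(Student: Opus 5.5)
The lower bound is already in hand. The hypotheses in~\eqref{eq:numerical-domain} include $\lambda\ge0$, which was noted after~\eqref{eq:minimum-arcs}: it holds because $\zeta n>g_k(n)$ and all quantities are integers. So the content of the lemma is the upper bound.

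For the upper bound, write $\lambda=n\zeta-g_k(n)-1$ and bound $g_k(n)$ from below by whichever branch of~\eqref{eq:branches} is convenient. I split according to the parity of $k$. The only difficulty is that $\lceil 5k/2-1\rceil$ may exceed $5k/2-1$ by $1/2$.

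\emph{Case 1: $k$ even, or more generally $n\le 5k/2-1$.} Use $g_k(n)\ge g_{\rm cl}(k)$ and $\zeta>0$. Then
\[
 \lambda\le\Bigl(\tfrac{5k}{2}-1\Bigr)\zeta-g_{\rm cl}(k)-1=\Psi-\zeta.
\]

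\emph{Case 2: $k$ odd and $n=\lceil 5k/2-1\rceil=(5k-1)/2$.} Here the naive estimate loses $\zeta/2$, so I switch to the linear branch. Since $2n-k=4k-1$,
\[
 g_k(n)\ge g_{\rm lin}(k,n)=(k-1)(4k-1)=g_{\rm cl}(k)+(k-1).
\]
Hence
\[
 \lambda\le\Bigl(\tfrac{5k}{2}-\tfrac12\Bigr)\zeta-g_{\rm cl}(k)-(k-1)-1
 =\Psi-\zeta+\Bigl(\tfrac{\zeta}{2}-(k-1)\Bigr).
\]
The bracket is nonpositive because $\zeta=2k-x-2\le 2k-2$, using $x\ge1$.

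Finally, verify the closed form by direct expansion, using $\zeta=2k-x-2$ and $g_{\rm cl}(k)=4k^2-6k+2$:
\[
 \Psi-\zeta=\Bigl(\tfrac{5k}{2}-1\Bigr)(2k-x-2)-(4k^2-6k+2)-1 .
\]
The first term expands to
\[
 5k^2-\tfrac52kx-7k+x+2 ,
\]
and subtracting $4k^2-6k+3$ gives $k^2-\tfrac52kx-k+x-1$, as claimed.

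The only step needing care is Case~2, the odd-$k$ endpoint where the ceiling matters. There the slack comes from the fact that at this order the linear branch already exceeds the clique branch by $k-1$.
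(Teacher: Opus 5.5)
Your proof is correct and follows the paper's argument. You split into $n\le 5k/2-1$, where you use the clique branch, and the odd-$k$ endpoint $n=(5k-1)/2$, where you use that the linear branch exceeds $g_{\rm cl}(k)$ by $k-1$, and then expand directly; the only cosmetic difference is that you bound the leftover term $\zeta/2-(k-1)$ by $0$, whereas the paper evaluates it exactly as $-x/2$.
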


\begin{proof}
If $n\le5k/2-1$, then $m_0=g_{\rm cl}(k)+1$ and
$\lambda=n\zeta-g_{\rm cl}(k)-1\le\Psi-\zeta$.
The only remaining possibility is that $k$ is odd and
$n=(5k-1)/2$. In that case
$g_k(n)=g_{\rm cl}(k)+(k-1)$, and
\[
 \lambda=\Psi-\frac \zeta2-(k-1)=\Psi-\zeta-\frac x2\le\Psi-\zeta.
\]
The displayed polynomial follows by expansion.
\end{proof}

\begin{lemma}\label{lem:conditions-zero}
If $\varepsilon=0$, the inequalities in~\eqref{eq:two-arc-conditions} hold.
\end{lemma}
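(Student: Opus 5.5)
The plan is to substitute $\varepsilon=0$ into the parameters~\eqref{eq:parameters}, which turns both inequalities in~\eqref{eq:two-arc-conditions} into explicit polynomial inequalities in $k$ and $x$. I would then check these against the constraints~\eqref{eq:numerical-domain} and the upper bound for $\lambda$ in Lemma~\ref{lem:R-upper}. With $\varepsilon=0$ we have $c_P=x$, $p=2x$ and $\zeta=2k-x-2$.

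\emph{First inequality.} The condition $\zeta\ge3c_P-1$ becomes $2k-x-2\ge3x-1$, that is, $2k\ge4x+1$. By~\eqref{eq:numerical-domain} we have $k\ge2x+3$, hence $2k\ge4x+6$, so this holds with room to spare.

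\emph{Second inequality.} First compute
\[
 \zeta-3c_P+2=2k-4x,
\]
so the left-hand side of the second condition is
\[
 (k-2c_P)(\zeta-3c_P+2)=(k-2x)(2k-4x)=2(k-2x)^2 .
\]
By Lemma~\ref{lem:R-upper}, it suffices to show
\[
 2(k-2x)^2\ge k^2-\tfrac52kx-k+x-1 .
\]
I would substitute $y=k-2x$, so that $k=y+2x$ and $y\ge3$ by~\eqref{eq:numerical-domain}. Expanding the right-hand side gives
\[
 y^2+\tfrac32xy-x^2-y-x-1 .
\]
The difference between the two sides is therefore
\[
 y^2-\tfrac32xy+x^2+y+x+1 .
\]
The quadratic part $y^2-\tfrac32xy+x^2$ is positive definite, since its discriminant is $\tfrac94-4<0$. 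The linear and constant terms are positive because $x,y\ge1$. Hence the difference is positive, and the second condition follows.

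There is no genuine obstacle here; the only step needing care is the expansion after the change of variables, where a sign error is easy to make. As a safeguard, I would check the bound at a sample point, for instance $x=1$ and $k=5$. There the left side is $2\cdot3^2=18$ and the right side is $25-12.5-5+1-1=7.5$, consistent with the inequality. The range restriction on $t$ is not needed for this lemma.
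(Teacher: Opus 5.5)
Your proof is correct and is essentially the paper's argument: you get the first inequality from $k\ge2x+3$, and you reduce the second via Lemma~\ref{lem:R-upper} to $2(k-2x)^2\ge\Psi-\zeta$, which you verify by the substitution $y=k-2x$ and the positive definiteness of $y^2-\tfrac32xy+x^2$, where the paper completes the square in $x$. Only your closing remark is slightly off: the bound on $t$ is used implicitly, because Lemma~\ref{lem:R-upper} relies on~\eqref{eq:order-interval}.
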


\begin{proof}
Here $c_P=x$ and $\rho=k-2x$. As $k\ge2x+3$,
\[
 \zeta-(3c_P-1)=2k-4x-1\ge5.
\]
The second inequality in~\eqref{eq:two-arc-conditions} is
$2(k-2x)^2\ge \lambda$. By Lemma~\ref{lem:R-upper}, it follows from the identity
\begin{equation}\label{eq:zero-square}
 \begin{split}
 2(k-2x)^2-(\Psi-\zeta)
 &=8\left(x-\frac{11k+2}{32}\right)^2
   +\frac{7k^2+84k+124}{128}>0.
 \end{split}
\end{equation}
\end{proof}

\begin{lemma}\label{lem:critical-line}
Assume~\eqref{eq:numerical-domain} and $\varepsilon\ge1$.
If $3x-10\varepsilon\le3$, then $\xi>0$.
Consequently, if $\xi\le0$, then $3x-10\varepsilon\ge4$.
\end{lemma}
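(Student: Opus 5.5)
We prove the first assertion directly; the second is its contrapositive, since $3x-10\varepsilon$ is an integer. Substituting~\eqref{eq:parameters} into~\eqref{eq:minimum-arcs} gives
\[
 n_P=3x-2\varepsilon,\quad p=2x-\varepsilon,\quad \rho=k-d\ \ (d:=2x-\varepsilon),\quad n-n_P=2k+t-3x+2\varepsilon,\quad \zeta=2k-x-2 .
\]
So $\xi$ is an explicit expression in $k,x,\varepsilon,t$, apart from the maximum hidden in $g_\rho(n-n_P)$. We split according to which branch of~\eqref{eq:branches} attains $g_\rho(n-n_P)$. In each case we bound $m_0=g_k(n)+1$ from below by the \emph{same} branch, $g_{\rm cl}(k)+1$ or $g_{\rm lin}(k,n)+1$. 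The leading terms then cancel, and only a polynomial inequality remains.

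\emph{Clique branch.} Put $f(y)=(2y-1)(2y-2)$. Then
\[
 f(k)-f(k-d)=8kd-4d^2-6d .
\]
This gives
\[
 \xi\ge 1+8kd-4d^2-5d-2(2k-x-2)(3x-2\varepsilon).
\]
The terms in $k$ combine to $8k(2x-\varepsilon)-4k(3x-2\varepsilon)=4kx$. Expanding the rest, we expect
\[
 \xi\ge 4kx-10x^2+12x\varepsilon-4\varepsilon^2+2x-3\varepsilon+1 .
\]
Since $x\ge1$, the right-hand side increases in $k$, so we may insert $k\ge2x+3$ from~\eqref{eq:numerical-domain}:
\[
 \xi\ge -2x^2+12x\varepsilon-4\varepsilon^2+14x-3\varepsilon+1 .
\]
Here the hypothesis $3x\le10\varepsilon+3$ enters. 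It gives
\[
 2x^2\le\tfrac{2}{3}x(10\varepsilon+3)=\tfrac{20}{3}x\varepsilon+2x,
\]
which leaves $\tfrac{16}{3}x\varepsilon-4\varepsilon^2+12x-3\varepsilon+1$. Using $\varepsilon\le x-1$, we get $\tfrac{16}{3}x\varepsilon\ge\tfrac{16}{3}\varepsilon^2$ and $12x\ge 12\varepsilon$, so this quantity is positive.

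\emph{Linear branch.} Here
\[
 g_\rho(n-n_P)=(\rho-1)\bigl(2(n-n_P)-\rho\bigr),
\]
and we use $m_0\ge(k-1)(2n-k)+1$ with $n=2k+t$. We compute the difference
\[
 (k-1)(2n-k)-(\rho-1)\bigl(2(n-n_P)-\rho\bigr)-2\zeta n_P+p
\]
as a polynomial in $k,t,x,\varepsilon$. Its dependence on $t$ is linear with coefficient $2(k-\rho)=2d>0$, so we may take $t=1$. Again we expect a term of order $kx$ that dominates, and we reduce to a quadratic in $x,\varepsilon$ using $k\ge2x+3$, $3x\le10\varepsilon+3$ and $\varepsilon\le x-1$, exactly as in the clique case.

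The main obstacle is this final quadratic estimate, especially in the linear branch. The sign of the $x^2$ coefficient after substituting $k\ge2x+3$ is delicate there. If a negative $-x^2$ term survives, I would first try to absorb it through the critical-line constraint, bounding $x^2\le x(10\varepsilon+3)/3$, together with $\varepsilon\le x-1$ to control $\varepsilon^2$. If that is too weak, I would instead use the sharper constraint $2x\le k-t-2$ from Lemma~\ref{lem:degree-consequences} to keep more of the $kx$ term. Finally, since $\xi$ is an integer expression, it suffices to show $\xi>0$, which is what both estimates give.
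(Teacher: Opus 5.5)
\textbf{Verdict: genuine gap, in the linear branch.} Your clique branch is correct. When $g_\rho(n-n_P)=g_{\rm cl}(\rho)$, the bound $\xi\ge 4kx-10x^2+12x\varepsilon-4\varepsilon^2+2x-3\varepsilon+1$ and your subsequent estimates go through.

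The linear branch fails because you bound $m_0$ below by $g_{\rm lin}(k,n)+1$. In the range of Section~\ref{sec:interval}, $g_k(n)=g_{\rm cl}(k)$ except possibly at one endpoint. Moreover $g_{\rm cl}(k)-g_{\rm lin}(k,n)=(k-1)(k-2-2t)$, which is of order $k^2$ unless $t$ is close to $k/2$, and you cannot afford that loss. Carrying out your computation, the linear-branch expression is
\[
 1-2k(x-\varepsilon)-2x^2+6x\varepsilon-3\varepsilon^2+14x-8\varepsilon+2(2x-\varepsilon)(t-1).
\]
At $t=1$ the coefficient of $k$ is $-2(x-\varepsilon)<0$. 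So there is no dominating positive $kx$ term, and substituting $k\ge2x+3$ goes in the wrong direction.

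Using more constraints does not rescue it. You might add the lower bound on $t$ forced by $\lambda\ge0$, the condition that the linear branch of $g_\rho$ actually attains, or $2x\le k-t-2$. Take $k=120$, $x=20$, $\varepsilon=6$, $t=23$:
\begin{itemize}
\item all of \eqref{eq:numerical-domain} holds, with $\lambda=451$ and $3x-10\varepsilon=0$;
\item $g_{86}(215)=g_{\rm lin}(86,215)=29240>g_{\rm cl}(86)$, so the linear branch attains;
\item your bound gives $48314+1-20928+34-29240=-1819$, whereas the true value is $\xi=6749$.
\end{itemize}
So no further estimate can complete your linear-branch argument.

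\textbf{Missing idea.} Use $m_0\ge g_{\rm cl}(k)+1$ in every case. Since $g_\rho$ is nondecreasing in its argument, $\xi$ is then nonincreasing in $t$. The worst case is therefore the largest admissible $t$, not $t=1$.

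This is the paper's route:
\begin{itemize}
\item It replaces $t$ by $t^*=\min\{(k-1)/2,\,k-2x-2\}$.
\item It checks that $2n_*-5\rho+2>0$ for $n_*=2k+t^*-n_P$. Hence $g_\rho(n_*)$ is given by its linear expression, and it bounds $g_\rho(n-n_P)$ in both of your branches at once; your split becomes unnecessary.
\item This yields explicit quadratics $\xi_1$ (for $k\le4x+3$) and $\xi_2$ (for $k\ge4x+3$).
\item These are handled by monotonicity in $\varepsilon$, so the hypothesis enters as $\varepsilon\ge(3x-3)/10$. The rest follows from endpoint evaluation in $k$: concavity of $\xi_1$, and for $\xi_2$ monotonicity in $k$ with reduction to $\xi_1$ at $k=4x+3$.
\end{itemize}
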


\begin{proof}
A proof by explicit polynomial inequalities is given in
Appendix~\ref{app:critical-line}. It uses the larger real upper bound
$t\le(k-1)/2$ and is valid for both parities of $k$.
\end{proof}

\begin{lemma}\label{lem:quadratic}
Assume~\eqref{eq:numerical-domain} and $\varepsilon\ge1$.
Put $\beta=\zeta-4\varepsilon$. If $\xi\le0$, then $\beta>1$ and
\begin{equation}\label{eq:quadratic-gap}
 (\beta-1)^2>4\lambda.
\end{equation}
Consequently, if $(\zeta-4\varepsilon-1)^2<4\lambda$, then $\xi>0$.
\end{lemma}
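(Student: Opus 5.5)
The plan is to derive everything from Lemma~\ref{lem:critical-line} and Lemma~\ref{lem:R-upper}. We are not going to analyse $\xi$ directly.

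First, assume $\xi\le0$. Since $\varepsilon\ge1$, Lemma~\ref{lem:critical-line} gives $3x-10\varepsilon\ge4$. Two consequences follow:
\[
 \varepsilon\le\frac{3x-4}{10},\qquad x\ge\frac{10\varepsilon+4}{3}\ge\frac{14}{3},
\]
so $x\ge5$. Substituting the bound on $\varepsilon$ into $\beta=\zeta-4\varepsilon=2k-x-2-4\varepsilon$ gives
\[
 \beta-1\ \ge\ B:=2k-\frac{11x}{5}-\frac75 .
\]
Using $k\ge2x+3$ from~\eqref{eq:numerical-domain}, we get $B\ge\frac{9x}{5}+\frac{23}{5}>0$. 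In particular $\beta>1$, which is the first assertion.

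Next, since $\beta-1\ge B>0$, we have $(\beta-1)^2\ge B^2$. By Lemma~\ref{lem:R-upper},
\[
 4\lambda\le4(\Psi-\zeta)=4k^2-10kx-4k+4x-4 .
\]
So it suffices to prove the polynomial inequality $B^2>4k^2-10kx-4k+4x-4$. Expanding $B^2$, the $4k^2$ terms cancel. What remains is
\[
 \frac65kx-\frac85k+\frac{121}{25}x^2+\Bigl(\frac{154}{25}-4\Bigr)x+\frac{49}{25}+4 .
\]
Write the $k$-terms as $\frac{k}{5}(6x-8)$. This is nonnegative because $x\ge5$, and every other term is positive. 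Hence the difference is strictly positive, and~\eqref{eq:quadratic-gap} follows.

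Finally, the consequence is the contrapositive. If $(\zeta-4\varepsilon-1)^2<4\lambda$, then~\eqref{eq:quadratic-gap} fails, so $\xi>0$.

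The main point needing care is making sure Lemma~\ref{lem:critical-line} really forces $x\ge5$, which is what controls the sign of the $k$-coefficient. If the expansion turned out tighter than expected, the fallback would be to keep the exact $\lambda$ from the case split in Lemma~\ref{lem:R-upper}, which has an extra $-x/2$ in the odd case, and to use $k\ge2x+3$ once more to absorb the linear $k$-term. Beyond that step, everything is routine expansion of quadratics.
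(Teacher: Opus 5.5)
Your proposal is correct and follows the paper's proof essentially line by line. Your $B$ is the paper's $(10k-11x-7)/5$, obtained from Lemma~\ref{lem:critical-line} via $\varepsilon\le(3x-4)/10$, and your expanded difference $\frac{k}{5}(6x-8)+\frac{121}{25}x^2+\frac{54}{25}x+\frac{149}{25}$ is exactly the paper's $\bigl(10k(3x-4)+121x^2+54x+149\bigr)/25$, so the fallback you mention is never needed.
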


\begin{proof}
Suppose $\xi\le0$. By Lemma~\ref{lem:critical-line},
$\varepsilon\le(3x-4)/10$, so $x\ge14/3$ and
\[
 \beta-1=2k-x-4\varepsilon-3
 \ge\frac{10k-11x-7}{5}
 \ge\frac{9x+23}{5}>0.
\]
Using Lemma~\ref{lem:R-upper} and squaring these positive quantities,
\[
 \begin{split}
 (\beta-1)^2-4\lambda
 &\ge\left(\frac{10k-11x-7}{5}\right)^2-4(\Psi-\zeta)\\
 &=\frac{10k(3x-4)+121x^2+54x+149}{25}>0.
 \end{split}
\]
This proves~\eqref{eq:quadratic-gap}. The last assertion is its
contrapositive.
\end{proof}

\begin{lemma}\label{lem:integer-threshold}
Let $\zeta$ and $\varepsilon$ be integers with $\varepsilon\ge1$ and
$\zeta>4\varepsilon+1$, let $\lambda\ge0$, and put
$\beta=\zeta-4\varepsilon$. Suppose $(\beta-1)^2>4\lambda$. Then there is an integer
$\Lambda\in[1,\zeta]$ satisfying~\eqref{eq:extension-i}, where
$s(h)=\lambda/(\zeta-h+1)$ for $1\le h\le \zeta$.
Let $\Lambda_0$ be the least such integer, and put
\[
 \Delta_\beta=\beta^2-4\lambda,\qquad
 u_- =\frac{\beta-\sqrt{\Delta_\beta}}2,\qquad
 u_+ =\frac{\beta+\sqrt{\Delta_\beta}}2.
\]
Then
\begin{equation}\label{eq:integer-root}
 \zeta-\Lambda_0+1=\lfloor u_+\rfloor,
 \qquad u_+\ge \beta-\frac{2\lambda}{\beta}.
\end{equation}
If $\lambda>0$, one also has $\lfloor u_+\rfloor\ge\sqrt \lambda-1$.
\end{lemma}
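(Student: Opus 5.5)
The plan is to change variables to $u=\zeta-h+1$. As $h$ ranges over $[1,\zeta]$, $u$ ranges over $[1,\zeta]$, and $s(h)=\lambda/u$. Condition~\eqref{eq:extension-i}, namely $h\ge s(h)+4\varepsilon+1$, becomes
\[
 \zeta-u+1\ge \frac{\lambda}{u}+4\varepsilon+1
 \quad\Longleftrightarrow\quad
 u^2-\beta u+\lambda\le0,
\]
after multiplying by $u>0$ and using $\beta=\zeta-4\varepsilon$. So the admissible $u$ are exactly the real numbers in $[u_-,u_+]$; the discriminant $\Delta_\beta=\beta^2-4\lambda$ is positive because $(\beta-1)^2>4\lambda$ and $\beta>1$. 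Minimising $\Lambda$ is the same as maximising $u$. Hence $\Lambda_0$ corresponds to the largest integer $u\in[u_-,u_+]\cap[1,\zeta]$, and the first task is to show that this integer is $\lfloor u_+\rfloor$.

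For this I check three things.

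First, $u_+\le\beta\le\zeta$, since $\sqrt{\Delta_\beta}\le\beta$. So $\lfloor u_+\rfloor\le\zeta$.

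Second, $\lfloor u_+\rfloor\ge1$. The hypothesis $(\beta-1)^2>4\lambda$ gives $\sqrt{\Delta_\beta}=\sqrt{\beta^2-4\lambda}>\sqrt{\beta^2-(\beta-1)^2}=\sqrt{2\beta-1}\ge1$, so $u_+>(\beta+1)/2>1$.

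Third, $\lfloor u_+\rfloor\ge u_-$. This is the step needing the hypothesis. It suffices that $u_+-u_-=\sqrt{\Delta_\beta}\ge1$, which holds because $\Delta_\beta>2\beta-1\ge1$; then the closed interval $[u_-,u_+]$ has length at least $1$ and so contains the integer $\lfloor u_+\rfloor$.

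Thus $u=\lfloor u_+\rfloor$ is admissible, existence follows, and $\zeta-\Lambda_0+1=\lfloor u_+\rfloor$. The main obstacle is making sure that this maximal integer lies in the domain, and the hypothesis $(\beta-1)^2>4\lambda$ is used precisely to guarantee interval length at least one and positivity.

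For the inequality $u_+\ge\beta-2\lambda/\beta$, I use $u_+=\beta-u_-$ and $u_-=2\lambda/u_+$ (product of the roots equals $\lambda$). Since $u_+\ge\beta/2$, we get $u_-\le 2\lambda/\beta$. Equivalently, $\sqrt{\beta^2-4\lambda}\ge\beta-4\lambda/\beta$: this is immediate if the right side is negative, and otherwise follows by squaring, which gives $\beta^2-4\lambda\ge\beta^2-8\lambda+16\lambda^2/\beta^2$, i.e.\ $4\lambda\ge16\lambda^2/\beta^2$, i.e.\ $\beta^2\ge4\lambda$.

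Finally, if $\lambda>0$: from $u_+u_-=\lambda$ and $u_-\le u_+$ we get $u_+\ge\sqrt\lambda$, so $\lfloor u_+\rfloor>u_+-1\ge\sqrt\lambda-1$.
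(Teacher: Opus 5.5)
Your proposal is correct and follows essentially the paper's argument: the same substitution $u=\zeta-\Lambda+1$ giving $u^2-\beta u+\lambda\le0$, the same root gap $\sqrt{\Delta_\beta}>\sqrt{2\beta-1}\ge1$ to place $\lfloor u_+\rfloor$ in $[u_-,u_+]\cap[1,\zeta]$, and $u_-u_+=\lambda$ for the final bound, with your squaring check of $u_+\ge\beta-2\lambda/\beta$ equivalent to the paper's use of $\sqrt{1-z}\ge1-z$. One slip: the product of the roots gives $u_-=\lambda/u_+$, not $2\lambda/u_+$; with the correct formula, $u_+\ge\beta/2$ yields exactly your bound $u_-\le2\lambda/\beta$.
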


\begin{proof}
For an integer $1\le\Lambda\le \zeta$, set $u=\zeta-\Lambda+1$.
Since $u>0$, inequality~\eqref{eq:extension-i} is equivalent to
\[
 u^2-\beta u+\lambda\le0.
\]
The roots are real, satisfy $0\le u_-\le u_+\le \beta<\zeta$, and have
distance $\sqrt{\Delta_\beta}>\sqrt{2\beta-1}>1$.
If $\lambda>0$, the interval $[u_-,u_+]$ therefore contains a positive integer.
If $\lambda=0$, it is $[0,\beta]$, and $u=1$ suffices.
The largest permissible integer $u$ is $\lfloor u_+\rfloor$, giving
the first equality in~\eqref{eq:integer-root}.

Since $0\le4\lambda/\beta^2\le1$, the inequality
$\sqrt{1-z}\ge1-z$ for $0\le z\le1$ gives
\[
 u_+=\frac \beta2\left(1+\sqrt{1-\frac{4\lambda}{\beta^2}}\right)
 \ge \beta-\frac{2\lambda}{\beta}.
\]
Finally, $u_-u_+=\lambda$ and $u_-\le u_+$ imply $u_+\ge\sqrt \lambda$,
so $\lfloor u_+\rfloor\ge\sqrt \lambda-1$.
\end{proof}

\begin{lemma}\label{lem:conditions-positive}
Assume~\eqref{eq:numerical-domain}, $\varepsilon\ge1$ and $\xi\le0$.
There is an integer $\Lambda\in[1,\zeta]$ satisfying
\eqref{eq:extension-i}--\eqref{eq:extension-iii}.
\end{lemma}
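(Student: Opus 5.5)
The plan is to take $\Lambda=\Lambda_0$, the least integer supplied by Lemma~\ref{lem:integer-threshold}, and then verify \eqref{eq:extension-ii} and \eqref{eq:extension-iii} numerically. Since $\xi\le0$, Lemma~\ref{lem:quadratic} gives $\beta=\zeta-4\varepsilon>1$ and $(\beta-1)^2>4\lambda$. Lemma~\ref{lem:critical-line} gives $3x-10\varepsilon\ge4$, hence $x\ge14/3$. To apply Lemma~\ref{lem:integer-threshold} we also need $\zeta>4\varepsilon+1$, which is just $\beta>1$. So $\Lambda_0\in[1,\zeta]$ exists and satisfies \eqref{eq:extension-i}, with $\zeta-\Lambda_0+1=\lfloor u_+\rfloor$.

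Next I check \eqref{eq:extension-ii}. Here $\Phi=3c_P+\varepsilon+2=3x-2\varepsilon+2$ and $\zeta=2k-x-2$. Using $k\ge2x+3$ from \eqref{eq:numerical-domain}, we get $\zeta-\Phi\ge 4x+6-x-2-3x+2\varepsilon-2=2\varepsilon+2>0$. This step is routine.

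The main work is \eqref{eq:extension-iii}, namely $s(\Theta)\le k-p=\rho$. Since $s$ is increasing in $h$, we have $s(\Theta)=\max\{s(\Lambda_0),s(\Phi)\}$, and I treat the two terms separately.

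For $s(\Phi)=\lambda/(\zeta-\Phi+1)$, the bound $\zeta-\Phi+1\ge 2k-4x+2\varepsilon-3$ holds. So it suffices that $\lambda\le\rho(2k-4x+2\varepsilon-3)$ with $\rho=k-2x+\varepsilon$. I would compare this with the bound $\lambda\le\Psi-\zeta=k^2-\tfrac52kx-k+x-1$ of Lemma~\ref{lem:R-upper}. The right side is roughly $2(k-2x)^2$, and the comparison is analogous to identity \eqref{eq:zero-square}: complete the square in $x$, keeping the $\varepsilon$-terms, which only help.

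For $s(\Lambda_0)=\lambda/\lfloor u_+\rfloor$, I use $\lfloor u_+\rfloor> u_+-1\ge \beta-1-2\lambda/\beta$ from \eqref{eq:integer-root}. With $(\beta-1)^2>4\lambda$, this gives $\lfloor u_+\rfloor\gtrsim \beta/2$. It then suffices to prove $2\lambda\le\rho(\beta-O(1))$. The most convenient concrete target is
\[
\lambda\le\rho\Bigl(\beta-1-\frac{2\lambda}{\beta}\Bigr),
\]
which I would verify using the bounds $\lambda\le\Psi-\zeta$, $\beta-1\ge(10k-11x-7)/5$, $\rho\ge k-2x+1$, $\varepsilon\le(3x-4)/10$ and $x\le(k-3)/2$. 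This is a two-variable polynomial inequality in $(k,x)$ after $\varepsilon$ is eliminated monotonically. The main obstacle is this final inequality. Its margin is thinnest when $x$ is near $(k-3)/2$, where $\rho$ is only about $\varepsilon+3$ while $\lambda$ is still of order $k^2/4$.

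If the crude route fails there, I would instead use the exact value $\lambda\le n\zeta-g_{\rm cl}(k)-1$ together with $t\le k-2x-2$. This shows that $\lambda$ is small when $x$ is large: substituting $n=2k+t$, the identity $\lambda=\zeta(t+x+1)-\ldots$ gives roughly $\lambda\lesssim\zeta(t+1)-(t+1)(t+2)$. Combined with $\lfloor u_+\rfloor\ge\beta-1-2\lambda/\beta$, this yields $s(\Lambda_0)\le t+O(1)\le\rho$. I would carry out these polynomial checks in the appendix style, separating the regimes by whether $\lambda\le\beta$.
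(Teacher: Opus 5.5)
You have the paper's architecture: $\Lambda=\Lambda_0$; the check $\zeta-\Phi=2\rho-4\ge0$ for \eqref{eq:extension-ii}; the split $s(\Theta)=\max\{s(\Lambda_0),s(\Phi)\}$; the completed-square comparison of $\rho(2\rho-3)$ with $\Psi-\zeta$ for $s(\Phi)$; and the reduction, via $\lfloor u_+\rfloor\ge\beta-1-2\lambda/\beta$, of $s(\Lambda_0)\le\rho$ to $\lambda(\beta+2\rho)\le\rho\beta(\beta-1)$. That last inequality, however, is the real content of the lemma (it is Lemma~\ref{lem:cubic-positive}), and your verification plan fails.

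With $\lambda\le\Psi-\zeta$, both $\rho\beta(\beta-1)$ and $(\Psi-\zeta)(\beta+2\rho)$ equal $4k^3$ plus lower-order terms. The $k^2$-coefficient of their difference is exactly $3x-10\varepsilon-4$, and it is nonnegative precisely because of the coupled inequality $3x-10\varepsilon\ge4$ from Lemma~\ref{lem:critical-line}. You propose to bound $\rho$ using $\varepsilon\ge1$ and $\beta$ using $\varepsilon\le(3x-4)/10$ separately. That turns the coefficient into $(14-3x)/5<0$ for every $x\ge5$, and the resulting inequality is false at admissible parameters. Take $(k,x,\varepsilon,t)=(500,20,1,249)$. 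Then \eqref{eq:numerical-domain} holds, $\xi=-66\le0$, and $\lambda=\Psi-\zeta=224519$. Your bounds give only $\rho\ge461$ and $\beta-1\ge954.6$, and $461\,(954.6-2\lambda/955.6)\approx223446<\lambda$. With the true value $\beta=974$ one gets about $236021$. Your cruder heuristic ``$2\lambda\le\rho(\beta-O(1))$'' also fails here, since $\rho\beta=449014<2\lambda$.

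You also misplace the thin regime. Near $x=(k-3)/2$ one has $\Psi-\zeta\approx-k^2/4<0$; at $k=2x+3$ it equals $-x^2+\tfrac72x+5<0$ for $x\ge5$. So that regime is vacuous. The tight regime is $x=o(k)$ with $t$ near $k/2$, where $\lambda$ is close to $\Psi-\zeta$ and $4\lambda/\beta^2$ is close to $1$. Your fallback via $t\le k-2x-2$ therefore targets the wrong regime, and its conclusion $s(\Lambda_0)\le t+O(1)$ is false. At the point above $\lfloor u_+\rfloor=599$, so $s(\Lambda_0)\approx374.8$ while $t=249$; the correct comparison is with $\rho=461$.

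The missing idea is to keep $\varepsilon$ coupled throughout. The paper writes $m_3=\rho\beta(\beta-1)-(\Psi-\zeta)(\beta+2\rho)$ as a quadratic in $k$ with leading coefficient $\alpha=3x-10\varepsilon-4\ge0$. It shows that the forward difference $\alpha(2k+1)-\tfrac72x^2+31\varepsilon x+11x+8\varepsilon+8$ is positive from $k=2x+3$ on. It then verifies $m_3(2x+3,x,\varepsilon)>0$ by an explicit factorisation.
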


\begin{proof}
By Lemma~\ref{lem:critical-line}, $3x-10\varepsilon\ge4$.
Because $x$ is an integer and $\varepsilon\ge1$, this gives $x\ge5$
and $k\ge13$. Also $\rho\ge\varepsilon+3\ge4$, so
\[
 \zeta-\Phi=2\rho-4\ge0.
\]
Thus~\eqref{eq:extension-ii} holds.
By Lemmas~\ref{lem:quadratic} and~\ref{lem:integer-threshold}, the least
integer $\Lambda_0$ satisfying~\eqref{eq:extension-i} exists.
Let $\Theta=\max\{\Lambda_0,\Phi\}$.
Since $1\le\Theta\le \zeta$ and $k-p=\rho$, it remains to prove
\begin{equation}\label{eq:matching-threshold}
 \lambda\le\rho(\zeta-\Theta+1).
\end{equation}

Suppose first that $\Lambda_0\le\Phi$. Then
$\zeta-\Theta+1=2\rho-3$. A direct completion of the square gives
\begin{equation}\label{eq:Gamma}
 \begin{split}
 \rho(2\rho-3)-\Psi
 &=8\left(x-\frac{11k+16\varepsilon-12}{32}\right)^2\\
 &\quad+\frac{7k^2+160k\varepsilon-248k+240}{128}>0.
 \end{split}
\end{equation}
Indeed, the numerator in the last term is at least
$7k^2-88k+240=7(k-13)^2+94(k-13)+279>0$.
As $\lambda\le\Psi-\zeta\le\Psi$, inequality~\eqref{eq:matching-threshold} follows.

Suppose now that $\Lambda_0>\Phi$. Put $\beta=\zeta-4\varepsilon$ and use
\eqref{eq:integer-root}. If $\lambda=0$, the conclusion is immediate.
If $\lambda>0$, then
\[
 \rho(\zeta-\Lambda_0+1)
 =\rho\lfloor u_+\rfloor
 \ge\rho\left(\beta-\frac{2\lambda}{\beta}-1\right).
\]
Since $\beta>0$, the final expression is at least $\lambda$ whenever
\[
 \lambda(\beta+2\rho)\le\rho \beta(\beta-1).
\]
Lemma~\ref{lem:cubic-positive}, proved in Appendix~\ref{app:cubic}, gives
\[
 \rho \beta(\beta-1)-(\Psi-\zeta)(\beta+2\rho)>0.
\]
Together with $\lambda\le\Psi-\zeta$ and $\beta+2\rho>0$, this proves the desired
inequality. Hence all three hypotheses hold with $\Lambda=\Lambda_0$.
\end{proof}

\subsection{Completion of the proof in this range}

\begin{theorem}\label{thm:interval}
Assume \IH. For $2k+1\le n\le\lceil5k/2-1\rceil$, every digraph
$D$ of order $n$ with $m(D)>g_k(n)$ contains every $F\in\cF$.
\end{theorem}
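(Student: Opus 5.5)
We argue by contradiction: assume $F\not\subseteq D$ for some $F\in\cF$, and assemble the reductions of this section into a case analysis on the parameters $\varepsilon$ and $\xi$.

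\textbf{Reduction to the constrained regime.} By Corollary~\ref{cor:matching} and Lemma~\ref{lem:no-single-arcs} we may assume~\eqref{eq:mixed-forest}, that is, $\rho\ge1$ and $c_P\ge1$. If $\zeta n\le g_k(n)$, Lemma~\ref{lem:weak-case} already gives $F\subseteq D$, so we may also assume $\zeta n>g_k(n)$. Since $F\not\subseteq D$, Lemma~\ref{lem:degree-consequences} gives~\eqref{eq:degree-consequences}, and Lemma~\ref{lem:residual-parameters} gives $x\le t$ and $t\le\lceil k/2\rceil-1$. Together with $2x\le k-t-2$, $\varepsilon\le x-1$ (which follows from $c_P\ge1$) and $\lambda\ge0$, these yield all of~\eqref{eq:numerical-domain}. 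The quantities $m_0$, $\lambda$ and $\xi$ of~\eqref{eq:minimum-arcs} are then well defined. A matching $M$ of size $k$ exists by Corollary~\ref{cor:matching}, so the setting of Subsection~\ref{ss:extensions} is in force.

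\textbf{Case $\xi>0$.} Proposition~\ref{prop:delete-P} embeds $P$ and then finds $\rho$ disjoint arcs outside it, so $F\subseteq D$, a contradiction. Hence we may assume $\xi\le0$.

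\textbf{Case $\varepsilon=0$.} Lemma~\ref{lem:conditions-zero} verifies~\eqref{eq:two-arc-conditions}, and Proposition~\ref{prop:two-arc-components} gives $F\subseteq D$. This case does not actually need $\xi\le0$.

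\textbf{Case $\varepsilon\ge1$ and $\xi\le0$.} Lemma~\ref{lem:conditions-positive} supplies an integer $\Lambda\in[1,\zeta]$ satisfying~\eqref{eq:extension-i}--\eqref{eq:extension-iii}. Proposition~\ref{prop:larger-components} then yields $F\subseteq D$.

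The three cases exhaust the possibilities, so we reach a contradiction in every case.

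The main obstacle is not in this assembly but in checking that the hypotheses line up exactly. First, Lemma~\ref{lem:R-upper} requires $\lambda\ge0$, which follows from $\zeta n>g_k(n)$ and integrality. Second, the appendix inequalities behind Lemmas~\ref{lem:critical-line} and~\ref{lem:conditions-positive} must cover both parities of $k$, including the boundary order $n=\lceil5k/2-1\rceil$, where $g_k(n)$ may exceed $g_{\rm cl}(k)$. I would check these two points first. Beyond that, the proof is a direct chaining of the cited results.
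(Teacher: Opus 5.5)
Your proposal is correct and follows the paper's proof essentially step for step. You reduce to $\rho\ge1$, $c_P\ge1$ and $\zeta n>g_k(n)$, derive \eqref{eq:numerical-domain} and \eqref{eq:degree-consequences} from $F\not\subseteq D$, and then split into the cases $\xi>0$ (Proposition~\ref{prop:delete-P}), $\varepsilon=0$ (Lemma~\ref{lem:conditions-zero} with Proposition~\ref{prop:two-arc-components}), and $\varepsilon\ge1$ with $\xi\le0$ (Lemma~\ref{lem:conditions-positive} with Proposition~\ref{prop:larger-components}). The two points you flag as needing checks are already handled in the paper: $\lambda\ge0$ is part of \eqref{eq:numerical-domain}, and the odd boundary order is covered by Lemma~\ref{lem:R-upper} together with the real bound $t\le(k-1)/2$ in Appendix~\ref{app:critical-line}.
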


\begin{proof}
If $F=k\dK$, use Corollary~\ref{cor:matching}; if $\rho=0$, use
Lemma~\ref{lem:no-single-arcs}. We may therefore assume
\eqref{eq:mixed-forest}. If $\zeta n\le g_k(n)$, apply
Lemma~\ref{lem:weak-case}.

Suppose $\zeta n>g_k(n)$ and $F\not\subseteq D$.
Lemmas~\ref{lem:residual-parameters} and~\ref{lem:degree-consequences}
give~\eqref{eq:numerical-domain} and~\eqref{eq:degree-consequences}.
If $\xi>0$, Proposition~\ref{prop:delete-P} gives a contradiction.
Otherwise, when $\varepsilon=0$, Lemma~\ref{lem:conditions-zero} and
Proposition~\ref{prop:two-arc-components} give $F\subseteq D$.
When $\varepsilon\ge1$, Lemma~\ref{lem:conditions-positive} and
Proposition~\ref{prop:larger-components} give the same conclusion.
Thus no counterexample exists.
\end{proof}

\begin{remark}
Theorem~\ref{thm:interval} makes no assumption on the minimum degree or
on the existence of a perfect matching. The degree inequalities used
in its proof are consequences of the assumption $F\not\subseteq D$,
not additional hypotheses of the theorem.
\end{remark}

\section{Proof of the main theorem}\label{sec:main-proof}

\subsection{A matching disjoint from a copy of the larger components}

\begin{lemma}\label{lem:large-matching}
Let $F\in\cF$ satisfy $\rho\ge1$ and $c_P\ge1$, and let
$m(D)>g_k(n)$. If $\nu(D)\ge k+c_P$, then $F\subseteq D$.
\end{lemma}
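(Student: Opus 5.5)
The plan is to embed the components of $P$ one at a time, each anchored on its own arc of a large matching, so that a copy of $P$ destroys at most $p+c_P$ arcs of that matching. The $\rho$ single-arc components are then read off the surviving matching arcs. The arithmetic is favourable: a copy of $P$ has $n_P=p+c_P$ vertices, so it meets at most $p+c_P=k-\rho+c_P$ arcs of any matching. Hence a matching of size $k+c_P$ keeps at least $\rho$ arcs outside any copy of $P$. The difficulty is therefore only that the copy of $P$ must be found first, and it must not interact badly with the matching.

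Step 1. Lemma~\ref{lem:no-single-arcs} applies to $P$. Its hypothesis is $m(D)>g_p(n)$, which holds because $g_p(n)\le g_k(n)$ for $p\le k$, as noted in the proof of Proposition~\ref{prop:delete-P}. We may also assume $n\ge2k$ by Lemma~\ref{lem:small-order}.

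So $P\subseteq D$. Let $W$ be the vertex set of one copy of $P$, with $|W|=n_P=p+c_P$.

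Step 2. Fix a matching of arcs $M$ with $|M|\ge k+c_P$. By the notation of Section~\ref{sec:prelim}, $|M(W)|\le|W|=p+c_P$. Therefore
\[
 |M\setminus M(W)|\ge k+c_P-(p+c_P)=k-p=\rho .
\]
These arcs lie in $D-W$ and are pairwise vertex-disjoint. Together with the copy of $P$ they give a copy of $F$, with the orientation of each single-arc component immaterial since $\dK$ has only one orientation.

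I expect essentially no obstacle here. The only subtle point is that $M(W)$ counts arcs of $M$ meeting $W$, and each vertex of $W$ meets at most one arc of $M$, so the crude bound $|M(W)|\le|W|$ suffices; no cleverness about anchoring components on matching arcs is needed. If Lemma~\ref{lem:no-single-arcs} were unavailable, the fallback would be Theorem~\ref{thm:weak} with $v=n_P\le3p/2$, together with the comparison already done in the proof of Lemma~\ref{lem:no-single-arcs}.
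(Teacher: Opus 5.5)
Your proof is correct and matches the paper's argument: you obtain a copy of $P$ from Lemma~\ref{lem:no-single-arcs} via $g_p(n)\le g_k(n)$, and then $|M(W)|\le|W|=p+c_P$ leaves at least $\rho$ arcs of a matching of size $\nu(D)\ge k+c_P$ inside $D-W$. As you note yourself, the component-by-component anchoring described in your opening paragraph is never needed.
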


\begin{proof}
As in Proposition~\ref{prop:delete-P}, $g_p(n)\le g_k(n)$, so
Lemma~\ref{lem:no-single-arcs} gives a copy of $P$ with vertex set $W$.
Choose a maximum matching of arcs $M$ in $D$.
Since $|M(W)|\le|W|=p+c_P$, the digraph $D-W$ contains at least
\[
 |M\setminus M(W)|\ge\nu(D)-(p+c_P)\ge k-p=\rho
\]
vertex-disjoint arcs. These complete the copy of $P$ to a copy of $F$.
\end{proof}

\subsection{The Gallai--Edmonds decomposition}

\begin{lemma}\label{lem:matching-lower-bound}
Let $F\in\cF$ satisfy $\rho\ge1$ and $c_P\ge1$. Suppose that $D$
has order $n$, $m(D)>g_k(n)$, and
\begin{equation}\label{eq:large-order-assumptions}
 \begin{gathered}
 n\ge\left\lceil\frac{5k}{2}\right\rceil,\qquad
 \delta(D)\ge2k-1,\qquad n\le k+2c-2,\\
 d_D^\sigma(v)\le \zeta=k+c-2
 \quad\text{for all }v\in V(D),\ \sigma\in\{+,-\}.
 \end{gathered}
\end{equation}
Then $\nu(D)\ge k+c_P$.
\end{lemma}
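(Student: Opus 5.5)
We argue by contradiction. Suppose $\nu:=\nu(D)\le k+c_P-1$, and put $G=G(D)$. Since $\nu(D)=\nu(G)$, the Tutte--Berge formula, or the Gallai--Edmonds decomposition, gives a set $S\subseteq V(G)$ with $s=|S|$ such that $G-S$ has $q$ odd components and
\[
 q-s=n-2\nu\ge n-2k-2c_P+2 .
\]
We may take $S=A(G)$ from the Gallai--Edmonds decomposition. Then every component of $G-S$ is either factor-critical and odd, or lies in $C(G)$. The aim is to bound $m(D)$ from above in terms of $n,k,c,s$ and show that the bound is at most $g_k(n)$, which contradicts $m(D)>g_k(n)$.

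\emph{Counting.} Split the arcs of $D$ into those with an endpoint in $S$ and those inside components of $G-S$. The first class has at most $\sum_{v\in S}d_D(v)\le2\zeta s$ arcs, using $d_D^\pm(v)\le\zeta$ from~\eqref{eq:large-order-assumptions}. A component $C$ with $a$ vertices spans at most $\min\{a(a-1),\zeta a\}$ arcs, by the same outdegree bound. We also use the hypothesis $\delta(D)\ge2k-1$ to bound the sizes of components. A vertex $v$ in a component $C$ of order $a$ has all its neighbours in $C\cup S$, so
\[
 2k-1\le d_D(v)\le2(a-1)+2s .
\]
Hence $a\ge k-s+1/2$, and so every component of $G-S$ has at least $k-s+1$ vertices. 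Combined with $q\ge n-2k-2c_P+2+s$, this gives
\[
 n\ge s+q(k-s+1).
\]
Since $n\le k+2c-2$ is far below $2(k-s+1)$ when $s$ is small, this should force either $s$ to be large or $q$ to be very small. If $q\le1$, the deficiency equation bounds $n$ directly. We then use $n\ge\lceil5k/2\rceil$ and $n\le k+2c-2$, the latter giving $c\ge(n-k+2)/2$, to relate $c_P$, $c$ and $n$.

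\emph{Case analysis.} First, suppose $s$ is small, say $s\le k/2$. Then the component-size bound with $n\le k+2c-2<3k$ leaves room for at most two components, which contradicts the deficiency $q\ge s+n-2k-2c_P+2\ge s+t-2c_P+2$. For this we must check that $n-2k-2c_P+2\ge3$ in the relevant range, using $c_P\le x$ and the relation $n\le k+2c-2=3k-2x-2$. Second, suppose $s$ is large. Then
\[
 m(D)\le2\zeta s+\zeta(n-s)=\zeta(n+s),
\]
and we use $s\le\nu\le k+c_P-1$ together with $q\ge s+n-2\nu$ and $n\ge s+q$. Together these give $2s\le2\nu\le\ldots$, a bound that should push $m(D)$ below $(k-1)(2n-k)=g_{\rm lin}(k,n)$. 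The latter is the relevant branch of $g_k(n)$, since $n\ge5k/2$. In the intermediate range we maximise
\[
 2\zeta s+\sum_C\min\{a_C(a_C-1),\zeta a_C\}
\]
subject to $\sum_Ca_C=n-s$, $a_C\ge k-s+1$ and at least $q$ odd components. By convexity the maximum is attained with all but one component of minimum size, which gives an explicit quadratic in $s$ to compare with $g_{\rm lin}(k,n)$.

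The main obstacle is this final polynomial comparison. It must use all of $n\ge\lceil5k/2\rceil$, $n\le k+2c-2$ and $\zeta=2k-x-2$ simultaneously, and the bound $\zeta\le2k-3$ is only slightly weaker than the extremal degree $2k-2$. I would first try to reduce to the two endpoint values of $s$, by convexity of the bound in $s$, and then verify each endpoint by completing squares, as in Appendix~\ref{app:ineq}.
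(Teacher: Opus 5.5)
You correctly set up the Gallai--Edmonds decomposition with $S=\mathcal A$, the deficiency $q-s=n-2\nu\ge n-2k-2c_P+2\ge 2n-5k+4\ge4$, and components of order at least $k-s+1$, as the paper does. But the arc count you propose is too weak to reach $g_k(n)$, so the final polynomial comparison cannot succeed.

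Your large-$s$ bound $m(D)\le\zeta(n+s)$ can never work. Already $\zeta n-g_{\rm lin}(k,n)=k(k-1)-xn\ge k^2/4+2k>0$, using $n\le 3k-2x-2$ and $x\le k/4-1$, which follow from the hypotheses.

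The bound $2\zeta s+\sum_C\min\{a_C(a_C-1),\zeta a_C\}$ also fails, at the top of the range of $s$. Take $k=40$ and $x=c_P=3$, so $\varepsilon=0$ and $\zeta=75$. Take $n=100$ and $s=\nu=k+c_P-1=42$, with $G-S$ consisting of $58$ singletons. Every constraint you use holds:
\begin{itemize}
\item $q=58=s+n-2\nu$ and $n=s+q$;
\item $100\le n\le k+2c-2=112$;
\item a singleton may have total degree $2s=84\ge 2k-1$.
\end{itemize}
Yet $2\zeta s=6300>6240=g_{40}(100)$. The loss comes from charging the arcs that enter $S$ to the indegree bound $\zeta$ of the vertices of $S$.

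The missing idea is to count each arc at its tail. A vertex of $\mathcal D$ has all its out-neighbours in its own component of $G[\mathcal D]$, of order at most $d-q+1$, or in $\mathcal A$. Its outdegree is therefore at most $d-q+s$, where $d=|\mathcal D|$. Vertices of $\mathcal A\cup\mathcal C$ contribute at most $\zeta$ each. Put $u=s+|\mathcal C|$ and use $q\ge s+4$ and $q\ge n+s-2k-2c_P+2$. This gives $m(D)\le u\zeta+(n-u)\min\{n-u-4,\,2k+2c_P-2-u\}$. This is a convex quadratic on $k\le u\le n-k-4$, and its endpoint values are at most $g_k(n)-3k$. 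In the example above it gives $42\cdot75+58\cdot42=5586$.

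Your small-$s$ step is also misstated. For $s\ge4$, more than two components of order at least $k-s+1$ can fit. The correct estimate is $(s+4)(k+1-s)\le n-s\le 3k-4-s$, which is impossible for every $0\le s\le k-1$. So in fact $s\ge k$, the component-size bound becomes vacuous, and no intermediate range with large components needs to be optimised.
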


\begin{proof}
Put $G=G(D)$ and $\nu=\nu(D)=\nu(G)$. Suppose, to the contrary, that
$\nu\le k+c_P-1$. Because each neighbour in $G$ contributes at most
two to the total degree in $D$,
\begin{equation}\label{eq:underlying-min-degree}
 \delta(G)\ge\left\lceil\frac{2k-1}{2}\right\rceil=k.
\end{equation}

Let $\mathcal D$ be the set of vertices of $G$ left uncovered by at
least one maximum matching, let
$\mathcal A=N_G(\mathcal D)\setminus\mathcal D$, and put
$\mathcal C=V(G)\setminus(\mathcal A\cup\mathcal D)$.
These sets form the Gallai--Edmonds decomposition. Write
\[
 a=|\mathcal A|,\qquad \gamma=|\mathcal C|,\qquad
 d=|\mathcal D|,
\]
and let $q$ be the number of components of $G[\mathcal D]$, of
orders $n_1,\ldots,n_q$.
The Gallai--Edmonds theorem~\cite[Section~3.2]{LP86} gives
\begin{equation}\label{eq:gallai-edmonds}
 2\nu=n+a-q.
\end{equation}
There is no edge between $\mathcal C$ and $\mathcal D$.
Every vertex in a component of $G[\mathcal D]$ has all its neighbours
in that component or in $\mathcal A$; every vertex in $\mathcal C$
has all its neighbours in $\mathcal C\cup\mathcal A$.

\medskip
\noindent\emph{First, $q\ge a+4$.}
By~\eqref{eq:gallai-edmonds} and the assumed upper bound on $\nu$,
\[
 n+a-q-2k+2\le2c_P.
\]
Since $c_P\le x=k-c$ and $n\le k+2c-2$,
\[
 2c_P\le2(k-c)\le3k-n-2.
\]
Thus $2n\le5k-4-a+q$. As $2n\ge5k$, we obtain $q\ge a+4$.

\medskip
\noindent\emph{Second, $a\ge k$.}
Suppose $a\le k-1$. By~\eqref{eq:underlying-min-degree}, every component
of $G[\mathcal D]$ has order at least $k+1-a$. Hence
\[
 (a+4)(k+1-a)\le\sum_{i=1}^q n_i=d\le n-a\le3k-4-a.
\]
The final inequality uses $c\le k-1$, which follows from $x\ge1$.
However,
\[
 (a+4)(k+1-a)-(3k-4-a)=-a^2+(k-2)a+k+8.
\]
This polynomial is concave on $[0,k-1]$, with positive endpoint values
$k+8$ and $9$. It is therefore positive throughout the interval,
a contradiction.

\medskip
\noindent\emph{Third, the arc count is too small.}
The order bounds $5k/2\le n\le3k-4$ imply $k\ge8$.
Put $u=a+\gamma$; then $d=n-u$.
For each component of $G[\mathcal D]$,
\[
 n_i\le d-q+1,
\]
since every other component contains at least one vertex.
The outdegree bound and the absence of edges between distinct such
components yield
\[
 \begin{split}
 m(D)
 &\le a\zeta+\gamma\min\{u-1,\zeta\}
   +d\min\{d-q+a,\zeta\}\\
 &\le u\zeta+(n-u)\min\{n-u-4,\ 2k+2c_P-2-u\}
 =:\psi(u).
 \end{split}
\]
Here $q\ge a+4$ gives the first bound inside the final minimum, while
$q=n+a-2\nu\ge n+a-2k-2c_P+2$ gives the second.
Moreover, $d\ge q\ge a+4\ge k+4$, so
\[
 k\le u\le u^*:=n-k-4.
\]
The two quantities in the minimum defining $\psi$ differ by a constant
independent of $u$. Thus $\psi$ agrees throughout $[k,u^*]$ with a single
quadratic whose leading coefficient is positive. Consequently,
\begin{equation}\label{eq:convex-endpoints}
 \psi(u)\le\max\{\psi(k),\psi(u^*)\}.
\end{equation}

We compare the endpoints directly. Since
$5k/2\le n\le3k-2x-2$ and $1\le c_P\le x$,
\begin{equation}\label{eq:large-parameter-range}
 1\le c_P\le x\le\frac{k}{4}-1.
\end{equation}
Using the second term of the minimum to bound $\psi(k)$,
\[
 \begin{split}
 g_k(n)-1-\psi(k)
 &\ge n(k-2c_P)+2kc_P+kx-2k^2+k-1\\
 &\ge\frac{k^2}{2}-3kc_P+kx+k-1\\
 &\ge\frac{k^2}{2}-2kx+k-1
 \ge3k-1.
 \end{split}
\]
The second inequality uses $n\ge5k/2$ and $k-2c_P>0$.
For $u^*=n-k-4$, use the first term of the minimum to obtain
\[
 \begin{split}
 g_k(n)-1-\psi(u^*)
 &\ge x(n-k-4)+3k-9\\
 &\ge\frac{9k}{2}-13\ge3k-1,
 \end{split}
\]
where $x\ge1$, $n\ge5k/2$ and $k\ge8$.
Together with~\eqref{eq:convex-endpoints}, these inequalities imply
$m(D)\le\psi(u)\le g_k(n)-3k<g_k(n)$, a contradiction.
\end{proof}

\subsection{Completion of the induction}

\begin{proof}[Proof of Theorem~\ref{thm:main}]
Suppose that the embedding assertion fails. Choose a counterexample
$(D,F)$ first with $k$ minimal, and then with $n$ minimal. This gives
the induction hypothesis \IH\ used above.
By Lemmas~\ref{lem:base} and~\ref{lem:small-order} and
Theorem~\ref{thm:twice-order}, we have $k\ge3$ and $n\ge2k+1$.
Corollary~\ref{cor:matching} and Lemma~\ref{lem:no-single-arcs} give
$\rho\ge1$ and $c_P\ge1$.
Theorem~\ref{thm:interval} excludes
$n\le\lceil5k/2-1\rceil$, so $n\ge\lceil5k/2\rceil$.

In this range, both $g_k(n)$ and $g_k(n-1)$ are given by their linear
expressions, and therefore
\[
 g_k(n)-g_k(n-1)=2k-2.
\]
If $D$ had a vertex of total degree at most $2k-2$,
Lemma~\ref{lem:delete-low-degree} would give
$m(D-v)>g_k(n-1)$, and \IH\ would embed $F$ in $D-v$.
Hence $\delta(D)\ge2k-1$.

Propositions~\ref{prop:large-one-sided} and~\ref{prop:max-degree} imply
\[
 d_D^\sigma(v)\le \zeta\quad\text{for every }v,\sigma,
 \qquad \Delta(D)\ge n+k-2.
\]
Since $\Delta(D)\le2\zeta$, this gives $n\le k+2c-2$.
Thus all hypotheses of Lemma~\ref{lem:matching-lower-bound} hold, and
$\nu(D)\ge k+c_P$. Lemma~\ref{lem:large-matching} now embeds $F$ in
$D$, the final contradiction.

For $n\ge2k-1$, the embedding assertion gives
$\ex_{\rm dir}(n,F)\le g_k(n)=2\ex(n,kK_2)$ for every $F\in\cF$.
Equality for the maximum over $\cF$ follows by taking $F=k\dK$ and
the symmetric digraph on an extremal $kK_2$-free graph.
For $n<2k-1$, no digraph of order $n$ contains $k\dK$, so the maximum
is $n(n-1)=2\ex(n,kK_2)$. This proves the stated extremal identity
for every $n\ge1$.
\end{proof}

\section{Concluding remarks}\label{sec:remarks}

The matching $k\dK$ attains the maximum directed extremal number over
$\cF$. Determining $\ex_{\rm dir}(n,F)$ for a fixed antidirected forest
$F$ remains a separate problem: the uniform bound in
Theorem~\ref{thm:main} need not be attained by every forest.

The proof separates the components consisting of a single arc from
those with at least two arcs. The rooted counting inequality guarantees
a copy of the latter part, while the matching arguments guarantee
vertex-disjoint arcs outside it. In the range treated in
Section~\ref{sec:interval}, the copies are chosen to meet at most $p$
arcs of a fixed matching of size $k$. For larger orders, a maximum
matching of size at least $k+c_P$ suffices for any copy of $P$.

There is no corresponding linear bound for all orientations of forests:
a complete bipartite digraph with all arcs directed from one part to
the other has only antidirected subdigraphs and can have a quadratic
number of arcs. It is natural to ask how additional restrictions on
the arcs of the containing digraph affect the extremal numbers for
antidirected forests.

\appendix
\section{Auxiliary polynomial inequalities}\label{app:ineq}

This appendix supplies the two numerical arguments used in
Section~\ref{ss:numerical}. The notation is
\[
 \begin{gathered}
 \zeta=2k-x-2,\qquad \rho=k-2x+\varepsilon,\qquad
 p=2x-\varepsilon,\qquad n_P=3x-2\varepsilon,\\
 \Psi=\frac{5k}{2}\zeta-(2k-1)(2k-2)-1,\qquad
 \xi=g_k(2k+t)+1-2\zeta n_P+p-g_\rho(2k+t-n_P).
 \end{gathered}
\]

\subsection{Proof of Lemma~\ref{lem:critical-line}}\label{app:critical-line}

Assume $k\ge2x+3$, $x\ge2$, $1\le\varepsilon\le x-1$,
$3x-10\varepsilon\le3$, and the upper bounds on $t$ in
\eqref{eq:numerical-domain}. Put
\[
 t^*=\min\left\{\frac{k-1}{2},\ k-2x-2\right\},\qquad
 n_*=2k+t^*-n_P,\qquad
 \varepsilon_0=\frac{3x-3}{10}.
\]
Then $t\le t^*$ and $\varepsilon\ge\varepsilon_0$.
The expression $g_\rho$ is nondecreasing in its argument, so
\begin{equation}\label{eq:L-lower}
 \xi\ge g_{\rm cl}(k)+1-2\zeta n_P+p-g_\rho(n_*).
\end{equation}
Moreover,
\[
 2n_*-5\rho+2=
 \begin{cases}
 k-\varepsilon-2,& t^*=k-2x-2,\\
 4x-\varepsilon+1,& t^*=(k-1)/2.
 \end{cases}
\]
Both expressions are positive. Thus $g_\rho(n_*)$ is given by its
linear expression $(\rho-1)(2n_*-\rho)$.

If $k\le4x+3$, then $t^*=k-2x-2$, and the right-hand side of
\eqref{eq:L-lower} is
\[
 \xi_1(k,x,\varepsilon)=
 -3\varepsilon^2+10\varepsilon x-2\varepsilon
 -k^2+6kx+3k-10x^2-2x-1.
\]
For fixed $x,\varepsilon$, this is concave in $k$ on $[2x+3,4x+3]$.
For $\varepsilon\le x-1$,
\[
 \frac{\partial \xi_1}{\partial\varepsilon}
 =10x-6\varepsilon-2\ge4x+4>0.
\]
Consequently its value is at least the smaller of the two endpoint
values with $\varepsilon$ replaced by $\varepsilon_0$. These values are
\begin{align*}
 \xi_1(2x+3,x,\varepsilon_0)
 &=\frac{73x^2+694x-67}{100}\ge\frac{1613}{100},\\
 \xi_1(4x+3,x,\varepsilon_0)
 &=\frac{73x^2+94x-67}{100}\ge\frac{413}{100}.
\end{align*}
Both polynomials are increasing for $x\ge2$.

If $k\ge4x+3$, then $t^*=(k-1)/2$, and the right-hand side of
\eqref{eq:L-lower} is
\[
 \xi_2(k,x,\varepsilon)=
 -3\varepsilon^2+\varepsilon k+6\varepsilon x-5\varepsilon
 -k-2x^2+8x+2.
\]
Its coefficient of $k$ is $\varepsilon-1\ge0$. Hence
\[
 \begin{split}
 \xi_2(k,x,\varepsilon)
 &\ge \xi_2(4x+3,x,\varepsilon)
 =\xi_1(4x+3,x,\varepsilon)\\
 &\ge \xi_1(4x+3,x,\varepsilon_0)\ge\frac{413}{100}>0.
 \end{split}
\]
This proves $\xi>0$ in both cases and establishes
Lemma~\ref{lem:critical-line}.

\subsection{A cubic inequality}\label{app:cubic}

\begin{lemma}\label{lem:cubic-positive}
Let $k,x,\varepsilon$ be integers satisfying
\[
 k\ge2x+3,\qquad \varepsilon\ge1,\qquad 3x-10\varepsilon\ge4.
\]
With $\beta=\zeta-4\varepsilon$, define
\[
 m_3(k,x,\varepsilon)=\rho \beta(\beta-1)-(\Psi-\zeta)(\beta+2\rho).
\]
Then $m_3(k,x,\varepsilon)>0$.
\end{lemma}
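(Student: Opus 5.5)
Write everything in terms of $k$, $x$ and $\varepsilon$. We have
\[
\zeta=2k-x-2,\qquad \rho=k-2x+\varepsilon,\qquad \beta=2k-x-2-4\varepsilon,
\qquad \Psi-\zeta=k^2-\tfrac52kx-k+x-1
\]
(Lemma~\ref{lem:R-upper}). Then $m_3$ is an explicit cubic polynomial. Its leading behaviour in $k$ is
\[
\rho\beta(\beta-1)\approx 4k^3,\qquad (\Psi-\zeta)(\beta+2\rho)\approx k^2\cdot4k=4k^3,
\]
so the $k^3$ terms cancel and $m_3$ is effectively quadratic in $k$ with coefficients depending on $x,\varepsilon$. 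The plan is to expand $m_3$ completely and regroup it as a polynomial in $k$. We then check positivity on the domain $k\ge2x+3$, $1\le\varepsilon\le(3x-4)/10$ (so $x\ge5$). As in the proof of Lemma~\ref{lem:conditions-positive}, these constraints also give $k\ge13$.

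Concretely, we first compute
\[
\beta+2\rho=4k-5x-2-2\varepsilon
\]
and
\[
\rho\beta(\beta-1)=(k-2x+\varepsilon)(2k-x-2-4\varepsilon)(2k-x-3-4\varepsilon).
\]
Subtracting $(k^2-\tfrac52kx-k+x-1)(4k-5x-2-2\varepsilon)$, the $k^3$ terms cancel. For the $k^2$ coefficient the two products give
\[
[-8x-16\varepsilon-5+4\varepsilon] \quad\text{and}\quad [-10x-4-5x-2-2\varepsilon],
\]
so the $k^2$ coefficient of $m_3$ is roughly $7x-10\varepsilon+\dots$. This is positive precisely because $3x-10\varepsilon\ge4$ forces $10\varepsilon<3x$, and this is where the hypothesis of Lemma~\ref{lem:critical-line} is used.

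Next, write $m_3=A k^2+Bk+C$ with $A,B,C$ polynomials in $(x,\varepsilon)$ and $A>0$. Substitute $k=2x+3+s$ with $s\ge0$ to obtain a quadratic in $s$. We then show that the constant term $m_3(2x+3,x,\varepsilon)$ and the linear coefficient are positive, and that $A>0$. Each of these is a polynomial in $x,\varepsilon$ alone. To control it, we use monotonicity in $\varepsilon$ on $[1,(3x-4)/10]$ (concavity or sign of the derivative) and reduce to the endpoints $\varepsilon=1$ and $\varepsilon=(3x-4)/10$. At those endpoints the result is a one-variable polynomial in $x\ge5$, whose positivity follows by checking $x=5$ and monotonicity, as in Appendix~\ref{app:critical-line}.

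The main obstacle is the linear coefficient in $s$ (equivalently $B$ combined with $A$). It may be negative for some parameter values, in which case termwise positivity fails. If so, the fallback is to bound the quadratic in $s$ from below by completing the square, showing that its discriminant is negative, or that its vertex lies at $s<0$. Alternatively, we can use the coarser substitution $\varepsilon\le(3x-4)/10$ to absorb negative terms, in the style of the displayed identities~\eqref{eq:zero-square} and~\eqref{eq:Gamma}.
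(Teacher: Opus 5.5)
Your route is essentially the paper's. The $k^3$ terms cancel, the hypothesis $3x-10\varepsilon\ge4$ controls the $k^2$-coefficient, you show $m_3$ grows from $k=2x+3$ onwards via a positive slope there, and you then check that $m_3(2x+3,x,\varepsilon)>0$. The paper uses the forward difference $m_3(k+1)-m_3(k)$ where you use the derivative in $s$.

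There is, however, an arithmetic slip that makes one of your stated goals false. The $k^2$-coefficient of $\rho\beta(\beta-1)$ is $-12x-12\varepsilon-10$, not $-8x-12\varepsilon-5$. Hence the $k^2$-coefficient of $m_3$ is exactly $A=3x-10\varepsilon-4$, which the paper calls $\alpha$. This is only nonnegative: it vanishes, for instance, at $(x,\varepsilon)=(8,2)$, where $m_3$ is linear in $k$. So you cannot prove $A>0$; you should prove $A\ge0$, which is all the argument needs.

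The step you flagged as the main obstacle is in fact harmless. The $k$-coefficient is $B=-\tfrac72x^2+31\varepsilon x+11x+8\varepsilon+8$, and the linear coefficient in $s$ satisfies
\[
 2A(2x+3)+B=\tfrac{17}{2}x^2+13x-16-\varepsilon(9x+52)\ge\tfrac{1}{10}\bigl(58x^2+10x+48\bigr)>0
\]
using $\varepsilon\le(3x-4)/10$. No discriminant or completing-the-square fallback is required.

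For the constant term, the paper avoids endpoint analysis through the identity
\[
 m_3(2x+3,x,\varepsilon)=(\varepsilon+3)(3x+4-4\varepsilon)(3x+3-4\varepsilon)+\bigl(x^2-\tfrac72x-5\bigr)(3x+10-2\varepsilon).
\]
Every factor here is positive for $x\ge5$ and $\varepsilon\le(3x-4)/10$. Your reduction to the endpoints $\varepsilon=1$ and $\varepsilon=(3x-4)/10$ would also work, but it must rest on concavity in $\varepsilon$, not monotonicity. The second $\varepsilon$-derivative is $96\varepsilon-48x+40<0$ on that range, whereas the expression increases and then decreases in $\varepsilon$ for larger $x$.
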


\begin{proof}
Set $\alpha=3x-10\varepsilon-4\ge0$. Direct subtraction gives
\[
 \begin{split}
 m_3(k+1,x,\varepsilon)-m_3(k,x,\varepsilon)
 &=\alpha(2k+1)-\frac72x^2+31\varepsilon x\\
 &\quad+11x+8\varepsilon+8.
 \end{split}
\]
This difference is nondecreasing in $k$. At $k=2x+3$ it equals
\[
 \begin{split}
 \frac{17}{2}x^2+16x-20-\varepsilon(9x+62)
 &\ge\frac{29x^2+5x+24}{5}>0,
 \end{split}
\]
where we used $\varepsilon\le(3x-4)/10$.
Therefore $m_3(k,x,\varepsilon)\ge m_3(2x+3,x,\varepsilon)$.
At this least value of $k$, expansion and factorisation give
\[
 \begin{split}
 m_3(2x+3,x,\varepsilon)
 &=(\varepsilon+3)(3x+4-4\varepsilon)(3x+3-4\varepsilon)\\
 &\quad+\left(x^2-\frac72x-5\right)(3x+10-2\varepsilon).
 \end{split}
\]
All factors are positive. Indeed, the hypotheses imply $x\ge5$, and
\[
 \begin{gathered}
 3x+3-4\varepsilon\ge\frac{9x+23}{5}>0,\qquad
 3x+10-2\varepsilon\ge\frac{12x+54}{5}>0,\\
 x^2-\frac72x-5\ge\frac52>0.
 \end{gathered}
\]
This proves the lemma.
\end{proof}

\section*{Declaration on AI-assisted preparation}
The DeepSeek-Flash model assisted with the technical details and the preparation of the manuscript for this paper.
The authors assume responsibility for all content.

\end{document}